\numberwithin{equation}{section}
\newtheorem{thm}{Theorem}[section]
\newtheorem{prop}[thm]{Proposition}
\newtheorem{lem}[thm]{Lemma}
\newtheorem{cor}[thm]{Corollary}
\theoremstyle{definition}
\theoremstyle{remark}
\newtheorem{rmk}[thm]{Remark}
\newtheorem{ex}[thm]{Example}
\newcommand{\C}{\mathbb{C}}
\newcommand{\N}{\mathbb{N}}
\newcommand{\R}{\mathbb{R}}
\newcommand{\T}{\mathbb{T}}
\newcommand{\Z}{\mathbb{Z}}
\def\LL{\mathcal{L}}
\def\KK{\mathcal{K}}
\def\HH{\mathcal{H}}
\newcommand{\Inf}{\operatorname{Inf}}
\newcommand{\id}{\operatorname{id}}
\newcommand{\Aut}{\operatorname{Aut}}
\newcommand{\lsp}{\operatorname{span}}
\newcommand{\clsp}{\overline{\lsp}}
\newcommand{\KP}{{\operatorname{KP}}}
\title[Deformations of Fell bundles]{\boldmath{Deformations of Fell bundles and\\
twisted graph algebras}}
\author[Iain Raeburn]{Iain Raeburn}
\address{Iain Raeburn\\ Department of Mathematics and Statistics\\University of Otago\\PO Box 56\\Dunedin 9054\\New Zealand}
\email{iraeburn@maths.otago.ac.nz}
\date{16 November 2013; revised 14 March 2016}
\subjclass[2010]{46L30, 46L55}
\thanks{This research has been supported by the Marsden Fund of the Royal Society of New Zealand.}
\begin{document}

\begin{abstract}
We consider Fell bundles over discrete groups, and the $C^*$-algebra which is universal for representations of the bundle. We define deformations of Fell bundles, which are new Fell bundles with the same underlying Banach bundle but with the multiplication deformed by a two-cocycle on the group. Every graph algebra can be viewed as the $C^*$-algebra of a Fell bundle, and there are are many cocycles of interest with which to deform them. We thus obtain many of the twisted graph algebras of Kumjian, Pask and Sims. We demonstate the utility of our approach to these twisted graph algebras by proving that the deformations associated to different cocycles can be assembled as the fibres of a $C^*$-bundle.
\end{abstract}

\maketitle

\section{Introduction}

Higher-rank graphs (or \emph{$k$-graphs}) are higher-dimensional analogues of directed graphs. They were invented by Kumjian and Pask \cite{KP} as combinatorial models for the higher-rank Cuntz--Krieger algebras of \cite{RSteg}. There has since been considerable interest in the $C^*$-algebras of $k$-graphs, and the class of $k$-graph algebras contains many interesting $C^*$-algebras (see \cite{PRRS, DY, BSV}, for example).

Directed graphs, which are essentially the same as $1$-graphs, can be visualised as a $1$-dimensional simplicial complex, and then the theory of covering spaces has a useful and elegant combinatorial version (see \cite{DPR}, for example). Higher-rank graphs can also be realised topologically, and the theory of covering spaces carries over in a very satisfactory way \cite{PQR, KKQS}. More recently, Kumjian, Pask and Sims have considered homology and cohomology for $k$-graphs, and have studied a family of twisted graph algebras in which the usual relations are deformed by a $2$-cocycle \cite{KPS1, KPS2, KPS3}. 

Here we show that graph algebras can be viewed as the universal $C^*$-algebras of Fell bundles over discrete groups. We then deform the multiplication on these bundles using the $2$-cocycles arising in group cohomology, and view the $C^*$-algebras of the deformed bundles as twisted graph algebras. This construction is not quite as general as those in \cite{KPS1, KPS2}, but it is general enough to cover all the main examples, and it seems more accessible.

We think of Fell bundles over a group $G$ as $C^*$-algebraic analogues of $G$-gradings (and we discuss the similarities and differences in Appendix~\ref{app:grade}). So it is no surprise that every $k$-graph algebra is given by a Fell bundle over $\Z^k$ in which the $0$-graded component is the core. But there are many other Fell bundles around: for example, every $k$-graph algebra is also given by a Fell bundle over the fundamental group of the $k$-graph. We can get this last Fell bundle by stringing together established results: the $k$-graph has a universal convering graph that can be realised as a skew product of the graph with its fundamental group \cite{KP2, PQR}; the $C^*$-algebra of the skew product can be viewed as the crossed product by a coaction of the fundamental group on the original graph algebra \cite{KQR, PQR}; and a theorem of Quigg \cite{Q} says that this coaction is associated to a Fell bundle structure for the graph algebra. But having done this, we find it is really quite easy to go directly to the Fell bundle, and this is what we do.

We begin by recalling the definitions of Fell bundles and their $C^*$-algebras. Then we review the group-theoretic cocycles of interest to us, and show how to deform Fell bundles using them. In \S\ref{sec:galgs} we apply this construction to bundles arising from graph algebras, and prove our first main theorem, which says that the $C^*$-algebras of the deformed bundles are generated by families of partial isometries satisfying a twisted version of the usual Cuntz--Krieger relations of \cite{KP}. In \S\ref{sec:unique} we prove versions of the uniqueness theorems for these twisted graph algebras. We also prove that the deformed Fell bundles are all amenable in the sense of Exel \cite{E}, and discuss criteria for simplicity. Then in \S\ref{sec:field} we prove that the $C^*$-algebras of the different deformations of a fixed Fell bundle over $G$ can be assembled as the fibres of a continuous $C^*$-bundle over the second cohomology group of $G$.

We close with three appendices which describe different aspects of the approach we have taken. In the first, we discuss the theorem of Quigg which says that Fell bundles over a discrete group are essentially the same thing as coactions of the group on $C^*$-algebras, and give a short and relatively elementary proof of his theorem. Next, we expand on the relationship between Fell bundles and gradings. We discuss how the relationship between an element of the $C^*$-algebra and its homogeneous components resembles that between a continuous function and its Fourier series, and use the analogy to prove a $C^*$-algebraic version of Fej\'er's theorem on the accelerated converence of Fourier series. In the third appendix, we use the ideas of the previous one to give a direct proof that Leavitt path algebras and Kumjian--Pask algebras are graded.

\subsection*{Acknowledgements}

My interest in these ideas arises from an invitation by Aidan Sims and Roozbeh Hazrat to speak at an AMSI workshop at the University of Western Sydney in February 2013. They asked me to explain, to a mixed audience of algebraists and analysts, how coactions of groups arose in the context of graph algebras.  I thank them for providing the stimulus for me to revisit this topic, though in retrospect I seem to have strayed off subject. (The parts of the subject relevant to their question are discussed in the appendices.) I also thank the participants in our seminar at Otago, and especially Lisa Clark, for their feedback and continued interest while I was straying.

\section{Fell bundles and their $C^*$-algebras}

Since the basic definitions are spread over the first 900 pages of \cite{FD}, and since the definitions simplify for the Fell bundles over discrete groups of interest to us here, we begin by giving a detailed definition.

A \emph{Banach bundle} over a (discrete) set $X$ is a set $B$ and a function $p:B\to X$ such that each $B_x:=p^{-1}(x)$ has the structure of a complex Banach space; we call $p$ the \emph{projection} for the bundle. We suppose that $p:B\to G$ is a Banach bundle over a group $G$, and that $B$ carries an associative multiplication and an involution satisfying
\begin{enumerate}
\item\label{FBa} $p(bc)=p(b)p(c)$ for all $b,c\in B$;
\item\label{FBb} for every $g,h\in G$ the map $(b,c)\mapsto bc$ is bilinear from $B_g\times B_h$ to $B_{gh}$, and satisfies $\|bc\|\leq \|b\|\,\|c\|$ for $b\in B_g$, $c\in B_h$;
\item\label{FBc} for every $g\in G$ the map $b\mapsto b^*$ is conjugate linear from $B_g$ to $B_{g^{-1}}$, and satisfies $(bc)^*=c^*b^*$ and $\|b^*b\|=\|b\|^2$.
\end{enumerate}
Then $B_e$ is a $C^*$-algebra. If in addition
\begin{enumerate}\setcounter{enumi}{3}
\item\label{FBd} $b^*b\geq 0$ in $B_e$ for all $b\in B$,
\end{enumerate}
then $p:B\to G$ is a \emph{Fell bundle} over $G$. (Fell bundles were called \emph{$C^*$-algebraic bundles} in \cite{FD} and \cite{Q}.) We write $\Gamma_c(B)$ for the vector-space direct sum $\bigoplus_{g\in G}B_g$. Provided we remember that adding and deleting zero elements $0_g$ does not change sums, we can view elements of $\Gamma_c(B)$ as finite formal sums $\sum_{g\in F} b_g$. Then extending the multiplication bilinearly and and the $*$-operation conjugate-linearly give a multiplication and involution on $\Gamma_c(B)$, which thus becomes a $*$-algebra.

A \emph{representation} $\pi$ of a Fell bundle $p:B\to G$ in a $C^*$-algebra $C$ consists of linear maps $\pi_g:B_g\to C$ such that $\pi_{gh}(bc)=\pi_g(b)\pi_h(c)$, $\pi_g(b)^*=\pi_{g^{-1}}(b^*)$, and $\pi_e$ is a nondegenerate representation of $B_e$. The $C^*$-algebra $C^*(B)$ is generated by a universal representation of $p:B\to G$, which we will denote by $\pi$. The maps $\pi_g$ combine to give a $*$-homomorphism $\sum_{g} b_g\mapsto \sum_{g}\pi_g(b_g)$ of $\Gamma_c(B)$ into $C^*(B)$, and its range is a dense $C^*$-subalgebra of $C^*(B)$.

We can view each $B_g$ as a right Hilbert module over $B_e$, and Exel proved that $B$ has a left regular representation  by adjointable operators on the Hilbert-module direct sum $\ell^2(B):=\bigoplus_{g\in G}B_g$ \cite[\S2]{E}. The \emph{reduced $C^*$-algebra} $C_r^*(B)$ is the $C^*$-subalgebra of $\LL(\ell^2(B))$ generated by the range of this representation. The action of $B_e$ on the summand $B_e$ in $\ell^2(B)$ is just the action by left multiplication. Thus the left regular representation is isometric on $B_e$, and in view of the relation $\|b^*b\|=\|b\|^2$ in \eqref{FBc}, it is isometric on each $B_g$. Thus $B$ embeds isometrically in both $C^*_r(B)$ and $C^*(B)$. When the left regular representation is faithful on $C^*(B)$, Exel says that $B$ is \emph{amenable}.

\section{Cocycles and deformations of Fell bundles}

Suppose that $G$ is a group and $Z$ is an abelian group. A \emph{$2$-cocycle} $\sigma$ on $G$ with coefficients in $Z$ is a function $\sigma:G\times G\to Z$ such that $\sigma(e,g)=\sigma(g,e)=1$ for all $g$ and 
\begin{equation}\label{cocycleid}
\sigma(g,h)\sigma(gh,k)=\sigma(g,hk)\sigma(h,k)\text{ for all $g,h,k\in G$.}
\end{equation}
The set $Z^2(G,Z)$ of all such cocycles is a group under pointwise operations from $Z$. Two cocycles $\sigma,\tau\in Z^2(G,Z)$ are \emph{equivalent} if there is a function $b:G\to Z$ such that $\sigma(g,h)=b(g)b(h)b(gh)^{-1}\tau(g,h)$ for all $g,h$. The cocycles which are equivalent to the trivial cocycle $\sigma\equiv 1$ form a subgroup of $Z^2(G,Z)$, and the quotient $H^2(G,Z)$ is called the \emph{$2$nd cohomology group}. When $Z=\T$, $Z^2(G,\T)$ is compact in the topology of pointwise convergence, and $H^2(G,\T)$ is a compact Hausdorff topological group.

The following are our key examples.

\begin{ex}\label{cocyclesonZ}
For each strictly upper triangular $k\times k$ matrix $A$ with entries in $[0,1)$, there is a $2$-cocycle $\sigma_A\in Z^2(\Z^k,\T)$ given by $\sigma_A(m,n)=\exp(2\pi i m^tAn)$, where $m^t$ denotes the transpose of the column vector $m$. Two such cocycles $\sigma_A$, $\sigma_B$ are equivalent if and only if $A=B$, and every cocycle in $Z^2(\Z^k,\T)$ is equivalent to one of these \cite{B}. The map $A\mapsto \sigma_A$ satisfies $\sigma_A\sigma_B=\sigma_{A+B}$, and induces an isomorphism of the torus $\T^{k(k-1)/2}$ onto $H^2(\Z^k,\T)$. Similarly, integer matrices $A$ also give cocycles $\tau_A\in Z^2(\Z^k,\Z)$ such that $\tau_A(m,n)=m^tAn$.
\end{ex}

Cocycles and the cohomology group $H^2(G,Z)$ are important in group theory because they give the central extensions of $G$ by $Z$: groups $H$ such that the centre $Z(H)$ contains a copy of $Z$ with $H/Z$ isomorphic to $G$. Given $\sigma\in Z^2(G,Z)$, a central extension is the group $H_\sigma$ with underlying set $Z\times G$ and product given by
\[
(z_1,g_1)(z_2,g_2)=(z_1 z_2\sigma(g_1,g_2),g_1g_2).
\]
Two such extensions $H_\sigma$ and $H_\tau$ are isomorphic (as extensions) if and only if $\sigma$ is equivalent to $\tau$. Conversely, given a central extension $H$ of $G$ by $Z$, we can take $c:G\to H$ to be any section for the quotient map $H\to G$ such that $c(e_G)=e_H$, and then 
\begin{equation}\label{cvssigma}\sigma(g_1,g_2)=c(g_1)c(g_2)c(g_1g_2)^{-1}
\end{equation}
defines a $2$-cocycle.

\begin{ex}
One might prefer to use a different realisation of the central extension: for example, for $\sigma\in Z^2(\Z^2,\Z)$ defined by $\sigma(m,n)=m_1n_2$, we would use the integer Heisenberg group $H(\Z)$ of upper triangular matrices
\[
\begin{pmatrix}
1&m_1&p\\0&1&m_2\\0&0&1
\end{pmatrix}.
\]
The cocycles on  $H(\Z)$ itself were computed in \cite{Pa} (see also \cite[Examples~1.4]{PR}).
\end{ex}

We now describe how to deform a Fell bundle.

\begin{prop}
Suppose that $p:B\to G$ is a Fell bundle over a discrete group $G$, and that $\sigma\in Z^2(G,\T)$ is a cocycle.  Then there is a Fell bundle $p:B(\sigma)\to G$ with the same underlying set $B$, the same projection $p$, the same Banach space structures on the fibres, and the other operations given in terms of those in $B$ by
\[
b\cdot_\sigma c=\sigma(g,h)bc\text{ and }b^{*_\sigma}=\overline{\sigma(g,g^{-1})}b^*\text{ for $b\in B_g$ and $c\in B_h$.}
\]
\end{prop}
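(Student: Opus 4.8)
The plan is to verify the Fell-bundle axioms \eqref{FBa}--\eqref{FBd} for $B(\sigma)$ directly. Since $B(\sigma)$ has the same underlying set, the same projection, and the same fibrewise Banach-space structure as $B$, it is automatically a Banach bundle over $G$, so only the algebraic conditions need checking. Axiom \eqref{FBa} is immediate because $b\cdot_\sigma c$ is a scalar multiple of $bc$ and hence lies in $B_{gh}$. For \eqref{FBb} and the linearity parts of \eqref{FBc}: multiplying the multiplication and the involution of $B$ by the fixed scalars $\sigma(g,h)$ and $\overline{\sigma(g,g^{-1})}$ preserves bilinearity on $B_g\times B_h$ and conjugate-linearity on $B_g$, and since these scalars lie in $\T$ we have $\|b\cdot_\sigma c\|=\|bc\|\le\|b\|\,\|c\|$, so the norm estimate survives as well.

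The two substantive identities are associativity of $\cdot_\sigma$ and the relation $(b\cdot_\sigma c)^{*_\sigma}=c^{*_\sigma}\cdot_\sigma b^{*_\sigma}$; each reduces, using associativity of $B$ and $(bc)^*=c^*b^*$, to a scalar identity in $\T$. For associativity, a direct computation with $b\in B_g$, $c\in B_h$, $d\in B_k$ gives $(b\cdot_\sigma c)\cdot_\sigma d=\sigma(g,h)\sigma(gh,k)\,(bc)d$ and $b\cdot_\sigma(c\cdot_\sigma d)=\sigma(g,hk)\sigma(h,k)\,b(cd)$, and these agree by \eqref{cocycleid}. For the $*$-identity I would first record, as a small lemma on $\T$-valued cocycles, the two consequences of \eqref{cocycleid} and the normalisation $\sigma(e,g)=\sigma(g,e)=1$ that are needed: applying \eqref{cocycleid} to $(g,g^{-1},g)$ gives $\sigma(g,g^{-1})=\sigma(g^{-1},g)$, and applying it to $(g,h,h^{-1}g^{-1})$ and to $(h,h^{-1},g^{-1})$ and combining gives
\[
\sigma(g,g^{-1})\,\sigma(h,h^{-1})=\sigma(gh,h^{-1}g^{-1})\,\sigma(g,h)\,\sigma(h^{-1},g^{-1}).
\]
Expanding both sides of $(b\cdot_\sigma c)^{*_\sigma}=c^{*_\sigma}\cdot_\sigma b^{*_\sigma}$ in terms of the operations of $B$, the desired equality is exactly this last identity, and $(b^{*_\sigma})^{*_\sigma}=b$ follows from the first one.

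Finally, to see that $B_e(\sigma)$ is a $C^*$-algebra and that \eqref{FBd} and the $C^*$-norm identity in \eqref{FBc} hold, the key observation is that the deformation is trivial on the unit fibre: since $\sigma(e,e)=1$, the operations $\cdot_\sigma$ and $*_\sigma$ restrict on $B_e$ to the original ones, so $B_e(\sigma)=B_e$ as a $C^*$-algebra. Then for $b\in B_g$ we compute $b^{*_\sigma}\cdot_\sigma b=\sigma(g^{-1},g)\,\overline{\sigma(g,g^{-1})}\,b^*b=b^*b$, using $\sigma(g,g^{-1})=\sigma(g^{-1},g)$; this is positive in $B_e$ by \eqref{FBd} for $B$, which gives \eqref{FBd} for $B(\sigma)$, and it has norm $\|b^*b\|=\|b\|^2$, completing \eqref{FBc}.

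The only step that is more than bookkeeping with scalars of modulus one is the $*$-identity, and there all the work lies in isolating and proving the right combination of instances of \eqref{cocycleid}; I expect that small cocycle lemma to be the main obstacle, and would prove it before turning to the bundle itself.
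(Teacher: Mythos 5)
Your proposal is correct and follows essentially the same route as the paper: direct verification of the Fell-bundle axioms, with associativity and the involution identities reduced to instances of the cocycle identity \eqref{cocycleid}, the key normalisation $\sigma(g,g^{-1})=\sigma(g^{-1},g)$ obtained from the triple $(g,g^{-1},g)$, and positivity via $b^{*_\sigma}\cdot_\sigma b=b^*b$. The only difference is one of explicitness: the paper dismisses the anti-multiplicativity of $*_\sigma$ as ``the same calculations needed to show that the twisted group algebra $C_c(G,\sigma)$ is a $*$-algebra,'' whereas you correctly isolate and derive the precise scalar identity $\sigma(g,g^{-1})\sigma(h,h^{-1})=\sigma(gh,h^{-1}g^{-1})\sigma(g,h)\sigma(h^{-1},g^{-1})$ that it requires.
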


\begin{proof}
The cocycle identity implies that $\cdot_\sigma$ is an associative multiplication. To see that $b\mapsto b^{*_\sigma}$ is an involution, we use the identity $\sigma(g,g^{-1})=\sigma(g^{-1},g)$, which follows from the cocycle identity for the triple $g,g^{-1},g$. Indeed, for $b\in B_g$ we have $b^*\in B_{g^{-1}}$, and hence
\begin{align*}
(b^{*_\sigma})^{*_\sigma}&=\big(\overline{\sigma(g,g^{-1})}b^*\big)^{*_\sigma}=\overline{\sigma(g^{-1},g)}\big(\overline{\sigma(g,g^{-1})}b^*\big)^*\\
&=\overline{\sigma(g^{-1},g)}\sigma(g,g^{-1})(b^*)^*=b.
\end{align*}
Similar computations using the cocycle identity show that $\cdot_\sigma$ and ${}^{*_\sigma}$ (indeed, the same calculations needed to show that the twisted group algebra $C_c(G,\sigma)$ is a $*$-algebra) show that $\cdot_\sigma$ and ${}^{*_\sigma}$  have the other required algebraic properties. Finally, for $b\in B_g$,
\[
b^{*_\sigma}\cdot_\sigma b=\overline{\sigma(g,g^{-1})}b^*\cdot_\sigma b=\overline{\sigma(g,g^{-1})}\sigma(g^{-1},g)b^*b=b^*b
\]
is positive in the $C^*$-algebra $B(\sigma)_e=B_e$.
\end{proof}

\begin{rmk}
Group algebras and crossed products in which the multiplication is twisted by a $2$-cocycle have been studied since the early days of crossed products \cite{Ze}. More recently, deformations of algebras carrying coactions of groups have been studied by Yamashita \cite{Y} and Bhowmick--Neshveyev--Sangha \cite{BNS}. Yamashita's deformations are of $C^*$-algebras carrying coactions of a discrete group, and hence by Quigg's theorem (as in \cite{Q} or Appendix~A) are equivalent to the $C^*$-algebras of our deformed Fell bundles --- indeed, Fell bundles themselves play a minor role in \cite[\S3]{Y}.  Bhowmick, Neshveyev and Sangha consider coactions of locally compact groups and deformations involving Borel cocyles; the examples in \cite[Examples 2.3]{LPRS} show that Fell-bundle techniques do not suffice in such situations. Both \cite{Y} and \cite{BNS} focus on applications to the Baum--Connes conjecture, and there seems to be relatively little overlap with the present paper.
\end{rmk}

\section{Deformations of graph algebras}\label{sec:galgs}

We now consider a graph $\Lambda$ of rank $k$, which we always assume is row-finite and has no sources. We use the standard coventions and notation of the subject. In particular for $\lambda,\mu\in \Lambda$, we write
\[
\Lambda^{\min}(\lambda,\mu)=\big\{(\alpha,\beta):\lambda\alpha=\mu\beta\text{ and }d(\lambda\alpha)=d(\lambda)\vee d(\mu)\big\},
\]
and $v\Lambda^n$ denotes the set of $\lambda\in \Lambda$ with $r(\lambda)=v$ (so $\lambda =v\lambda$) and $d(\lambda)=n$.

Our next result says that the graph algebra $C^*(\Lambda)=C^*(\{s_\lambda:\lambda\in \Lambda\})$ can be realised as the $C^*$-algebra of a Fell bundle in different ways. An important example of a functor to which the result applies is the degree functor $d:\Lambda\to\Z^k$. We will show at the end of the section that there are many others to which it applies, and our result is interesting when $k=1$.

\begin{prop}\label{labeltoFell}
Suppose that $\Lambda$ is a graph of rank $k$, and $\eta:\Lambda\to G$ is a functor from $\Lambda$ into a group $G$. For $g\in G$, we set
\begin{equation}\label{defBg}
B_g:=\clsp\{s_\lambda s_\mu^*: \eta(\lambda)\eta(\mu)^{-1}=g\},
\end{equation}
$B:=\bigsqcup_{g\in G}B_g$, and define $p:B\to G$ by $p(b)=g$ for  $b\in B_g$. Then with all the operations inherited from $C^*(\Lambda)$, $p:B\to G$ is a Fell bundle. The inclusion maps $\iota_g:B_g\to C^*(\Lambda)$ form a representation of $B$, and the corresponding homomorphism $\iota$ is an isomorphism of $C^*(B)$ onto $C^*(\Lambda)$.
\end{prop}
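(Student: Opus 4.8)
The plan is to check the Fell bundle axioms for $p\colon B\to G$ directly, observe that the inclusions $\iota_g$ assemble into a representation of $B$, and then prove that $\iota$ is an isomorphism by writing down an inverse using the universal property of $C^*(\Lambda)$. First I would verify that $p\colon B\to G$ is a Fell bundle. Each $B_g$ is a closed subspace of the $C^*$-algebra $C^*(\Lambda)$, hence a Banach space, and $B_e=\clsp\{s_\lambda s_\mu^*:\eta(\lambda)=\eta(\mu)\}$ is a $C^*$-subalgebra. The only point that uses the graph structure is that multiplication and involution respect the grading. On spanning elements $(s_\lambda s_\mu^*)^*=s_\mu s_\lambda^*$, and since $\eta(\mu)\eta(\lambda)^{-1}=g^{-1}$ this lies in $B_{g^{-1}}$; and, using the Cuntz--Krieger relation,
\[
(s_\lambda s_\mu^*)(s_\alpha s_\beta^*)=\sum_{(\rho,\tau)\in \Lambda^{\min}(\mu,\alpha)}s_{\lambda\rho}s_{\beta\tau}^*,
\]
a finite sum because $\Lambda$ is row-finite. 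For each term $\mu\rho=\alpha\tau$, so $\eta(\mu)\eta(\rho)=\eta(\alpha)\eta(\tau)$, whence $\eta(\lambda\rho)\eta(\beta\tau)^{-1}=\eta(\lambda)\eta(\mu)^{-1}\eta(\alpha)\eta(\beta)^{-1}=gh$, and the product lies in $B_{gh}$. Extending bilinearly and using continuity of multiplication and of the involution in $C^*(\Lambda)$ gives $B_g\cdot B_h\subseteq B_{gh}$ and $B_g^*=B_{g^{-1}}$. Now \eqref{FBa} is clear, \eqref{FBb} and \eqref{FBc} are inherited verbatim from $C^*(\Lambda)$, and \eqref{FBd} holds because $b^*b$ is positive in $C^*(\Lambda)$ and $B_e$ is a $C^*$-subalgebra, so $b^*b\geq 0$ in $B_e$.

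Next, the inclusions $\iota_g\colon B_g\hookrightarrow C^*(\Lambda)$ satisfy $\iota_{gh}(bc)=\iota_g(b)\iota_h(c)$ and $\iota_g(b)^*=\iota_{g^{-1}}(b^*)$ simply because the bundle operations are the restrictions of those in $C^*(\Lambda)$, and $\iota_e$ is nondegenerate since the vertex projections $s_v$ belong to $B_e$ and act as local units on the spanning elements $s_\lambda s_\mu^*$, which are dense in $C^*(\Lambda)$. So $\iota$ is a representation of $B$, and induces $\iota\colon C^*(B)\to C^*(\Lambda)$; its image is a $C^*$-subalgebra containing every $s_\lambda=s_\lambda s_{s(\lambda)}^*$, hence equals $C^*(\Lambda)$, so $\iota$ is surjective.

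For injectivity — the real content — I would construct a homomorphism $\phi\colon C^*(\Lambda)\to C^*(B)$ inverting $\iota$. The crucial observation is that each $s_\lambda$ lies in the single fibre $B_{\eta(\lambda)}$, because a functor sends vertices to $e$ and so $\eta(\lambda)\eta(s(\lambda))^{-1}=\eta(\lambda)$. Writing $\pi$ for the universal representation of $B$, put $T_\lambda:=\pi_{\eta(\lambda)}(s_\lambda)\in C^*(B)$. Using the representation identities for $\pi$ and the functoriality of $\eta$, one checks that $\{T_\lambda\}$ is a Cuntz--Krieger $\Lambda$-family: for instance $T_\lambda^*T_\lambda=\pi_e(s_\lambda^*s_\lambda)=\pi_e(s_{s(\lambda)})=T_{s(\lambda)}$ and $\sum_{\lambda\in v\Lambda^n}T_\lambda T_\lambda^*=\pi_e\big(\sum_{\lambda\in v\Lambda^n}s_\lambda s_\lambda^*\big)=\pi_e(s_v)=T_v$, the sums being finite and all the relevant products living in $B_e$. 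By the universal property of $C^*(\Lambda)$ there is $\phi\colon C^*(\Lambda)\to C^*(B)$ with $\phi(s_\lambda)=T_\lambda$. Finally $\phi\circ\iota=\id$: it is enough to check this on the elements $\pi_g(s_\lambda s_\mu^*)=\pi_{\eta(\lambda)}(s_\lambda)\pi_{\eta(\mu)}(s_\mu)^*=T_\lambda T_\mu^*$, which span a dense subalgebra of $C^*(B)$, and there $\phi\big(\iota(T_\lambda T_\mu^*)\big)=\phi(s_\lambda s_\mu^*)=T_\lambda T_\mu^*$. Hence $\iota$ is injective, and therefore an isomorphism. I expect the main obstacle to be the grading computation showing $B_g\cdot B_h\subseteq B_{gh}$: this is the only place where the Cuntz--Krieger relations and the functor hypothesis genuinely enter, and once $B$ is known to be a Fell bundle with $s_\lambda\in B_{\eta(\lambda)}$, everything else is a formal manipulation with the two universal properties.
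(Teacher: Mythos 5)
Your proof is correct and follows essentially the same route as the paper: the same grading computation via $\Lambda^{\min}$ and functoriality for $B_gB_h\subseteq B_{gh}$, positivity of $b^*b$ in the $C^*$-subalgebra $B_e$, and injectivity of $\iota$ via the observation that $\{\pi_{\eta(\lambda)}(s_\lambda)\}$ is a Cuntz--Krieger $\Lambda$-family, so the universal property of $C^*(\Lambda)$ produces the inverse. The paper phrases the last step as ``every representation of $C^*(B)$ factors through $\iota$,'' which applied to the universal representation is exactly your explicit left inverse $\phi$.
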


\begin{proof}
We first note that 
\[
(s_\lambda s_\mu^*)(s_\sigma s_\tau^*)=\sum_{(\alpha,\beta)\in\Lambda^{\min}(\mu,\sigma)}s_{\lambda\alpha}s_{\tau\beta}^*
\]
belongs to $B_{\eta(\lambda\alpha)\eta(\tau\beta)^{-1}}=B_{\eta(\lambda)\eta(\alpha)\eta(\beta)^{-1}\eta(\tau)^{-1}}$, which is the same as $B_{\eta(\lambda)\eta(\mu)^{-1}\eta(\sigma)\eta(\tau)^{-1}}$ because 
\begin{equation}\label{degpreserved}
\eta(\mu)\eta(\alpha)=\eta(\mu\alpha)=\eta(\sigma\beta)=\eta(\sigma)\eta(\beta).
\end{equation}
Thus $B_gB_h\subset B_{gh}$, and we also have $B_g^*\subset B_{g^{-1}}$. All the properties of the operations and norms follow from the corresponding properties in $C^*(\Lambda)$, and since $B_e$ is a $C^*$-subalgebra of $C^*(\Lambda)$, property \eqref{FBd} follows from spectral permanence:
\[
b^*b\geq 0\text{ in }C^*(\Lambda)\Longrightarrow \sigma_{C^*(\Lambda)}(b^*b)\subset [0,\infty)
\Longrightarrow \sigma_{B_e}(b^*b)\subset [0,\infty)\Longrightarrow 
b^*b\geq 0\text{ in }B_e.
\]

It is straightforward to check that the $\iota_g$ form a representation of $B$, and hence induce a homomorphism $\iota:C^*(B)\to C^*(\Lambda)$. It is surjective because the range is a $C^*$-subalgebra containing all the generators $s_\lambda$. If $\{\rho_g\}$ is another representation of $B$ in $C$, say, then the elements $\rho_{\eta(\lambda)}(s_\lambda)$ form a Cuntz--Krieger $\Lambda$ family in $C$, and there is a representation of $C^*(\Lambda)$ in $C$, so the corresponding representation $\rho:C^*(B)\to C$ factors through $\iota$. Thus $\iota$ is injective.
\end{proof}

Our first main theorem says that the $C^*$-algebra of the Fell bundle $B(\sigma)$ is generated by a family of partial isometries satisfying a twisted version of the orignal Cuntz--Krieger relations of \cite{KP}. Notice that only (CK2) is different:

\begin{thm}\label{twistgraphFell}
Suppose that $\Lambda$ is a row-finite $k$-graph with no sources, that $\eta:\Lambda\to G$ is a functor from $\Lambda$ into a group $G$, and that $B$ is the Fell bundle of Proposition~\ref{labeltoFell}, with fibres $B_g$ given by \eqref{defBg}. Suppose that $\sigma\in Z^2(G,\T)$ is a cocycle with values in the circle group $\T$. Let $\{s_\lambda:\lambda\in \Lambda\}$ be the canonical Cuntz--Krieger family in $C^*(\Lambda)$, viewed as a subset of $B$, and let $\pi^\sigma$ be the canonical representation of $B(\sigma)$ in $C^*(B(\sigma))$. Then the elements $t_\lambda:=\pi_{\eta(\lambda)}^\sigma(s_\lambda)$ of $C^*(B(\sigma))$ satisfy
\begin{enumerate}\item[]
\begin{enumerate}
\item[(CK1)] $\{q_v:=t_v:v\in \Lambda^0\}$ consists of mutually orthogonal projections;
\smallskip
\item[(CK2)] $t_\lambda t_\mu =\sigma(\eta(\lambda),\eta(\mu))t_{\lambda\mu}$ whenever $r(\mu)=s(\lambda)$;
\smallskip
\item[(CK3)] $t_\lambda^{*}t_\lambda=q_{s(\lambda)}$ for all $\lambda\in \Lambda$;
\smallskip
\item[(CK4)] $q_v=\sum_{\lambda\in v\Lambda^n} t_\lambda t_\lambda^{*}$ for all $v\in \Lambda^0$ and $n\in \N^k$.
\end{enumerate}
\end{enumerate}
Conversely, given a family $\{T_\lambda:\lambda\in \Lambda\}$ in a $C^*$-algebra $A$ satisfying \textnormal{(CK1--4)}, there is a homomorphism $\rho_T:C^*(B(\sigma))\to A$ such that $\rho_T(t_\lambda)=T_\lambda$ for all $\lambda\in \Lambda$.
\end{thm}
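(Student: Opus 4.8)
The plan is to establish the two implications by different routes: the relations (CK1--4) for the $t_\lambda$ come from a computation inside $C^*(B(\sigma))$ using the deformed operations, whereas for the converse I would sidestep the linear relations among the products $s_\lambda s_\mu^*$ altogether, using a tensoring device that first cancels and then recreates the cocycle, so that everything reduces to the universal properties of $C^*(\Lambda)$ and of $C^*(B(\sigma))$ already in hand. Throughout, write $p_v$ for the vertex projection $s_v$ of $C^*(\Lambda)$, and note that $s_\lambda=s_\lambda s_{s(\lambda)}^*$ lies in $B_{\eta(\lambda)}$ since $\eta(s(\lambda))=e$.

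For (CK1--4) for the $t_\lambda$: because $\sigma$ is normalised, the operations $\cdot_\sigma$ and ${}^{*_\sigma}$ on $B(\sigma)_e=B_e$ agree with those of $B_e$, so $\pi^\sigma_e$ is a $*$-representation of the $C^*$-algebra $B_e$ and (CK1) follows at once. Since $\pi^\sigma$ is a representation of $B(\sigma)$, since $s_\lambda s_\mu=s_{\lambda\mu}$ when $r(\mu)=s(\lambda)$, and since $\eta(\lambda)\eta(\mu)=\eta(\lambda\mu)$, we get $t_\lambda t_\mu=\pi^\sigma_{\eta(\lambda\mu)}(s_\lambda\cdot_\sigma s_\mu)=\sigma(\eta(\lambda),\eta(\mu))\,t_{\lambda\mu}$, which is (CK2). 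Combining $\pi^\sigma_g(b)^*=\pi^\sigma_{g^{-1}}(b^{*_\sigma})$, $b^{*_\sigma}=\overline{\sigma(g,g^{-1})}b^*$ and $\overline{\sigma(g,g^{-1})}\,\sigma(g^{-1},g)=1$ makes all circle-valued factors cancel and gives $t_\lambda^*t_\lambda=\pi^\sigma_e(s_\lambda^*s_\lambda)=\pi^\sigma_e(p_{s(\lambda)})=q_{s(\lambda)}$ and $t_\lambda t_\lambda^*=\pi^\sigma_e(s_\lambda s_\lambda^*)$, so (CK3) and (CK4) follow from the corresponding relations in $C^*(\Lambda)$ (the sum in (CK4) being finite by row-finiteness).

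For the converse, let $\overline{\sigma}\in Z^2(G,\T)$ be the pointwise conjugate of $\sigma$, and let $C^*(G,\overline{\sigma})$ be the twisted group $C^*$-algebra, with canonical unitaries $u_g$ satisfying $u_gu_h=\overline{\sigma}(g,h)u_{gh}$, $u_e=1$ and $u_g^*=\sigma(g,g^{-1})u_{g^{-1}}$. Given $\{T_\lambda\}$ in $A$ satisfying (CK1--4), set $U_\lambda:=T_\lambda\otimes u_{\eta(\lambda)}\in A\otimes C^*(G,\overline{\sigma})$. A one-line check of each relation shows that $\{U_\lambda\}$ satisfies the \emph{untwisted} Cuntz--Krieger relations of \cite{KP}: the factor $\sigma(\eta(\lambda),\eta(\mu))$ appearing in (CK2) for $\{T_\lambda\}$ is exactly killed by the factor $\overline{\sigma}(\eta(\lambda),\eta(\mu))$ coming from $u_{\eta(\lambda)}u_{\eta(\mu)}=\overline{\sigma}(\eta(\lambda),\eta(\mu))u_{\eta(\lambda\mu)}$, while (CK1), (CK3), (CK4) are untouched because $u_e=1$ and the $u_g$ are unitary. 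Hence the universal property of $C^*(\Lambda)$ provides a homomorphism $\phi\colon C^*(\Lambda)\to A\otimes C^*(G,\overline{\sigma})$ with $\phi(s_\lambda)=U_\lambda$, and since $B_gB_h\subseteq B_{gh}$ and $B_g^*\subseteq B_{g^{-1}}$ its restrictions $\Psi_g:=\phi|_{B_g}$ form a representation of the undeformed bundle $B$. A computation of $\Psi_g$ on the generators $s_\lambda s_\mu^*$ shows that $\Psi_g(B_g)\subseteq A\otimes\C u_g$, so we may write $\Psi_g(b)=\theta_g(b)\otimes u_g$ for a bounded linear map $\theta_g\colon B_g\to A$.

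Now feed $u_gu_h=\overline{\sigma}(g,h)u_{gh}$ and $u_g^*=\sigma(g,g^{-1})u_{g^{-1}}$ back into the representation axioms for $\Psi$: $\Psi_{gh}(bc)=\Psi_g(b)\Psi_h(c)$ becomes $\sigma(g,h)\theta_{gh}(bc)=\theta_g(b)\theta_h(c)$, i.e.\ $\theta_{gh}(b\cdot_\sigma c)=\theta_g(b)\theta_h(c)$, and $\Psi_g(b)^*=\Psi_{g^{-1}}(b^*)$ becomes $\theta_g(b)^*=\overline{\sigma(g,g^{-1})}\theta_{g^{-1}}(b^*)=\theta_{g^{-1}}(b^{*_\sigma})$ --- precisely the axioms for $\{\theta_g\}$ to be a representation of $B(\sigma)$; nondegeneracy of $\theta_e$ holds since $\theta_e(p_v)=T_v=q_v$ and, as in $C^*(\Lambda)$, the $q_v$ form an approximate identity for the $C^*$-algebra generated by $\bigcup_g\theta_g(B_g)$. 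The universal property of $C^*(B(\sigma))$ then gives $\rho_T$ with $\rho_T\circ\pi^\sigma_g=\theta_g$, and $\rho_T(t_\lambda)=\theta_{\eta(\lambda)}(s_\lambda)=T_\lambda$ because $\theta_{\eta(\lambda)}(s_\lambda)\otimes u_{\eta(\lambda)}=\Psi_{\eta(\lambda)}(s_\lambda)=T_\lambda\otimes u_{\eta(\lambda)}$ and $u_{\eta(\lambda)}\neq0$. The main obstacle, I expect, is not the first half (pure bookkeeping) but making this cocycle accounting airtight in the converse: verifying that $\Psi$ really does land in the homogeneous pieces $A\otimes\C u_g$ so that the slice maps $\theta_g$ exist, and, above all, checking that the circle-valued factors collected from $u_gu_h$ and from $u_g^*$ reassemble into the relations of \emph{this} deformation and not of $B(\overline{\sigma})$ or of some inequivalent bundle --- a short but easily-botched sign-chase. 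It is also worth recording that $\{T_\lambda\}\mapsto\rho_T$ and $\rho\mapsto\{\rho(t_\lambda)\}$ are mutually inverse, so that (CK1--4) genuinely present $C^*(B(\sigma))$.
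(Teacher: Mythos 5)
Your derivation of (CK1--4) is essentially the paper's own (its Lemma~\ref{compinbdle}): compute $s_\lambda\cdot_\sigma s_\mu$, $s_\lambda^{*_\sigma}\cdot_\sigma s_\lambda$ and $s_\lambda\cdot_\sigma s_\lambda^{*_\sigma}$ in the deformed bundle, watch the cocycle factors cancel, and push through $\pi^\sigma$. Your converse, however, takes a genuinely different route, and I believe it is correct. The paper constructs the gauge action $\beta$ of $\T^k$ (Proposition~\ref{gaugeaction}), re-grades $C^*(B(\sigma))$ as a Fell bundle $C$ over $\Z^k$, identifies the core as an inductive limit of the matrix algebras spanned by the $t_\lambda t_\mu^*$ with $\lambda,\mu\in\Lambda^nv$, and then builds a representation $\{\rho_n\}$ of $C$ by hand, the key norm estimate $\|\rho_n(a)\|^2=\|\rho_0(aa^*)\|\le\|a\|^2$ resting on Lemma~\ref{pullTin}; this takes several pages. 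You instead untwist by tensoring with the conjugate twisted group algebra: $U_\lambda:=T_\lambda\otimes u_{\eta(\lambda)}$ satisfies the untwisted Cuntz--Krieger relations, the universal property of $C^*(\Lambda)$ gives $\phi$ with $\phi(s_\lambda)=U_\lambda$, and slicing off the subspaces $A\otimes\C u_g$ (closed because $a\mapsto a\otimes u_g$ is isometric, and containing $\phi(B_g)$ because they contain the images of the spanning elements $s_\lambda s_\mu^*$) produces the maps $\theta_g$. I have checked your sign-chase: in $C^*(G,\overline\sigma)$ one has $u_g^*=\sigma(g,g^{-1})u_{g^{-1}}$, which converts $\Psi_g(b)^*=\Psi_{g^{-1}}(b^*)$ into $\theta_g(b)^*=\overline{\sigma(g,g^{-1})}\theta_{g^{-1}}(b^*)=\theta_{g^{-1}}(b^{*_\sigma})$, and $u_gu_h=\overline\sigma(g,h)u_{gh}$ converts multiplicativity of $\Psi$ into $\theta_g(b)\theta_h(c)=\theta_{gh}(b\cdot_\sigma c)$ --- so $\{\theta_g\}$ really is a representation of $B(\sigma)$ and not of $B(\overline\sigma)$. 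Your argument is shorter and equally direct (it avoids \cite{KPS2}, as the paper wished to do); what the paper's longer route buys is the gauge action, the matrix-unit picture of the core (Lemma~\ref{matrixunits}) and the isomorphism $C^*(C)\cong C^*(B(\sigma))$, all of which are reused in the uniqueness and amenability results of \S\ref{sec:unique}, so the extra work is not wasted there. The one loose end in your write-up is the standard one you half-address: $\theta_e$ is nondegenerate into $C^*(\{T_\lambda\})$ rather than into $A$, so one should corestrict to that subalgebra before invoking the universal property of $C^*(B(\sigma))$; the paper's own converse elides exactly the same point.
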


To prove that the elements $t_\lambda$ satisfy (CK1--4) we need to do some calculations in $B(\sigma)$.

\begin{lem}\label{compinbdle}
Suppose we have $\Lambda$, $\eta$, $B$ and $\sigma$ as in Theorem~\ref{twistgraphFell}. Let $\{s_\lambda:\lambda\in \Lambda\}$ be the canonical Cuntz--Krieger family in $C^*(\Lambda)$, viewed as a subset of $B(\sigma)$. Then
\begin{enumerate}\item[]
\begin{enumerate}
\item[(CK$\sigma$1)] $\{p_v:v\in \Lambda^0\}$ consists of mutually orthogonal projections (in other words, $p_v\cdot_\sigma p_v=p_v=p_v^{*_\sigma}$ and $p_v\cdot_\sigma p_w=\delta_{v,w}p_v$);
\smallskip
\item[(CK$\sigma$2)] $s_\lambda\cdot_\sigma s_\mu =\sigma(\eta(\lambda),\eta(\mu))s_{\lambda\mu}$ whenever $r(\mu)=s(\lambda)$;
\smallskip
\item[(CK$\sigma$3)] $s_\lambda^{*_\sigma}\cdot_\sigma s_\lambda=p_{s(\lambda)}$ for all $\lambda\in \Lambda$;
\smallskip
\item[(CK$\sigma$4)] $p_v=\sum_{\lambda\in v\Lambda^n} s_\lambda\cdot_\sigma s_\lambda^{*_\sigma}$ for all $v\in \Lambda^0$ and $n\in \N^k$.
\end{enumerate}
\end{enumerate}
\end{lem}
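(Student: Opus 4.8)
The plan is to reduce each of (CK$\sigma$1)--(CK$\sigma$4) to the corresponding Cuntz--Krieger relation in $C^*(\Lambda)$ by unwinding the deformed operations $\cdot_\sigma$ and $^{*_\sigma}$ and tracking the scalar factors they introduce. Two elementary facts about $\sigma$ will do all the work: first, $\sigma(e,g) = \sigma(g,e) = 1$, so that the multiplication and involution of $B(\sigma)$ restricted to $B(\sigma)_e = B_e$ coincide with those inherited from $C^*(\Lambda)$; and second, $\sigma(g,g^{-1}) = \sigma(g^{-1},g)$ for every $g\in G$, which follows from the cocycle identity for the triple $g, g^{-1}, g$ exactly as in the proof that $^{*_\sigma}$ is an involution. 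I will also use that $s_\lambda = s_\lambda s_{s(\lambda)}^*$ together with $\eta(s(\lambda)) = e$ shows $s_\lambda$ lies in the fibre $B_{\eta(\lambda)}$; in particular each $p_v = s_v$ lies in $B_e$.

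For (CK$\sigma$1) there is nothing to do beyond this observation: the $p_v$ all lie in $B_e$, where the operations are undeformed, so the assertion is precisely that the $s_v$ are mutually orthogonal projections in $C^*(\Lambda)$. For (CK$\sigma$2), I compute directly: since $s_\lambda\in B_{\eta(\lambda)}$ and $s_\mu\in B_{\eta(\mu)}$, the definition of $\cdot_\sigma$ and the composition relation $s_\lambda s_\mu = s_{\lambda\mu}$ in $C^*(\Lambda)$ give $s_\lambda\cdot_\sigma s_\mu = \sigma(\eta(\lambda),\eta(\mu))\, s_\lambda s_\mu = \sigma(\eta(\lambda),\eta(\mu))\, s_{\lambda\mu}$.

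For (CK$\sigma$3) and (CK$\sigma$4) I set $g = \eta(\lambda)$, so that $s_\lambda^{*_\sigma} = \overline{\sigma(g,g^{-1})}\, s_\lambda^*\in B_{g^{-1}}$. Then $s_\lambda^{*_\sigma}\cdot_\sigma s_\lambda = \sigma(g^{-1},g)\,\overline{\sigma(g,g^{-1})}\, s_\lambda^* s_\lambda$, and the scalar equals $1$ by the symmetry $\sigma(g,g^{-1}) = \sigma(g^{-1},g)$, so $s_\lambda^{*_\sigma}\cdot_\sigma s_\lambda = s_\lambda^* s_\lambda = s_{s(\lambda)} = p_{s(\lambda)}$ by (CK3) in $C^*(\Lambda)$; this is (CK$\sigma$3). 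Likewise $s_\lambda\cdot_\sigma s_\lambda^{*_\sigma} = \sigma(g,g^{-1})\,\overline{\sigma(g,g^{-1})}\, s_\lambda s_\lambda^* = s_\lambda s_\lambda^*$, so summing over $\lambda\in v\Lambda^n$ and invoking (CK4) in $C^*(\Lambda)$ gives $\sum_{\lambda\in v\Lambda^n} s_\lambda\cdot_\sigma s_\lambda^{*_\sigma} = \sum_{\lambda\in v\Lambda^n} s_\lambda s_\lambda^* = s_v = p_v$, which is (CK$\sigma$4).

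I do not expect a genuine obstacle: the argument is pure bookkeeping of cocycle scalars, and the only cancellation requiring more than the normalisation $\sigma(e,\cdot) = 1$ is the one in (CK$\sigma$3), which rests on $\sigma(g,g^{-1}) = \sigma(g^{-1},g)$. Notably, nothing about the cohomology class of $\sigma$ is used; the real content of Theorem~\ref{twistgraphFell} --- that $C^*(B(\sigma))$ is universal for families satisfying the twisted relations, in particular that the $t_\lambda$ are nonzero and the twisted relations not degenerate --- resides in its converse half rather than in this lemma.
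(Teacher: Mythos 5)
Your proof is correct and follows essentially the same route as the paper: identify the fibre containing each $s_\lambda$, unwind $\cdot_\sigma$ and $^{*_\sigma}$, and cancel the cocycle scalars using the normalisation $\sigma(e,\cdot)=\sigma(\cdot,e)=1$ and the symmetry $\sigma(g,g^{-1})=\sigma(g^{-1},g)$, reducing each relation to its untwisted counterpart in $C^*(\Lambda)$. The only cosmetic difference is that you dispose of (CK$\sigma$1) by observing that the operations are undeformed on $B_e$, where the paper writes out the one-line computation explicitly.
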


\begin{proof}
To see (CK$\sigma$1), we let $v,w\in \Lambda^0$. Then
\[
p_v\cdot_\sigma p_w=\sigma(\eta(v),\eta(w))p_vp_w=\sigma(e,e)p_vp_w=p_vp_w=\delta_{v,w}p_v,
\]
and similarly $p^{*\sigma}_v=p_v$. Since $s_{\lambda}\in B_{\eta(\lambda)}$ and $s_{\mu}\in B_{\eta(\mu)}$, (CK$\sigma$2) follows from the definition of the multiplication $\cdot_\sigma$. For (CK$\sigma$3), we use $\sigma(g,g^{-1})=\sigma(g^{-1},g)$ to compute
\begin{align*}
s_\lambda^{*_\sigma}\cdot_\sigma s_\lambda&=\overline{\sigma(\eta(\lambda),\eta(\lambda)^{-1})}s_\lambda^*\cdot_\sigma s_\lambda\\
&=\overline{\sigma(\eta(\lambda),\eta(\lambda)^{-1})}\,\sigma(\eta(\lambda)^{-1},\eta(\lambda))s_\lambda^*s_\lambda\\
&=s_\lambda^*s_\lambda=p_{s(\lambda)}.
\end{align*}
A similar computation shows that $s_\lambda\cdot_\sigma s_\lambda^{*_\sigma}=s_\lambda s_\lambda^*$, and hence (CK$\sigma$4) follows from the Cuntz--Krieger relation in $C^*(B)=C^*(\Lambda)$.
\end{proof}

Since the canonical maps $\pi_g^\sigma:B(\sigma)_g\to C^*(B(\sigma))$ form a representation of the Fell bundle $B(\sigma)$, Lemma~\ref{compinbdle} implies that the $t_\lambda$ satisfy (CK1--4).

As for the usual Cuntz--Krieger relations, the relations (CK1--4) have important consequences. First, (CK3) and (CK4) imply that
\begin{equation}\label{CKtoLeavitt}
q_{r(e)}t_e=t_e=t_eq_{s(e)}\text{ for every $e\in E^1$}.
\end{equation}
Next, we deduce from (CK1) and (CK4) that the projections $\{t_\lambda t_\lambda^{*}:d(\lambda)=n\}$ are mutually orthogonal. With this observation, we can strengthen (CK3) to 
\begin{equation}\label{CK3+}
t_\lambda^{*}t_{\mu}=\delta_{\lambda,\mu}q_{s(\lambda)}\text{ whenever $d(\mu)=d(\lambda)$}.
\end{equation} 
Now the usual calculation (for example, from \cite[Lemma~10.6]{R}), shows that for all $\lambda,\mu\in \Lambda$ and $n\geq d(\lambda)\vee d(\mu)$, we have
\begin{equation}\label{usual2}
t_\lambda^{*}t_\mu=
\sum_{\{\alpha,\beta\;:\;\lambda\alpha=\mu\beta,\;d(\lambda\alpha)=n\}}\overline{\sigma(\eta(\lambda),\eta(\alpha))}\sigma(\eta(\mu),\eta(\beta))t_{\alpha}t_\beta^{*}.
\end{equation}
Since the cocycle $\sigma$ is scalar-valued, \eqref{usual2} implies that the $t_\lambda t_\mu^*$ span a $*$-subalgebra of $C^*(B(\sigma))$, and since this subalgebra contains dense subsets of all the fibres $\pi^\sigma_g(B_g)$, it is dense in $C^*(B(\sigma))$. 

To see that the $t_\lambda$ have the universal property described in the last sentence of Theorem~\ref{twistgraphFell}, we use an analogue of the gauge action for $C^*(B(\sigma))$.

\begin{prop}\label{gaugeaction}
Suppose that the $t_\lambda$ are as described in Theorem~\ref{twistgraphFell}. Then there is a continuous action $\beta$ of $\T^k$ on $C^*(B(\sigma))$ such that
\begin{equation}\label{newgauge}
\beta_z\big(t_\lambda t_\mu^{*})=z^{d(\lambda)-d(\mu)}t_\lambda t_\mu^*.
\end{equation}
\end{prop}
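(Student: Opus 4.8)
The plan is to obtain $\beta$ by transporting the ordinary gauge action on $C^*(\Lambda)$ through the Fell bundle $B(\sigma)$. The universal property of $C^*(\Lambda)$ (see \cite{KP}) gives a strongly continuous action $\gamma:\T^k\to\Aut C^*(\Lambda)$ with $\gamma_z(s_\lambda)=z^{d(\lambda)}s_\lambda$, and hence $\gamma_z(s_\lambda s_\mu^*)=z^{d(\lambda)-d(\mu)}s_\lambda s_\mu^*$. Every spanning element $s_\lambda s_\mu^*$ of the fibre $B_g=\clsp\{s_\lambda s_\mu^*:\eta(\lambda)\eta(\mu)^{-1}=g\}$ is therefore sent by $\gamma_z$ to a scalar multiple of itself lying in $B_g$; since $\gamma_z$ is isometric, it follows that $\gamma_z(B_g)\subseteq B_g$, so $\gamma_z$ restricts to an isometric linear bijection of each fibre $B_g$, with inverse $\gamma_{z^{-1}}|_{B_g}$. (This restriction is not multiplication by a single scalar, because $d(\lambda)-d(\mu)$ varies over the spanning elements of $B_g$, but it is a perfectly good bounded bijection of the Banach space $B_g$, and that is all that is needed.)

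The first point to verify is that these restrictions are compatible with the \emph{deformed} operations of $B(\sigma)$, not merely with those of $B$. This is exactly where it matters that $\sigma$ is scalar-valued: for $b\in B_g$ and $c\in B_h$,
\[
\gamma_z(b\cdot_\sigma c)=\gamma_z\big(\sigma(g,h)bc\big)=\sigma(g,h)\gamma_z(b)\gamma_z(c)=\gamma_z(b)\cdot_\sigma\gamma_z(c),
\]
and likewise $\gamma_z(b^{*_\sigma})=\overline{\sigma(g,g^{-1})}\,\gamma_z(b^*)=\gamma_z(b)^{*_\sigma}$; moreover $\gamma_z|_{B_e}$ is a $C^*$-automorphism of $B(\sigma)_e=B_e$. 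Consequently the maps $\rho^z_g:=\pi^\sigma_g\circ(\gamma_z|_{B_g}):B(\sigma)_g\to C^*(B(\sigma))$ form a representation of the Fell bundle $B(\sigma)$, nondegeneracy of $\rho^z_e$ following from that of $\pi^\sigma_e$ since $\gamma_z|_{B_e}$ is surjective.

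By the universal property of $C^*(B(\sigma))$, each $\rho^z$ induces a $*$-homomorphism $\beta_z:C^*(B(\sigma))\to C^*(B(\sigma))$ characterised by $\beta_z\circ\pi^\sigma_g=\pi^\sigma_g\circ\gamma_z|_{B_g}$. Comparing on the generating set $\bigcup_g\pi^\sigma_g(B_g)$ and using that $z\mapsto\gamma_z$ is a group homomorphism, we get $\beta_z\beta_w=\beta_{zw}$ and $\beta_e=\id$, so each $\beta_z$ is an automorphism with inverse $\beta_{z^{-1}}$. Evaluating on generators gives $\beta_z(t_\lambda)=\pi^\sigma_{\eta(\lambda)}(\gamma_z(s_\lambda))=z^{d(\lambda)}t_\lambda$, and hence, using $z\in\T^k$,
\[
\beta_z(t_\lambda t_\mu^*)=\beta_z(t_\lambda)\beta_z(t_\mu)^*=z^{d(\lambda)}t_\lambda\,\overline{z^{d(\mu)}}\,t_\mu^*=z^{d(\lambda)-d(\mu)}t_\lambda t_\mu^*,
\]
which is \eqref{newgauge}. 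For strong continuity: on the $*$-subalgebra spanned by the $t_\lambda t_\mu^*$ — dense by the remarks following Lemma~\ref{compinbdle} — the map $z\mapsto\beta_z(t_\lambda t_\mu^*)=z^{d(\lambda)-d(\mu)}t_\lambda t_\mu^*$ is visibly continuous, and since each $\beta_z$ is isometric, the usual $\varepsilon/3$ approximation extends continuity of $z\mapsto\beta_z(a)$ to all $a\in C^*(B(\sigma))$. I do not expect a genuine obstacle here: the proof is routine once one is careful to keep the scalar cocycle in the correct places when checking that $\gamma_z$ respects $\cdot_\sigma$ and ${}^{*_\sigma}$ — without scalar-valuedness of $\sigma$, $\gamma_z$ would not in general descend to an automorphism of the deformed bundle.
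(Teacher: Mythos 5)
Your argument is correct and follows essentially the same route as the paper: restrict the usual gauge action $\gamma_z$ to the fibres $B_g$, check that these restrictions form an automorphism of the deformed bundle $B(\sigma)$, and invoke the universal property of $C^*(B(\sigma))$, finishing with an $\varepsilon/3$ continuity argument. Your verification that $\gamma_z$ respects $\cdot_\sigma$ and ${}^{*_\sigma}$ is a clean shortcut (pulling the scalar $\sigma(g,h)$ through the linear map $\gamma_z$ and using multiplicativity of $\gamma_z$ on $C^*(\Lambda)$) where the paper instead expands $(s_\lambda s_\mu^*)(s_\nu s_\tau^*)$ over $\Lambda^{\min}(\mu,\nu)$ and tracks the powers of $z$ explicitly, but this is a stylistic difference, not a different method.
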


\begin{proof}
We consider the usual gauge action $\gamma:\T^k\to \Aut C^*(\Lambda)$. Then because $\gamma_z(s_\lambda s_\mu^*)$ is a scalar multiple of $s_\lambda s_\mu^*$, we have $\gamma_z:B_g\to B_g$ for every $g\in G$. Since $\gamma_z$ is norm-preserving and $B_g$ has the norm inherited from $C^*(\Lambda)$, $\gamma_z|_{B_g}$ is a norm-preserving isomorphism of $B_g$ onto $B_g$. We aim to prove that the $\gamma_z|_{B_g}$ form an automorphism of the Fell bundle $B(\sigma)$.

We have to show that the $\gamma_z|_{B_g}$ respect the multiplication and involution in $B(\sigma)$, and since they are linear and isometric, it suffices to work with the elements $s_\lambda s_\mu^*$. Suppose that $s_\lambda s_\mu^*\in B_g$ and $s_\nu s_\tau^*\in B_h$ (so that $g=\eta(\lambda)\eta(\mu)^{-1}$ and $h=\eta(\nu)\eta(\tau)^{-1}$). Then
\begin{align*}
\gamma_z((s_\lambda s_\mu^*)\cdot_\sigma(s_\nu s_\tau^*))
&=\sigma(g,h)\gamma_z((s_\lambda s_\mu^*)(s_\nu s_\tau^*))\\
&=\sigma(g,h)\gamma_z\Big(\sum_{(\alpha,\beta)\in \Lambda^{\min}(\mu,\nu)}s_{\lambda\alpha}s_{\tau\beta}^*\Big)\\
&=\sigma(g,h)\sum_{(\alpha,\beta)\in \Lambda^{\min}(\mu,\nu)}z^{d(\lambda)+d(\alpha)-d(\tau)-d(\beta)}s_{\lambda\alpha}s_{\tau\beta}^*.
\end{align*}
For $(\alpha,\beta)\in \Lambda^{\min}(\mu,\nu)$ we have $d(\alpha)-d(\beta)=d(\nu)-d(\mu)$, so the coefficients inside the sum are all the same, and we can factor it out. Now we recognise the sum as $(s_\lambda s_\mu^*)(s_\nu s_\tau^*)$, and split the power of $z$ to get
\begin{align*}
\gamma_z((s_\lambda s_\mu^*)\cdot_{\sigma}(s_\nu s_\tau^*))
&=\sigma(g,h)(z^{d(\lambda)-d(\mu)}s_\lambda s_\mu^*)(z^{d(\nu)-d(\tau)}s_\nu s_\tau^*)\\
&=\gamma_z(s_\lambda s_\mu^*)\cdot_\sigma\gamma_z(s_\nu s_\tau^*).
\end{align*}
Thus the $\gamma_z|_{B_g}$ preserve the multiplication in $B(\sigma)$, and a similar but easier calculation shows that they preserve the adjoint. Thus they form an isomorphism of Fell bundles, as claimed.

Now the universal property of $(C^*(B(\sigma)),\pi_g)$ implies that there is an automorphism $\beta_z$ of $C^*(B(\sigma))$ such that 
\[
\beta_z\big(\pi_{\eta(\lambda)\eta(\mu)^{-1}}(s_\lambda s_\mu^*)\big)=z^{d(\lambda)-d(\mu)}\pi_{\eta(\lambda)\eta(\mu)^{-1}}(s_\lambda s_\mu^*).
\]
Since $t_\lambda t_\mu^*$ is a scalar multiple of $\pi_{\eta(\lambda)\eta(\mu)^{-1}}(s_\lambda s_\mu^*)$, the automorphism $\beta_z$ satisfies \eqref{newgauge}. The usual $\epsilon/3$ argument shows that $\beta$ is continuous.
\end{proof}

\begin{rmk}
At this stage we could complete the proof of Theorem~\ref{twistgraphFell} by recognising the relations (CK1--4) as a special case of those in \cite[Definition~5.2]{KPS2}, and applying the results of that paper (in particular the gauge-invariant uniqueness theorem \cite[Corollary~7.7]{KPS2}). However, one naturally wonders whether there is a direct argument that exploits the Fell-bundle structure. We provide one, and then describe the connection to \cite{KPS2} more precisely in Corollary~\ref{appltoKPS}.
\end{rmk}

The action $\beta:\T^k\to \Aut C^*(B(\sigma))$ gives a Fell bundle $C$ over $\Z^k$ with fibres
\[
C^*(B(\sigma))_n:=\clsp\{t_\lambda t_\mu^*:d(\lambda)-d(\mu)=n\}.
\]
Then the inclusion maps $\iota_n$ give a representation of the Fell bundle $C$ in $C^*(B(\sigma))$, and hence induce a homomorphism $\iota:C^*(C)\to C^*(B(\sigma))$. This homomorphism intertwines the dual action of $\T^k$ on $C^*(C)$ and the action $\beta$ on $C^*(B(\sigma))$, and $\iota_e$ is injective on $C_0$ (being an inclusion map). Thus $\iota$ is an isomorphism of $C^*(C)$ onto $C^*(B(\sigma)) $. 

Now suppose that $\{T_\lambda:\lambda\in \Lambda\}$ are elements of a $C^*$-algebra $A$ satisfying (CK1--4). We will find a representation of $C^*(B(\sigma))$ in $A$ by building a representation $\{\rho_n\}$ of the Fell bundle $C$. First we need to understand the core $C^*(B(\sigma))_0=C^*(B(\sigma))^\beta$.

\begin{lem}\label{matrixunits}
Suppose that $\beta:\T^k\to \Aut C^*(B(\sigma))$ is the action of Proposition~\ref{gaugeaction}. Then for each $v\in \Lambda^0$ and $n\in \N^k$, $\{t_\lambda t_\mu^*:\lambda,\mu\in \Lambda^nv\}$ is a set of nonzero matrix units in $C^*(B(\sigma))^\beta$.
\end{lem}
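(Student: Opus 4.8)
The plan is to verify the three defining properties of a set of matrix units — namely that each $t_\lambda t_\mu^*$ is nonzero, that $(t_\lambda t_\mu^*)^* = t_\mu t_\lambda^*$, and that $(t_\lambda t_\mu^*)(t_\nu t_\tau^*) = \delta_{\mu,\nu}\, t_\lambda t_\tau^*$ for $\lambda,\mu,\nu,\tau \in \Lambda^n v$ — and to observe that each such product lies in $C^*(B(\sigma))^\beta$. The last point is immediate from \eqref{newgauge}: since $d(\lambda) = d(\mu) = n$ for all the paths involved, $\beta_z(t_\lambda t_\mu^*) = z^{d(\lambda)-d(\mu)} t_\lambda t_\mu^* = t_\lambda t_\mu^*$, so all these elements are $\beta$-fixed. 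The adjoint relation $(t_\lambda t_\mu^*)^* = t_\mu t_\lambda^*$ is formal.

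For the multiplication relation, I would compute $(t_\lambda t_\mu^*)(t_\nu t_\tau^*) = t_\lambda (t_\mu^* t_\nu) t_\tau^*$ and apply the strengthened relation \eqref{CK3+}: since $d(\mu) = d(\nu) = n$, we have $t_\mu^* t_\nu = \delta_{\mu,\nu}\, q_{s(\mu)}$. When $\mu \neq \nu$ the product vanishes; when $\mu = \nu$ it reduces to $t_\lambda q_{s(\mu)} t_\tau^* = t_\lambda q_{s(\lambda)} t_\tau^*$, and using \eqref{CKtoLeavitt} (or rather its evident extension $t_\lambda q_{s(\lambda)} = t_\lambda$ and $q_{s(\tau)} t_\tau^* = t_\tau^*$, together with $s(\lambda) = s(\tau)$ when $\mu=\nu$ forces all four paths to share source $s(\mu)$... — more carefully, $s(\lambda) = s(\mu) = s(\nu) = s(\tau)$ need not hold in general, but $\mu=\nu$ gives $s(\mu)=s(\nu)$, and $t_\lambda t_\mu^* \neq 0$ will require $s(\lambda)=s(\mu)$ via (CK3)), we get $t_\lambda t_\tau^*$. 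This is the routine part, essentially the same calculation as in the untwisted case (e.g. \cite[Lemma~10.6]{R}), and the cocycle $\sigma$ does not intervene here because no paths are being composed.

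The one genuinely substantive point is showing $t_\lambda t_\mu^* \neq 0$; equivalently, since $\|t_\lambda t_\mu^*\|^2 = \|t_\mu t_\lambda^* t_\lambda t_\mu^*\| = \|t_\mu q_{s(\lambda)} t_\mu^*\|$ by (CK3), it suffices to show $q_v \neq 0$ for each $v \in \Lambda^0$ and that $t_\mu \neq 0$ whenever $r(\mu) = v$. For this I would exploit the isomorphism $\iota : C^*(B) \to C^*(\Lambda)$ of Proposition~\ref{labeltoFell} together with the fact that $B(\sigma)$ has the same underlying Banach bundle as $B$: the canonical representation $\pi^\sigma$ of $B(\sigma)$ is isometric on each fibre (as noted in \S2, every Fell bundle embeds isometrically in its full $C^*$-algebra), so $\|t_\lambda\| = \|\pi^\sigma_{\eta(\lambda)}(s_\lambda)\| = \|s_\lambda\|_{B_{\eta(\lambda)}} = \|s_\lambda\|_{C^*(\Lambda)} = 1 \neq 0$, and similarly $\|q_v\| = \|s_v\|_{C^*(\Lambda)} = 1$. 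Thus each $t_\lambda t_\mu^*$ has norm $1$ and in particular is nonzero. I expect this isometric-embedding observation to be the crux: it is what allows one to transfer nondegeneracy from the untwisted graph algebra, where it is standard (the $s_v$ are nonzero because $C^*(\Lambda)$ is generated by a Cuntz--Krieger family realised faithfully, e.g. on a path-space representation), to the deformation.
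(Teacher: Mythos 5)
Your proof is correct and follows essentially the same route as the paper: the matrix-unit relation comes from \eqref{CK3+} and \eqref{CKtoLeavitt} exactly as in the paper's computation, and nonvanishing rests on the same \S2 fact that $B(\sigma)$ embeds isometrically in $C^*(B(\sigma))$ --- the paper phrases this as faithfulness of the left regular representation on $B_e$, giving $q_v\neq 0$, and then invokes (CK3) to get $t_\lambda\neq 0$. Your worry about matching sources is moot: every path in $\Lambda^n v$ has source $v$, so $t_\mu^* t_\nu=\delta_{\mu,\nu}q_v$ and $t_\lambda q_v=t_\lambda$ with no further case analysis needed.
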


\begin{proof}
Suppose $\lambda,\mu,\nu,\tau\in \Lambda^nv$. We calculate, using first \eqref{CK3+} and then \eqref{CKtoLeavitt}:
\begin{equation}\label{matrix}
(t_\lambda t_\mu^*)(t_\nu t_\tau^*)=\delta_{\mu,\nu}t_\lambda q_{v} t_\tau^*=\delta_{\mu,\nu}t_\lambda t_\tau^*.
\end{equation}
Thus they are matrix units. Since the vertex projections belong to $B(\sigma)_e=B_e$, and the left regular representation of $B(\sigma)$ is faithful on $B_e$, the vertex projections $q_v$ are all nonzero. Now (CK3) implies that each $t_\lambda$  is nonzero, and so is every $t_\lambda t_\mu^*$.
\end{proof}

Recall that $\{T_\lambda:\lambda\in \Lambda\}\subset A$ satisfy (CK1--4). The calculation \eqref{matrix} implies that 
$\{T_\lambda T_\mu^*:\lambda,\mu\in \Lambda^nv\}$ is a set of matrix units in $A$, and hence by Lemma~\ref{matrixunits} there is a homomorphism $\phi_{n,v}$ of $D(\sigma)_{n,v}:=\clsp\{t_\lambda t_\mu^*:\lambda,\mu\in \Lambda^nv\}$ into $A$ such that $\phi_{n,v}(t_\lambda t_\mu^*)=T_\lambda T_\mu^*$. These combine to give a homomorphism $\phi_n$ of $D_n:=\bigoplus_{v\in \Lambda^0}D(\sigma)_{n,v}$ into $A$. The Cuntz--Krieger relation (CK4) implies that for $m\leq n$ in $\N^k$ we have $D_m\subset D_n$, and that $\phi_n|_{D_m}=\phi_m$. Since the $\phi_n$ are all norm-decreasing, we get a homomorphism $\rho_0$ of $B(\sigma)_0=\overline{\bigcup_{n\in \N^k}D_n}$ into $A$ such that $\rho_0(t_\lambda t_\mu^*)=T_\lambda T_\mu^*$ for all $\lambda,\mu$ with $d(\lambda)=d(\mu)$.

The next lemma follows from a calculation using the factorisation property and \eqref{CK3+}:

\begin{lem}\label{intocore}
Suppose that $\nu,\tau\in \Lambda$, and write $n:=d(\nu)-d(\tau)$ as $n=n_+-n_-$ with $n_{\pm}\in \N^k$ and $n_+\wedge n_-=0$. Then
\[
\lambda\in \Lambda^{n_+},\;\mu \in \Lambda^{n_-}\text{ and }t_\lambda^* t_\nu t_\tau^*t_\mu\not=0\Longrightarrow \lambda=\nu(0,n_+)\text{ and }\mu=\tau(0,n_-).
\]
\end{lem}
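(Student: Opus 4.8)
The plan is to split $t_\lambda^{*}t_\nu t_\tau^{*}t_\mu$ as the product $(t_\lambda^{*}t_\nu)(t_\tau^{*}t_\mu)$ and to show that each of the two factors already vanishes unless the asserted equality holds; since a product in a $C^*$-algebra is zero as soon as one of its factors is, this will prove the lemma.

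First I would note that the hypotheses force $n_+\le d(\nu)$ and $n_-\le d(\tau)$: for each coordinate $i$ at least one of $(n_+)_i,(n_-)_i$ equals $0$, because $n_+\wedge n_-=0$, and if for instance $(n_-)_i=0$ then $(n_+)_i=d(\nu)_i-d(\tau)_i\le d(\nu)_i$ while $(n_-)_i=0\le d(\tau)_i$, the opposite case being symmetric. Hence the factorisations $\nu=\nu(0,n_+)\,\nu(n_+,d(\nu))$ and $\tau=\tau(0,n_-)\,\tau(n_-,d(\tau))$ are legitimate. Since $\sigma$ is $\T$-valued, (CK2) lets me write $t_\nu$ as a unimodular scalar times $t_{\nu(0,n_+)}\,t_{\nu(n_+,d(\nu))}$, so $t_\lambda^{*}t_\nu$ is a scalar multiple of $(t_\lambda^{*}t_{\nu(0,n_+)})\,t_{\nu(n_+,d(\nu))}$. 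Now $d(\lambda)=n_+=d(\nu(0,n_+))$, so \eqref{CK3+} collapses $t_\lambda^{*}t_{\nu(0,n_+)}$ to $\delta_{\lambda,\nu(0,n_+)}\,q_{s(\lambda)}$, whence $t_\lambda^{*}t_\nu=0$ unless $\lambda=\nu(0,n_+)$.

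The second factor is handled symmetrically: taking adjoints in (CK2) expresses $t_\tau^{*}$ as a unimodular scalar times $t_{\tau(n_-,d(\tau))}^{*}\,t_{\tau(0,n_-)}^{*}$, so that $t_\tau^{*}t_\mu$ is a scalar multiple of $t_{\tau(n_-,d(\tau))}^{*}\,(t_{\tau(0,n_-)}^{*}t_\mu)$, and since $d(\mu)=n_-=d(\tau(0,n_-))$ a second application of \eqref{CK3+} gives $t_\tau^{*}t_\mu=0$ unless $\mu=\tau(0,n_-)$. Combining the two statements yields the lemma. I do not expect any real obstacle; the only point that needs care is the degree bookkeeping of the first step, which is exactly what guarantees that the factorisations used are defined and that the two paths compared in each use of \eqref{CK3+} genuinely have equal degree.
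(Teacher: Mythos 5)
Your argument is correct and is precisely the calculation the paper has in mind: the paper states only that the lemma ``follows from a calculation using the factorisation property and \eqref{CK3+}'', and your proof---factoring $\nu=\nu(0,n_+)\nu(n_+,d(\nu))$ and $\tau=\tau(0,n_-)\tau(n_-,d(\tau))$ via (CK2) and then killing each factor with \eqref{CK3+}---is exactly that calculation, with the degree bookkeeping ($n_+\le d(\nu)$, $n_-\le d(\tau)$) correctly justified from $n_+\wedge n_-=0$.
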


Lemma~\ref{intocore} implies that, for every $n\in \N^k$ and every $a$ in 
\[
C^*(B(\sigma))^0_n:=\lsp\{t_\nu t_\tau^*:d(\nu)-d(\tau)=n\},
\]
the sum
\[
\sum_{\lambda\in \Lambda^{n_+}}\sum_{\mu \in \Lambda^{n_-}}T_\lambda\rho_0(t_\lambda^* at_\mu)T_\mu^*
\]
has at most finitely many nonzero terms. Thus there are well-defined linear maps $\rho_n:C^*(B(\sigma))^0_n\to A$ such that
\[
\rho_n(a)=\sum_{\lambda\in \Lambda^{n_+}}\sum_{\mu \in \Lambda^{n_-}}T_\lambda\rho_0(t_\lambda^* a t_\mu)T_\mu^*.
\]
We aim to prove that each $\rho_n$ extends to a linear map on the closure $C^*(B(\sigma))_n$ of $C^*(B(\sigma))_n^0$, and that the $\rho_n$ form a representation of the Fell bundle $C$.

Our estimate of the norm of $\rho_n$ hinges on the following lemma.

\begin{lem}\label{pullTin}
Suppose $b\in C^*(B(\sigma))_0^0$ and $d(\lambda)=d(\mu)$. Then $T_\lambda\rho_0(b)T_\mu^*=\rho_0(t_\lambda b t_\mu^*)$.
\end{lem}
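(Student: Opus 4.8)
The plan is to reduce the general case to the spanning elements $b = t_\nu t_\tau^*$ with $d(\nu)=d(\tau)$ by linearity and continuity. Since both sides of the asserted identity are linear in $b$, and since $T_\lambda \rho_0(\cdot)T_\mu^*$ and $\rho_0(t_\lambda(\cdot)t_\mu^*)$ are both norm-continuous (the first because $\rho_0$ is a $*$-homomorphism hence contractive and multiplication by the partial isometries $T_\lambda,T_\mu$ is bounded; the second because $\rho_0$ is contractive and left/right multiplication by $t_\lambda, t_\mu^*$ inside $C^*(B(\sigma))$ is bounded), it suffices to verify the identity when $b = t_\nu t_\tau^*$ with $d(\nu)=d(\tau)=:m$. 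Here I must be slightly careful: $t_\lambda b t_\mu^*$ lies in $C^*(B(\sigma))^\beta = B(\sigma)_0$ since $d(\lambda)+d(\nu)-d(\tau)-d(\mu) = 0$, so $\rho_0(t_\lambda b t_\mu^*)$ makes sense.

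First I would compute the right-hand side directly in $C^*(B(\sigma))$. Using the factorisation property, write $\lambda\nu$ and $\mu\tau$ as paths (noting $t_\lambda t_\nu = \sigma(\eta(\lambda),\eta(\nu))t_{\lambda\nu}$ by (CK2) when $r(\nu)=s(\lambda)$, and the product is zero otherwise, and similarly for the adjoint side via (CK3$+$)). Thus $t_\lambda b t_\mu^* = t_\lambda t_\nu t_\tau^* t_\mu^*$ is, up to a scalar of modulus one coming from the cocycle, either $t_{\lambda\nu}t_{\mu\tau}^*$ (when the composites are defined, which forces $s(\lambda)=r(\nu)$ and $s(\mu)=r(\tau)$) or $0$. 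Since $d(\lambda\nu) = d(\lambda)+m$ and $d(\mu\tau)=d(\mu)+m$ agree (as $d(\lambda)=d(\mu)$), this element of $B(\sigma)_0$ is already a spanning element of the form $t_\alpha t_\beta^*$ with $d(\alpha)=d(\beta)$, so $\rho_0$ of it is, by the defining property of $\rho_0$, the corresponding scalar times $T_{\lambda\nu}T_{\mu\tau}^*$ (or $0$).

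Then I would compute the left-hand side: $T_\lambda \rho_0(t_\nu t_\tau^*) T_\mu^* = T_\lambda T_\nu T_\tau^* T_\mu^*$ by the defining property of $\rho_0$ on the spanning element $t_\nu t_\tau^*$. Now I apply exactly the same (CK2) and (CK3$+$) manipulations to the $T$'s that I applied to the $t$'s: the family $\{T_\lambda\}$ satisfies (CK1--4) by hypothesis, hence also the consequences \eqref{CKtoLeavitt} and \eqref{CK3+}, so $T_\lambda T_\nu = \sigma(\eta(\lambda),\eta(\nu))T_{\lambda\nu}$ when $s(\lambda)=r(\nu)$ and $0$ otherwise, with the matching statement for $T_\tau^* T_\mu^* = (T_\mu T_\tau)^*$. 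This produces precisely the same scalar and the same element $T_{\lambda\nu}T_{\mu\tau}^*$ (or $0$) as on the right-hand side. Matching the two computations term by term gives the identity on spanning elements, and the reduction above completes the proof.

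The only real point requiring attention --- and the step I would treat most carefully --- is bookkeeping the cocycle scalars: one must check that the modulus-one factor $\sigma(\eta(\lambda),\eta(\nu))\overline{\sigma(\eta(\mu),\eta(\tau))}$ (and any extra factor from reassociating the adjoints) is genuinely the \emph{same} on both sides. It is, because the scalar is manufactured purely from the functor $\eta$ and the cocycle $\sigma$ via the relations (CK2)/(CK3), which hold identically for the $t$-family and the $T$-family; no property of the ambient algebra enters. The contractivity of $\rho_0$, needed for the density argument, is immediate since $\rho_0$ is a $*$-homomorphism of $C^*$-algebras.
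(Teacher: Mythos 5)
Your proposal is correct and follows essentially the same route as the paper: reduce by linearity to $b=t_\nu t_\tau^*$ with $d(\nu)=d(\tau)$, evaluate both sides using (CK2) and \eqref{CK3+} for the $t$- and $T$-families respectively, and observe that the cocycle scalars match because both families satisfy identical relations. The appeal to continuity and contractivity of $\rho_0$ is harmless but unnecessary, since $C^*(B(\sigma))_0^0$ is by definition the algebraic span of the elements $t_\nu t_\tau^*$, so linearity alone suffices.
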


\begin{proof}
By linearity, it suffices to prove this for $b=t_\nu t_\tau^*$ with $d(\nu)=d(\tau)$. Then 
\begin{align*}
T_\lambda\rho_0(b)T_\mu^*&=T_\lambda(T_\nu T_\tau^*)T_\mu^*=\sigma(\eta(\lambda),\eta(\nu))T_{\lambda\nu}T_{\mu\tau}^*\overline{\sigma(\eta(\mu),\eta(\tau))}\\
&=\rho_0\big(\sigma(\eta(\lambda),\eta(\nu))t_{\lambda\nu}t_{\mu\tau}^*\overline{\sigma(\eta(\mu),\eta(\tau))}\big)\\
&=\rho_0(t_\lambda t_\nu t_\tau^*t_{\mu}^*)=\rho_0(t_\lambda b t_\tau^*).\qedhere
\end{align*}
\end{proof}

\begin{proof}[End of the proof of Theorem~\ref{twistgraphFell}]
We now fix $n\in \N^k$ and $a\in C^*(B(\sigma))^0_n$. Write $n=n_+-n_-$ with $n_{\pm}\in \N^k$ and $n_+\wedge n_-=0$. Then for every pair $\nu,\tau$ with $d(\nu)-d(\tau)=n$, we have $d(\nu)\geq n_+$ and $d(\tau)\geq n_-$, and we can factor $\nu=\lambda \nu'$ and $\tau=\mu\tau'$ for some $\lambda\in \Lambda^{n_+}$ and $\mu \in \Lambda^{n_-}$. Then each $t_\nu t_\tau^*$ is a scalar multiple of $t_\lambda t_{\nu'}t_{\tau'}^*t_\mu$, and hence we can rearrange $a$ as a finite sum
\[
a=\sum_{\lambda\in \Lambda^{n_+}}\sum_{\mu \in \Lambda^{n_-}}t_\lambda a_{\lambda,\mu}t_\mu^*
\]
in which $a_{\lambda,\mu}$ belongs to $q_{s(\lambda)}C^*(B(\sigma))^0_0q_{s(\mu)}$. The relations \eqref{CK3+} and \eqref{CKtoLeavitt} then imply that
\[
\rho_n(a)=\sum_{\lambda\in \Lambda^{n_+}}\sum_{\mu \in \Lambda^{n_-}}T_\lambda\rho_0(t_\lambda^*t_\lambda a_{\lambda,\mu}t_\mu^* t_\mu)T_\mu^*
=\sum_{\lambda\in \Lambda^{n_+}}\sum_{\mu \in \Lambda^{n_-}}T_\lambda\rho_0(a_{\lambda,\mu})T_\mu^*.
\]
Thus
\begin{align*}
\|\rho_n(a)\|^2&=\Big\|\sum_{\lambda,\mu}\sum_{\lambda',\mu'}T_\lambda\rho_0(a_{\lambda,\mu})T_{\mu}^*T_{\mu'}\rho_0(a_{\lambda',\mu'})^*T_{\lambda'}^*\Big\|\\
&=\Big\|\sum_{\lambda,\mu}\sum_{\lambda'}T_\lambda\rho_0(a_{\lambda,\mu}q_{s(\mu)}a_{\lambda',\mu}^*)T_{\lambda'}^*\Big\|,
\end{align*}
and hence Lemma~\ref{pullTin} implies that
\[
\|\rho_n(a)\|^2=\Big\|\sum_{\lambda,\mu,\lambda'}\rho_0(t_\lambda a_{\lambda,\mu}q_{s(\mu)}a_{\lambda',\mu}^*t_{\lambda'}^*)^*\Big\|.
\]
We now write
\[
q_{s(\mu)}=t_\mu^* t_\mu=\sum_{\mu'\in \Lambda^{n_-}}t_\mu^* t_{\mu'},
\]
and obtain
\[
\|\rho_n(a)\|^2
=\Big\|\sum_{\lambda,\mu}\sum_{\lambda',\mu'}\rho_0(t_\lambda a_{\lambda,\mu}t_\mu^*)\rho_0(t_{\mu'}a_{\lambda',\mu'}^*t_{\lambda'}^*)\Big\|
=\|\rho_0(aa^*)\|\leq \|a\|^2.
\]
We can now extend $\rho_n$ to a norm-decreasing linear map of $C^*(B(\sigma))_n$ into $A$.

We still have to check that the $\rho_n$ form a representation of $C$. Since the $\rho_n$ are continuous and linear, it suffices to check this on generators. Another application of \eqref{CK3+} shows that $\rho_n(t_\nu t_\tau^*)=T_\nu T_\tau^*$ whenever $d(\nu)-d(\tau)=n$. Suppose that $d(\lambda)-d(\mu)=m$ and $d(\nu)-d(\tau)=n$. Then
\begin{align*}
\rho_m(t_\lambda t_\mu^*)\rho_n(t_\nu t_\tau^*)&=(T_\lambda T_\mu^*)(T_\nu T_\tau^*)\\
&=\sum_{(\alpha,\beta)\in \Lambda^{\min}(\mu,\nu)}T_\lambda T_\alpha T_\beta^*T_\tau^*\\
&=\sum_{(\alpha,\beta)\in \Lambda^{\min}(\mu,\nu)}\sigma(\eta(\lambda),\eta(\alpha))T_{\lambda\alpha}T_{\tau\beta}^*\overline{\sigma(\eta(\tau),\eta(\beta))}\\
&=\rho_{m+n}\Big(\sum_{(\alpha,\beta)\in \Lambda^{\min}(\mu,\nu)}\sigma(\eta(\lambda),\eta(\alpha))t_{\lambda\alpha}t_{\tau\beta}^*\overline{\sigma(\eta(\tau),\eta(\beta))}\Big)\\
&=\rho_{m+n}\big((t_\lambda t_\mu^*)(t_\nu t_\tau^*)\big).
\end{align*}
The equation $\rho_n(t_\nu t_\tau^*)=T_\nu T_\tau^*$ immediately implies that $\rho_n(a)^*=\rho_{-n}(a^*)$. Thus $\{\rho_n\}$ is a representation of $C$ in $A$, and there is a homomorphism $\rho:C^*(C)\to A$ such that $\rho(t_\lambda)=T_\lambda$. Pulling this over to $C^*(B(\sigma))$ under the isomorphism $\iota:C^*(C)\to C^*(B(\sigma))$ gives a homomorphism $\rho:C^*(B(\sigma))\to A$ such that $\rho(t_\lambda)=T_\lambda$.

This completes the proof of Theorem~\ref{twistgraphFell}.
\end{proof}

We now show that the  $C^*$-algebras $C^*(B(\sigma))$ are ``twisted higher-rank graph $C^*$-algebras,'' as studied in \cite{KPS1, KPS2, KPS3}. Define $c_\sigma:\{(\lambda,\mu):s(\lambda)=r(\mu)\}\to \T$ by 
\begin{equation}\label{defcatcocycle}
c_\sigma(\lambda, \mu)=\sigma(\eta(\lambda),\eta(\mu)).
\end{equation} 
Then $c_\sigma$ is a ``categorical $2$-cocycle'', as in \cite[Lemma~3.8]{KPS2}, and the family $\{t_\lambda\}$ is a  $(\Lambda,c_\sigma)$-family in $C^*(B(\sigma))$ in the sense of \cite[Definition~5.2]{KPS2}. Theorem~\ref{twistgraphFell} says that $(C^*(B(\sigma)),\{t_\lambda\})$ is universal for $C^*$-algebras generated by such families. Hence we have:  

\begin{cor}\label{appltoKPS}
Suppose we have $\Lambda$, $\eta$, $B$ and $\sigma$ as in Theorem~\ref{twistgraphFell}.  Define $c_\sigma$ using \eqref{defcatcocycle}. Then there is an isomorphism of $C^*(\Lambda,c_\sigma)$ onto $C^*(B(\sigma))$ which carries the canonical Cuntz--Krieger $(\Lambda,c_\sigma)$-family into $\{t_\lambda\}=\{\pi_{\eta(\lambda)}^\sigma(s_\lambda)\}$.
\end{cor}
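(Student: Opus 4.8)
The plan is to identify both $C^*(\Lambda,c_\sigma)$ and $C^*(B(\sigma))$ as universal $C^*$-algebras for the same class of generating families, and then run the standard argument that two objects with the same universal property are canonically isomorphic. First I would confirm, as asserted in the paragraph preceding the statement, that $c_\sigma$ is a categorical $2$-cocycle in the sense of \cite[Lemma~3.8]{KPS2}: the normalisation $c_\sigma(\lambda,s(\lambda))=c_\sigma(r(\lambda),\lambda)=1$ holds because $\eta$ carries vertices (the identity morphisms of $\Lambda$) to $e$ and $\sigma(g,e)=\sigma(e,g)=1$, while the identity $c_\sigma(\lambda,\mu)c_\sigma(\lambda\mu,\nu)=c_\sigma(\lambda,\mu\nu)c_\sigma(\mu,\nu)$ for composable $\lambda,\mu,\nu$ is exactly the group cocycle identity \eqref{cocycleid} for the triple $\eta(\lambda),\eta(\mu),\eta(\nu)$, using functoriality $\eta(\lambda\mu)=\eta(\lambda)\eta(\mu)$.

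Next I would match the relations. A Cuntz--Krieger $(\Lambda,c_\sigma)$-family in the sense of \cite[Definition~5.2]{KPS2} is a family satisfying the usual Kumjian--Pask relations (CK1), (CK3) and (CK4) together with $t_\lambda t_\mu=c_\sigma(\lambda,\mu)t_{\lambda\mu}$ whenever $r(\mu)=s(\lambda)$; since $c_\sigma(\lambda,\mu)=\sigma(\eta(\lambda),\eta(\mu))$ by \eqref{defcatcocycle}, this last relation is precisely (CK2) of Theorem~\ref{twistgraphFell}. Hence ``family satisfying (CK1--4)'' and ``Cuntz--Krieger $(\Lambda,c_\sigma)$-family'' denote the same objects. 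By Theorem~\ref{twistgraphFell}, $\{t_\lambda\}=\{\pi^\sigma_{\eta(\lambda)}(s_\lambda)\}$ is such a family and, for every $(\Lambda,c_\sigma)$-family $\{T_\lambda\}$ in a $C^*$-algebra $A$, there is a homomorphism $C^*(B(\sigma))\to A$ taking $t_\lambda$ to $T_\lambda$; in other words $(C^*(B(\sigma)),\{t_\lambda\})$ has exactly the universal property that \cite{KPS2} uses to characterise $(C^*(\Lambda,c_\sigma),\{\text{canonical family}\})$.

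Finally I would assemble the isomorphism. Writing $\{u_\lambda\}$ for the canonical $(\Lambda,c_\sigma)$-family generating $C^*(\Lambda,c_\sigma)$, the converse half of Theorem~\ref{twistgraphFell} applied to $\{u_\lambda\}$ gives a homomorphism $\rho\colon C^*(B(\sigma))\to C^*(\Lambda,c_\sigma)$ with $\rho(t_\lambda)=u_\lambda$, and the universal property of $C^*(\Lambda,c_\sigma)$ applied to the $(\Lambda,c_\sigma)$-family $\{t_\lambda\}$ gives a homomorphism $\psi\colon C^*(\Lambda,c_\sigma)\to C^*(B(\sigma))$ with $\psi(u_\lambda)=t_\lambda$. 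Both composites $\psi\circ\rho$ and $\rho\circ\psi$ fix the respective canonical generators, and since the $t_\lambda t_\mu^*$ span a dense subset of $C^*(B(\sigma))$ (by \eqref{usual2}) and the $u_\lambda u_\mu^*$ span a dense subset of $C^*(\Lambda,c_\sigma)$, both composites are the identity. Thus $\psi$ is an isomorphism carrying the canonical $(\Lambda,c_\sigma)$-family onto $\{t_\lambda\}=\{\pi^\sigma_{\eta(\lambda)}(s_\lambda)\}$, as required.

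I do not expect a genuine obstacle here: the content is all in Theorem~\ref{twistgraphFell}. The only care needed is bookkeeping --- checking that the conventions of \cite[Definition~5.2]{KPS2} for a $(\Lambda,c_\sigma)$-family (placement of the cocycle, orientation of products, normalisation) line up verbatim with (CK1--4), and that \cite{KPS2} indeed defines $C^*(\Lambda,c_\sigma)$ by this universal property rather than, say, via a twisted path groupoid (in which case one would first invoke their own uniqueness theorem to pass to the universal description before applying the argument above).
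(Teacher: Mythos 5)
Your proposal is correct and follows essentially the same route as the paper: the paper likewise observes that $c_\sigma$ is a categorical $2$-cocycle, that $\{t_\lambda\}$ is a $(\Lambda,c_\sigma)$-family, and that Theorem~\ref{twistgraphFell} exhibits $(C^*(B(\sigma)),\{t_\lambda\})$ as universal for such families, whence the isomorphism with $C^*(\Lambda,c_\sigma)$ is immediate. Your extra care in checking the normalisation and cocycle identity for $c_\sigma$ and in writing out the two mutually inverse homomorphisms is just a fuller account of the same argument.
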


\begin{ex}
For any $k$-graph $\Lambda$, we can apply Theorem~\ref{twistgraphFell} to the degree functor $d:\Lambda\to \N^k\subset \Z^k$. Then $B$ is a Fell bundle over $\Z^k$ in which $B_0$ is the usual core $C^*(\Lambda)^\gamma$, and 
\[
B_n=\clsp\{s_\lambda s_\mu^*:d(\lambda)-d(\mu)=n\}.
\]
We can then deform $B$ using the cocycles $\sigma_A$ on $\Z^k$ described in Example~\ref{cocyclesonZ}.
\end{ex}

\begin{ex}\label{twistedgpalgs}
Suppose that $\Lambda$ is the unique $k$-graph with one vertex $v$ and $k$ loops $\lambda_i$ satisfying $d(\lambda_i)=e_i$, and $A=(a_{ij})$ is as in the preceding example. Then $p_v$ is an identity for $C^*(B(\sigma_A))$, and (CK3--4) say that the $t_{\lambda_i}$ are unitary. The relation (CK2) then reduces to
\begin{equation}\label{desnoncommT}
t_{\lambda_i}t_{\lambda_j}=\exp(2\pi ia_{ij})t_{\lambda_i\lambda_j}=\exp(2\pi ia_{ij})t_{\lambda_j}t_{\lambda_i}\text{ for $i<j$.}
\end{equation}
Thus the universal property of $C^*(B(\sigma_A))=C^*(t_{\lambda_i})$ is that of the twisted group $C^*$-algebra $C^*(\Z^k,\sigma_A)$, which is commonly known as a ``noncommutative torus''. (This is slightly different from the description in \cite[Example~7.9]{KPS1}, but only because we are  parametrising $H^2(\Z^k,\T)$ by upper triangular matrices rather than skew-symmetric ones. With our convention, \eqref{desnoncommT} reduces to the usual commutation relation $t_{\lambda_1}t_{\lambda_2}=e^{2\pi i\theta}t_{\lambda_2}t_{\lambda_1}$ when $k=2$.) The other main examples in \cite{KPS1} can also be obtained from the degree functor and cocycles on $\Z^2$.
\end{ex}

\begin{ex}
For every $k$-graph $\Lambda$ and $v\in \Lambda^0$, there is a functor $\eta$ of $\Lambda$ into the fundamental group $\pi_1(\Lambda,v)$ such that the skew-product $k$-graph $\Lambda\times_\eta \pi_1(\Lambda,v)$ is a universal covering of $\Lambda$ (see \cite[Corollary~6.5]{PQR}). So with $B$ as in Proposition~
\ref{labeltoFell}, we can deform $C^*(\Lambda)$ by cocycles on $\pi_1(\Lambda,v)$. However, $\pi_1(\Lambda,v)$  may not have interesting cocycles. For example, if $E$ is the $1$-graph with one vertex $v$ and two edges $e,f$, then $\pi_1(E,v)$ is the free group on $E^1=\{e,f\}$, and hence has no nontrivial cocycles (because every central extension of a free group splits). But a functor into $G$ gives functors into every quotient of $G$, and these quotients can have nontrivial cocycles. For example, with $E$ as above, there is a functor $\eta:E^*\to \Z^2$ such that $\eta(e)=(1,0)$ and $\eta(f)=(0,1)$, and then we can deform the associated Fell bundle using cocycles on $\Z^2$. 
\end{ex}

\section{Uniqueness theorems and amenability}\label{sec:unique}

Here we prove versions of the two main uniqueness theorems for graph algebras. Our gauge-invariant uniqueness theorem is slightly stronger than the usual one \cite[Theorem~3.4]{KP}, in that our continuity hypothesis on the action $\beta$ is a bit weaker. We use the extra strength in our proof of amenability of the deformed Fell bundles (Corollary~\ref{amenability}).

\begin{thm}[The gauge-invariant uniqueness theorem]\label{giut}
Suppose that $\Lambda$ is a graph of rank $k$, $\eta:\Lambda\to G$ is a functor from $\Lambda$ into a group $G$, and $B$ is the Fell bundle of Proposition~\ref{labeltoFell}, with fibres $B_g$ given by \eqref{defBg}. Suppose that $\sigma\in Z^2(G,\T)$ is a cocycle. Suppose there is a representation $\rho$ of $B(\sigma)$ in a multiplier algebra $M(A)$, and that there is a strictly continuous action $\alpha$ of $\T^k$ on $M(A)$ such that $\rho_g\circ\beta_z=\alpha_z\circ \rho_g$ for all $z\in \T^k$ and $g\in G$. If $\rho_e(p_v)\not=0$ for every $v\in \Lambda^0$, then the corresponding representation $\rho$ of $C^*(B(\sigma))$ is faithful.
\end{thm}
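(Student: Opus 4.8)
The plan is to follow the standard strategy for gauge-invariant uniqueness theorems, adapted to the Fell-bundle setting. Since $\rho$ is already known to be surjective onto its image (its range is a $C^*$-algebra containing all the $\rho_{\eta(\lambda)}(s_\lambda)$), the content is injectivity. The key reduction is that a gauge-equivariant homomorphism is injective as soon as it is injective on the fixed-point algebra: averaging over the action $\beta$ produces a faithful conditional expectation $\Phi$ from $C^*(B(\sigma))$ onto the core $C^*(B(\sigma))^\beta$, and similarly $\alpha$ gives an expectation $\Phi'$ on $M(A)$ (or rather on $\overline{\rho(C^*(B(\sigma)))}$); equivariance gives $\Phi' \circ \rho = \rho \circ \Phi$. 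A standard argument then shows $\ker\rho = 0$ provided $\rho$ is faithful on the core. The weakening of the continuity hypothesis (only strict continuity of $\alpha$ on $M(A)$, rather than norm-continuity) is handled by noting that $\alpha$ restricts to a norm-continuous action on the $C^*$-subalgebra generated by $\rho(C^*(B(\sigma)))$, because the generators $\rho_g(b)$ are moved continuously and norm-continuity on a generating set propagates; alternatively one works throughout with the nondegenerate representation on the underlying Hilbert space and checks that $\Phi'$ is well-defined and faithful there.

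The main work is therefore to show $\rho$ is faithful on the core $C^*(B(\sigma))^\beta = B(\sigma)_0$. Here I would use the structure established in Lemma~\ref{matrixunits} and the discussion preceding the end of the proof of Theorem~\ref{twistgraphFell}: the core is the increasing union $\overline{\bigcup_n D_n}$, where $D_n = \bigoplus_{v\in\Lambda^0} D(\sigma)_{n,v}$ and each $D(\sigma)_{n,v} = \clsp\{t_\lambda t_\mu^* : \lambda,\mu\in\Lambda^n v\}$ is (isomorphic to) a full matrix algebra on the index set $\Lambda^n v$, which is nonempty and finite since $\Lambda$ is row-finite with no sources. A homomorphism of a $c_0$-direct limit of direct sums of matrix algebras is injective if and only if it is injective on each matrix block, and since matrix algebras are simple, injectivity on the block $D(\sigma)_{n,v}$ is equivalent to $\rho$ not killing the identity $q_v = \sum_{\lambda\in\Lambda^n v} t_\lambda t_\lambda^*$ of that block — equivalently, to $\rho_e(p_v) \neq 0$. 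This is exactly the hypothesis. Compatibility of the blocks under the inclusions $D_m \subset D_n$ (which come from (CK4)) ensures injectivity passes to the union and hence to the closure.

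Assembling: $\rho_e(p_v)\neq 0$ for all $v$ gives injectivity of $\rho$ on each $D(\sigma)_{n,v}$, hence on $D_n$, hence on $\overline{\bigcup_n D_n} = C^*(B(\sigma))^\beta$; equivariance plus faithfulness of the averaging expectations upgrades this to injectivity of $\rho$ on all of $C^*(B(\sigma))$. The step I expect to be the genuine obstacle — the only place requiring real care rather than bookkeeping — is setting up the conditional expectation $\Phi'$ and verifying $\Phi'\circ\rho = \rho\circ\Phi$ under the weaker (merely strict) continuity of $\alpha$ on $M(A)$: one must check that $z\mapsto \alpha_z(\rho(a))$ is norm-continuous for $a\in C^*(B(\sigma))$ (so that $\Phi'(\rho(a)) = \int_{\T^k}\alpha_z(\rho(a))\,dz$ makes sense as a Bochner integral in the $C^*$-algebra $\overline{\rho(C^*(B(\sigma)))} \subset M(A)$, not merely as a strict integral), and that the resulting $\Phi'$ is faithful. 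Norm-continuity follows by the usual $\epsilon/3$ argument from the fact that $\beta$ acts continuously and $\rho$ intertwines, reducing to the generators $t_\lambda t_\mu^*$ on which $z\mapsto \alpha_z(\rho(t_\lambda t_\mu^*)) = z^{d(\lambda)-d(\mu)}\rho(t_\lambda t_\mu^*)$ is manifestly continuous; faithfulness of $\Phi'$ is automatic since it is an averaging of an action by automorphisms on a $C^*$-algebra.
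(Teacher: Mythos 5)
Your proposal is correct and follows essentially the same route as the paper: faithfulness on the core is obtained exactly as in the paper from the matrix units of Lemma~\ref{matrixunits} and the hypothesis $\rho_e(p_v)\neq 0$, and the passage from the core to all of $C^*(B(\sigma))$ is the standard conditional-expectation argument. The only minor (and valid) divergence is at the strict-continuity step: you establish norm-continuity of $z\mapsto\alpha_z(\rho(a))$ on the image of $\rho$ by an $\epsilon/3$ argument, whereas the paper sidesteps this by estimating matrix coefficients $\big|\big(\rho(\Phi(a))h\,|\,k\big)\big|$ in a faithful Hilbert-space representation; both yield the key inequality $\|\rho(\Phi(a))\|\leq\|\rho(a)\|$.
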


\begin{proof}
For each $v\in \Lambda^0$ and $n\in \Z^k$, we know from Lemma~\ref{matrixunits} that $\{t_\lambda t_\mu^*:\lambda,\mu\in \Lambda^nv\}$ is a set of nonzero matrix units.
Since the projections $\rho_e(p_v)$ are all nonzero, $\rho_e$ is faithful on each 
\[
B_{v,n}:=\clsp\{t_\lambda t_\mu^*:\lambda,\mu\in \Lambda^nv\} \cong \KK(\ell^2(\Lambda^nv)).
\]
We have $B_{v,n}B_{w,n}=0$ for $v\not= w$, so $B_n:=\clsp\{t_\lambda t_\mu^*:\lambda,\mu\in \Lambda^n\}$
is the direct sum of the subalgebras $B_{v,n}$, and $\rho_e$ is faithful on $B_n$. We deduce that $\rho_e$ is faithful on $C^*(B(\sigma))^\beta=\overline{\bigcup_{n\in \N^k}B_n}$.

We now want to run the standard argument, as in \cite[pages~28--29]{R}, for example. However, we need to do things in the right order because the action is only strictly continuous (see the footnote on page~278 of \cite{tfb}). As usual, we consider the expectation $\Phi$ of $C^*(B(\sigma))$ onto $C^*(B(\sigma))^\beta$ given by $\Phi(a)=\int_{\T^k} \beta_z(a)\,dz$. We choose a faithful nondegenerate representation of $C^*(B(\sigma))$ as operators on a Hilbert space $H$; now the map $z\mapsto \gamma_z(a)h$ is norm-continuous for each $h\in H$. Next we take $a\in C^*(B(\sigma))$ and $h,k\in H$, and calculate, using the $C^*$-algebra-valued integrals in \cite[Lemmas~C.3 and~C.11]{tfb}: 
\begin{align*}
\big|\big(\rho(\Phi(a))h\,|\,k\big)\big|
&=\Big|\Big(\rho\Big(\int_{\T^k} \beta_z(a)\,dz\Big)h\;|\;k\Big)\Big|
=\Big|\int_{\T^k}\big(\rho(\beta_z(a))h\;|\;k\big)\,dz\Big|\\
&\leq \int_{\T^k}\big|\big(\rho(\beta_z(a))h\;|\;k\big)\big|\,dz
= \int_{\T^k}\big|\big(\alpha_z(\rho(a))h\;|\;k\big)\big|\,dz.
\end{align*}
For each $z$, we have
\[
\big|\big(\alpha_z(\rho(a))h\;|\;k\big)\big|\leq \|\alpha_z(\rho(a))\|\,\|h\|\,\|k\|
=\|\rho(a)\|\,\|h\|\,\|k\|.
\]
Thus 
\[
\big|\big(\rho(\Phi(a))h\,|\,k\big)\big|\leq \|\rho(a)\|\,\|h\|\,\|k\| \text{ for all $h,k\in H$,}
\]
and we have $\|\rho(\Phi(a))\|\leq \|\rho(a)\|$. With this inequality, we can return to the standard argument (as in the display (3.11) in \cite{R}, for example.)
\end{proof}

We will show as a corollary of Theorem~\ref{giut} that the bundles $B(\sigma)$ are amenable. For this, we need the following standard lemma, which is implicitly contained in \cite[\S1]{C} and \cite[Proposition~2.27]{EKQR}, for example.

\begin{lem}\label{indaction}
Suppose that $A$ and $C$ are $C^*$-algebras and ${}_AX_C$ is an imprimitivity bimodule. Suppose we have a continuous action $\beta$ of a locally compact group $G$ on $C$, and a homomorphism $u$ of $G$ into the group of invertible linear transformations from $X$ to $X$ such that $u_g(x\cdot c)=u_g(x)\cdot c$, $\langle u_g(x),u_g(y)\rangle_C=\beta_g(\langle x,y\rangle_C)$ and $g\mapsto u_g(x)$ is continuous for each $x\in X$. Then there is a strictly continuous action $\alpha$ of $G$ on $M(A)$ such that $\alpha_g(m)\cdot x =u_g(m\cdot u_g^{-1}(x))$.
\end{lem}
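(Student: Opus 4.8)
The plan is to build the action $\alpha$ on $M(A)$ by hand, using the imprimitivity bimodule ${}_AX_C$ to transport the given action $\beta$ on $C$ and the transformations $u_g$ on $X$ to multipliers of $A$. First I would recall that $M(A)$ can be identified with the algebra $\LL(X_C)$ of $C$-linear adjointable operators on $X$ (via the isomorphism $A\cong\KK(X_C)$ that comes with the imprimitivity bimodule, which identifies $M(A)$ with $M(\KK(X_C))=\LL(X_C)$). Under this identification, the formula in the statement, $\alpha_g(m)\cdot x=u_g\big(m\cdot u_g^{-1}(x)\big)$, is exactly conjugation of an adjointable operator by the linear map $u_g$. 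So the first step is to check that $u_g$ really does conjugate $\LL(X_C)$ into itself: if $m\in\LL(X_C)$ then $u_g\circ m\circ u_g^{-1}$ is $C$-linear because $u_g(x\cdot c)=u_g(x)\cdot c$, and it is adjointable with adjoint $u_g\circ m^*\circ u_g^{-1}$ because $u_g$ preserves the $C$-valued inner product up to applying $\beta_g$ — here one uses that $\beta_g$ is a $*$-automorphism of $C$, so $\langle u_g m u_g^{-1}x,y\rangle_C=\beta_g\langle m u_g^{-1}x,u_g^{-1}y\rangle_C=\beta_g\langle u_g^{-1}x,m^*u_g^{-1}y\rangle_C=\langle x,u_g m^* u_g^{-1}y\rangle_C$.

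Next I would verify that $\alpha:G\to\Aut M(A)$ is a homomorphism: this is immediate from $u_{gh}=u_g u_h$ (up to the usual care that $u$ is a homomorphism into invertible linear maps, so $u_g^{-1}=u_{g^{-1}}$), and that each $\alpha_g$ is a $*$-automorphism follows from the adjointability computation above together with the fact that $\alpha_g$ is multiplicative in $m$ (conjugation is always multiplicative). Then comes the continuity claim. Strict continuity of $\alpha$ means that for each $a\in A\cong\KK(X_C)$ the maps $g\mapsto\alpha_g(m)a$ and $g\mapsto a\alpha_g(m)$ are norm-continuous into $A$; equivalently, since $\KK(X_C)$ is generated by the rank-one operators $\Theta_{x,y}$ with $\Theta_{x,y}(z)=x\cdot\langle y,z\rangle_C$, it suffices to prove $g\mapsto\alpha_g(m)\Theta_{x,y}$ and $g\mapsto\Theta_{x,y}\alpha_g(m)$ are continuous for each $x,y\in X$ and each fixed $m$. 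A short computation shows $\alpha_g(m)\Theta_{x,y}=\Theta_{u_g(m\cdot u_g^{-1}(x)),\,u_g(\cdot)}$-type expression; more precisely $\alpha_g(m)\Theta_{x,y}$ applied to $z$ is $u_g\big(m\,u_g^{-1}(x)\big)\cdot\langle y,z\rangle_C$, so $\alpha_g(m)\Theta_{x,y}=\Theta_{u_g(m\,u_g^{-1}(x)),\,y}$. Since $g\mapsto u_g(w)$ is continuous for each fixed $w\in X$ by hypothesis, and one checks $g\mapsto u_g^{-1}(x)=u_{g^{-1}}(x)$ is continuous by the same hypothesis applied at $g^{-1}$, and $m$ is a bounded operator, the map $g\mapsto u_g\big(m\,u_g^{-1}(x)\big)$ is continuous in $X$; composing with $w\mapsto\Theta_{w,y}$, which is norm-continuous and bounded, gives norm-continuity of $g\mapsto\alpha_g(m)\Theta_{x,y}$ in $\KK(X_C)$. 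The argument for right multiplication $\Theta_{x,y}\alpha_g(m)$ is symmetric, using $\Theta_{x,y}\alpha_g(m)=\Theta_{x,\,u_g(m^*u_g^{-1}(y))}$.

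The main obstacle is this last continuity step, and within it the one genuinely delicate point: the hypothesis only gives pointwise continuity of $g\mapsto u_g(x)$, not any uniform estimate, and $u_g$ need not be isometric (only $\beta_g$-isometric, which still forces each $u_g$ to be bounded since $\|u_g(x)\|^2=\|\langle u_g x,u_g x\rangle_C\|=\|\beta_g\langle x,x\rangle_C\|=\|x\|^2$ — so in fact each $u_g$ is isometric, a pleasant bonus). With each $u_g$ isometric, uniform boundedness is automatic and no Banach–Steinhaus argument is needed, so the continuity of $g\mapsto u_g(m\,u_g^{-1}(x))$ reduces to an $\epsilon/3$ argument splitting $u_g m u_g^{-1}x - u_h m u_h^{-1}x$ as $(u_g - u_h)(m\,u_h^{-1}x) + u_g\,m\,(u_h^{-1}x - u_g^{-1}x)$ and controlling each piece using pointwise continuity of $u_\bullet$ at the fixed vectors $m\,u_h^{-1}x$ and $x$. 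I would remark, finally, that the references \cite[\S1]{C} and \cite[Proposition~2.27]{EKQR} contain this construction in essentially this form, so the proof can reasonably be presented as a sketch with a pointer, but I would still spell out the identification $M(A)=\LL(X_C)$ and the verification that $\alpha_g(m)\cdot x=u_g(m\cdot u_g^{-1}(x))$ lands in $M(A)$, since those are the load-bearing points for the later application in Corollary~\ref{amenability}.
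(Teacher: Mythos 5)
Your proof is correct, and it is essentially the standard argument; the paper gives no proof of Lemma~\ref{indaction} at all, merely citing \cite{C} and \cite[Proposition~2.27]{EKQR}, so you are supplying precisely the details the author delegates to the references: identify $M(A)$ with $\LL(X_C)$ via $A\cong\KK(X_C)$, check that conjugation by $u_g$ preserves adjointability with $\alpha_g(m)^*=\alpha_g(m^*)$, and deduce strict continuity from $\alpha_g(m)\Theta_{x,y}=\Theta_{u_g(mu_g^{-1}(x)),\,y}$ together with the isometry of each $u_g$ (your observation that $\|u_g(x)\|^2=\|\beta_g(\langle x,x\rangle_C)\|=\|x\|^2$ is exactly the right way to avoid any uniform-boundedness issue). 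Two small remarks. First, the hypothesis $u_g(x\cdot c)=u_g(x)\cdot c$, which you invoke verbatim to see that $u_g\circ m\circ u_g^{-1}$ is $C$-linear, cannot be meant literally: combined with $\langle u_g(x),u_g(y)\rangle_C=\beta_g(\langle x,y\rangle_C)$ and the fullness of the right inner product of an imprimitivity bimodule, it would force $\beta_g=\id$, whereas in the application (Corollary~\ref{amenability}) one has $u_z(x\cdot c)=u_z(x)\cdot\beta_z(c)$ with $\beta_z|_{B_e}$ nontrivial. The intended hypothesis is this $\beta$-twisted covariance, and your argument survives the correction essentially unchanged: $u_g\circ m\circ u_g^{-1}$ is still $C$-linear because the twists by $\beta_g^{-1}$ and $\beta_g$ cancel, and the identity $\alpha_g(m)\Theta_{x,y}=\Theta_{u_g(mu_g^{-1}(x)),\,y}$ still holds. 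Second, your telescoping decomposition has a sign slip (the second term should be $u_g\,m\,(u_g^{-1}x-u_h^{-1}x)$, not $u_g\,m\,(u_h^{-1}x-u_g^{-1}x)$); this is harmless since only norms enter the estimate, but the identity as written does not sum to $u_gmu_g^{-1}x-u_hmu_h^{-1}x$.
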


\begin{cor}\label{amenability}
Suppose we have $\Lambda$, $\eta$, $B$ and $\sigma$ as in Theorem~\ref{giut}. Then the Fell bundle $B(\sigma)$ is amenable in the sense of Exel \cite{E}.
\end{cor}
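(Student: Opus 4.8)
The goal is to show that the left regular representation of $B(\sigma)$ is faithful on $C^*(B(\sigma))$, and the natural strategy is to feed the left regular representation into the gauge-invariant uniqueness theorem (Theorem~\ref{giut}). The reduced algebra $C^*_r(B(\sigma))$ comes with its own left regular representation $\lambda$ of the bundle on $\ell^2(B(\sigma))$, and with a canonical quotient map $q:C^*(B(\sigma))\to C^*_r(B(\sigma))$ intertwining $\pi^\sigma$ and $\lambda$. So I would aim to apply Theorem~\ref{giut} to the representation $\rho:=\lambda$ (or rather the representation of the bundle whose integrated form is $q$), realised inside $M(A)$ for $A=C^*_r(B(\sigma))$ via the identity inclusion $A\hookrightarrow M(A)$.

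The two hypotheses of Theorem~\ref{giut} to check are: (i) $\rho_e(p_v)\neq 0$ for every $v\in\Lambda^0$; and (ii) there is a strictly continuous action $\alpha$ of $\T^k$ on $M(A)$ with $\rho_g\circ\beta_z=\alpha_z\circ\rho_g$. For (i): the left regular representation of any Fell bundle is isometric on $B_e$ (this is stated in \S2 of the excerpt, using the action of $B_e$ on the summand $B_e\subset\ell^2(B(\sigma))$ by left multiplication), so $\lambda_e$ is faithful on $B(\sigma)_e=B_e$, and in particular $\lambda_e(p_v)\neq 0$. For (ii): this is where Lemma~\ref{indaction} is designed to be used. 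The module $\ell^2(B(\sigma))$ is the standard Hilbert $B_e$-module, and $C^*_r(B(\sigma))$ acts as adjointable operators on it; in fact $\ell^2(B(\sigma))$ is an imprimitivity bimodule between (a full corner of) the compacts — more precisely, between $C^*_r(B(\sigma))$-relevant algebra and $B_e$, but the clean way is: the Fell-bundle automorphisms $\gamma_z|_{B_g}$ constructed in the proof of Proposition~\ref{gaugeaction} act diagonally on $\ell^2(B(\sigma))=\bigoplus_g B_g$, and since each $\gamma_z|_{B_g}$ is isometric and $\gamma_z|_{B_e}$ is an automorphism of $B_e$, we get a unitary-like transformation $u_z$ of the module $\ell^2(B(\sigma))$ that is $B_e$-linear up to the automorphism $\gamma_z|_{B_e}$ of the coefficient algebra. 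Lemma~\ref{indaction} then hands us the induced strictly continuous action $\alpha$ on $M(C^*_r(B(\sigma)))$, and one checks on the generators $\lambda(b)$, $b\in B_g$, that $\alpha_z(\lambda(b))=\lambda(\gamma_z(b))$, which is exactly the intertwining $\rho_g\circ\beta_z=\alpha_z\circ\rho_g$ transported through the isomorphism between $\beta$ and the bundle automorphism action.

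Granting (i) and (ii), Theorem~\ref{giut} says the integrated representation $q:C^*(B(\sigma))\to C^*_r(B(\sigma))\subset M(C^*_r(B(\sigma)))$ is faithful, which is precisely the statement that $B(\sigma)$ is amenable. The main obstacle, and the place I would spend the most care, is item (ii): one must correctly identify $\ell^2(B(\sigma))$ as an imprimitivity bimodule (or at least set things up so Lemma~\ref{indaction} applies — it is stated for an imprimitivity bimodule ${}_AX_C$, so I would take $C=B_e$ and $A$ the closure of the ``$A$-valued inner product'' algebra $\KK(\ell^2(B(\sigma)))$, noting $C^*_r(B(\sigma))$ sits inside $M(A)=\LL(\ell^2(B(\sigma)))$), verify the three compatibility conditions on $u_z=\bigoplus_g\gamma_z|_{B_g}$ with respect to the action $\gamma|_{B_e}$ on $C=B_e$, and confirm the continuity $z\mapsto u_z(x)$ from the $\epsilon/3$-continuity already established for $\beta$. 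The rest is bookkeeping: checking that the induced $\alpha$ really does intertwine on generators, and that ``$\rho$ faithful'' for this $\rho$ is the definition of amenability.
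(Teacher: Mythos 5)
Your proposal is correct and follows essentially the same route as the paper: apply the gauge-invariant uniqueness theorem to the left regular representation, using Lemma~\ref{indaction} with $u_z=\bigoplus_g\gamma_z|_{B_g}$ acting on $\ell^2(B)$ to produce the strictly continuous action on $M(\KK(\ell^2(B)))=\LL(\ell^2(B))$, checking the intertwining on the generators $s_\lambda s_\mu^*$, and noting that $\lambda_e(p_v)\neq 0$ because the left regular representation is isometric on $B_e$. Your closing resolution of where to place $A$ (taking $A=\KK(\ell^2(B(\sigma)))$ so that the reduced algebra sits inside $M(A)=\LL(\ell^2(B(\sigma)))$) is exactly the paper's setup.
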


\begin{proof}
We saw in the proof of Proposition~\ref{gaugeaction} that the gauge action $\gamma_z$ on $C^*(\Lambda)$ maps each $B_g$ isometrically into itself, and that the gauge action respects the structure of the twisted bundle $B(\sigma)$. Thus the $u^g_z:=\gamma_z|_{B_g}$ combine to give a linear automorphism $u_z$ of $\ell^2(B)$ which is compatible in the sense of Lemma~\ref{indaction} with the restriction $\beta_z|_{B_e}$. Thus there is a strictly continuous action $\alpha$ of $\T^k$ on $M(\KK(\ell^2(B)))=\LL(\ell^2(B))$ such that $\alpha_z(m)\cdot b= u_z(m\cdot u_{z^{-1}}(b))$ for $b\in B_g$. Now we take $m=s_\lambda s_\mu^*$ and $b=s_\nu s_\tau^*$ and check that the left regular representation $\lambda$ satisfies 
\begin{align*}
(\alpha_z(\lambda_{\eta(\lambda)\eta(\mu)^{-1}}s_\lambda s_\mu^*))\cdot b&=u_z((s_\lambda s_\mu^*)\cdot_\sigma u_z(s_\nu s_\tau^*))\\
&=z^{d(\lambda)-d(\mu)+d(\nu)-d(\tau)}((s_\lambda s_\mu^*)\cdot_\sigma z^{-(d(\nu)-d(\tau))}(s_\nu s_\tau^*))\\
&=(z^{d(\lambda)-d(\mu)}s_\lambda s_\mu^*)\cdot_\sigma (s_\nu s_\tau^*),
\end{align*}
which implies that $\alpha_z(\lambda_{g}(a))=\lambda_g(\beta_z(a))$. Since the action of each vertex projection $p_v$ on the summand $B_e$ of $\ell^2(B)$ is nonzero, we have $\lambda_e(p_v)\not=0$. Thus Theorem~\ref{giut} implies that the associated representation $\lambda$ of $C^*(B(\sigma))$ in $\LL(\ell^2(B))$ is injective. Thus it is an isomorphism of $C^*(B(\sigma))$ onto $C^*_r(B(\sigma))$, which means that $B(\sigma)$ is amenable.
\end{proof}

We now give a proof of Cuntz--Krieger uniqueness. Suppose that $\Lambda$ is a row-finite $k$-graph  with no sources, and recall Robertson and Sims' finite-path formulation of aperiodicity \cite[Lemma~3.2]{RS}: $\Lambda$ is \emph{aperiodic} if for every $v\in \Lambda^0$ and every $m,n\in\N^k$ with $m\not= n$, there exists $\lambda\in v\Lambda$ such that $d(\lambda)\geq m\vee n$ and
\[
\lambda(m,m+d(\lambda)-(m\vee n))\not=\lambda(n,n+d(\lambda)-(m\vee n)).
\]

\begin{thm}[The Cuntz--Krieger uniqueness theorem]\label{CKthm}
Suppose that $\Lambda$ is a row-finite $k$ graph with no sources, and that $\Lambda$ is aperiodic. Suppose that $\eta:\Lambda\to G$ is a functor and $\sigma\in Z^2(G,\T)$, and $B$ is the Fell bundle of Proposition~\ref{labeltoFell}. Suppose that $\{T_\lambda:\lambda\in \Lambda\}$ are elements of a $C^*$-algebra $A$ satisfying (CK1--4), and that each $Q_v:=T_v$ is nonzero. Then the corresponding homomorphism $\pi_T:C^*(B(\sigma))\to A$ is injective.
\end{thm}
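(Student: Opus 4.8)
The plan is to reduce the Cuntz--Krieger uniqueness theorem to the gauge-invariant uniqueness theorem (Theorem~\ref{giut}) by the standard trick of manufacturing a gauge action on the image, using aperiodicity to show that the expectation onto the core is implemented by a conditional expectation that $\pi_T$ respects. Concretely, let $\pi_T:C^*(B(\sigma))\to A$ be the homomorphism of Theorem~\ref{twistgraphFell}; since each $Q_v=T_v\neq 0$, it suffices by Theorem~\ref{giut} to produce a strictly continuous action $\alpha$ of $\T^k$ on $M(B)$, where $B:=\overline{\pi_T(C^*(B(\sigma)))}$ (or on $M(A)$), intertwining $\beta$ with $\alpha$ via $\pi_T$. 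The obstruction to building such an $\alpha$ directly is that $\pi_T$ need not be injective, so one cannot simply transport $\beta$. The resolution is the usual one: show that $\ker\pi_T$ is a $\beta$-invariant ideal on which $\Phi$ (the canonical expectation onto the core) vanishes, hence $\beta$ descends to the quotient $C^*(B(\sigma))/\ker\pi_T\cong B$, giving the required action.

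First I would record the core computation: by \eqref{usual2} the linear span of $\{t_\lambda t_\mu^*\}$ is dense in $C^*(B(\sigma))$, and applying $\Phi(a)=\int_{\T^k}\beta_z(a)\,dz$ to $t_\lambda t_\mu^*$ kills it unless $d(\lambda)=d(\mu)$; thus $\Phi$ maps the dense $*$-subalgebra onto $\lsp\{t_\lambda t_\mu^*:d(\lambda)=d(\mu)\}$, which is dense in the core $C^*(B(\sigma))^\beta$. Next I would show $\Phi$ is faithful on positive elements and that $\pi_T$ is isometric on the core: by Lemma~\ref{matrixunits}, for fixed $v,n$ the set $\{t_\lambda t_\mu^*:\lambda,\mu\in\Lambda^nv\}$ is a system of nonzero matrix units spanning a copy of $\KK(\ell^2(\Lambda^nv))$, and \eqref{matrix} together with (CK1--4) shows $\{T_\lambda T_\mu^*:\lambda,\mu\in\Lambda^nv\}$ is a system of matrix units in $A$; since each $Q_v\neq 0$, each $T_\lambda T_\mu^*\neq 0$, so $\pi_T$ restricts to an isomorphism on each $B_{v,n}$, hence on $B_n=\bigoplus_v B_{v,n}$, and (taking closures of the nested union over $n\in\N^k$) an isomorphism on the core $C^*(B(\sigma))^\beta$.

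The heart of the argument, and the main obstacle, is the aperiodicity estimate: I must show that for any $a$ in the dense $*$-subalgebra and any $\varepsilon>0$ there is a projection $q\leq q_v$ (a sum of vertex projections, or better a range projection $t_\lambda t_\lambda^*$ for a suitable long path $\lambda$) such that $\|q\Phi(a)q\|\geq \|\Phi(a)\|-\varepsilon$ while $qaq=q\Phi(a)q$; this is exactly the content of Robertson--Sims aperiodicity as used in the untwisted case (see the proof of \cite[Theorem~3.4 or the Cuntz--Krieger uniqueness theorem]{R} or \cite[Lemma~3.2]{RS}). The twisting only introduces scalars of modulus one into \eqref{usual2}, so the combinatorial heart — finding, for each off-diagonal term $t_\lambda t_\mu^*$ appearing in $a$, a long path along which $\lambda$ and $\mu$ disagree so that the corresponding cut-down vanishes — goes through verbatim, and the scalar coefficients are harmless because one only estimates norms. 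From this one gets $\|\Phi(a)\|\leq \|\pi_T(a)\|$ for $a$ in the dense subalgebra, hence $\|\Phi(a)\|\leq\|\pi_T(a)\|$ for all $a$ by continuity, and in particular $\ker\pi_T\subseteq\ker\Phi$.

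Finally I would assemble the pieces. Since $\Phi$ is faithful and $\ker\pi_T\subseteq\ker\Phi$, and $\ker\pi_T$ is an ideal, standard arguments give $\ker\pi_T=0$: if $a\in\ker\pi_T$ with $a\geq 0$ then $a^*a=a^2\in\ker\pi_T\subseteq\ker\Phi$, and $\Phi$ faithful forces $a=0$; alternatively, $\ker\pi_T$ is automatically $\beta$-invariant (since $\pi_T$ intertwines $\beta$ with the trivial-on-quotient action — more precisely, because $\Phi(\ker\pi_T)\subseteq\ker\pi_T\cap C^*(B(\sigma))^\beta=0$ by the isometry on the core), so $\beta$ descends to the quotient, Theorem~\ref{giut} applies to the induced faithful representation, and injectivity of $\pi_T$ follows. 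Either way, $\pi_T$ is injective, completing the proof.
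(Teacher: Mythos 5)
Your proposal is correct and follows essentially the same route as the paper: both reduce the theorem to the norm estimate $\|\pi_T(a)\|\geq\|\Phi(a)\|$ (the inequality of \cite[Proposition~6.4]{HRSW}), proved by using the Robertson--Sims aperiodicity condition to find a compression that kills the off-diagonal terms, and then combine it with faithfulness of $\pi_T$ on the core (from the matrix units and $Q_v\neq 0$) and faithfulness of $\Phi$. The only quibble is your claim that the combinatorial argument goes through ``verbatim'': the paper notes that one should define the compressing projections as $\sum_\alpha T_\alpha T_{\lambda_v}T_{\lambda_v}^*T_\alpha^*$ rather than $\sum_\alpha T_{\alpha\lambda_v}T_{\alpha\lambda_v}^*$ to keep stray cocycle factors out of the calculations, though this is a presentational adjustment rather than a substantive one.
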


\begin{proof}
We follow the proof of the usual Cuntz--Krieger uniqueness theorem in \cite[\S6]{HRSW}. The crux of that proof is the following inequality for finite sums, which is Proposition~6.4 in \cite{HRSW}:
\begin{equation}\label{keyest}
\Big\|\sum_{\mu,\nu\in F}c_{\mu,\nu}T_\mu T_\nu^*\Big\|\geq \Big\|\sum_{\mu,\nu\in F,\;d(\mu)=d(\nu)}c_{\mu,\nu}T_\mu T_\nu^*\Big\|.
\end{equation}
We follow the proof of that proposition to the point where we define projections $Q_v$. For the current proof, we take 
\begin{equation*}
Q_v:=\sum_{\alpha\in Gv,\;d(\alpha)=n} T_{\alpha}T_{\lambda_v}T_{\lambda_v}^*T_\alpha^*\quad\text{rather than}\quad \sum_{\alpha\in Gv,\;d(\alpha)=n} T_{\alpha\lambda_v}T_{\alpha\lambda_v}^*.
\end{equation*}
(In the situation of \cite{HRSW}, the two would be the same, but here we would encounter extra cocycles in the calculations.) With this change, the rest of the proof of \cite[Proposition~6.4]{HRSW} applies almost verbatim in our situation.

Now we have \eqref{keyest}, the proof of \cite[Theorem~6.1]{HRSW} carries over.
\end{proof}

Next, recall that $\Lambda$ is \emph{cofinal} if for every pair $v,w\in \Lambda^0$, there exists $n\in \N^k$ such that $v\Lambda s(\lambda)\not=\emptyset$ for every $\lambda\in w\Lambda^n$.

\begin{cor}\label{simplicity}
Suppose that $\Lambda$ is a row-finite $k$-graph without sources, and that $\Lambda$ is both aperiodic and cofinal. Suppose that $\eta$, $\sigma$ and $B$ are as in Theorem~\ref{CKthm}. Then $C^*(B(\sigma))$ is simple.
\end{cor}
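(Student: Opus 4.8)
The plan is to deduce simplicity of $C^*(B(\sigma))$ from the two uniqueness theorems already proved, following the standard template for graph algebras (as in \cite[Theorem~5.1]{HRSW} or \cite[\S4]{R}). By Corollary~\ref{appltoKPS} we may equally think of this as $C^*(\Lambda,c_\sigma)$, but it is cleaner to argue directly with the Fell-bundle picture. The key point is that a nonzero ideal $I$ of $C^*(B(\sigma))$ must contain a vertex projection, and then cofinality forces it to contain all of them, so $I=C^*(B(\sigma))$.

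First I would let $I$ be a nonzero ideal and consider the quotient map $q:C^*(B(\sigma))\to C^*(B(\sigma))/I=:A$, together with the family $\{T_\lambda:=q(t_\lambda)\}$, which satisfies (CK1--4) since $q$ is a homomorphism. By Theorem~\ref{CKthm}, if every $Q_v:=q(q_v)$ were nonzero then $q$ would be injective, forcing $I=0$; so some $q_v$ lies in $I$. The second step is to show that in fact \emph{every} vertex projection lies in $I$. Fix $w\in\Lambda^0$; apply cofinality to the pair $v,w$ to get $n\in\N^k$ with $v\Lambda s(\lambda)\neq\emptyset$ for every $\lambda\in w\Lambda^n$. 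For each such $\lambda$, pick $\mu_\lambda\in v\Lambda s(\lambda)$; then $t_{\mu_\lambda}^* t_{\mu_\lambda}=q_{s(\lambda)}$ by (CK3), and using (CK2) together with $q_v\in I$ one gets $t_{\mu_\lambda}=q_v\cdot(\text{scalar})\cdot t_{\mu_\lambda}\in I$ (after absorbing the cocycle scalar $\sigma(\eta(v),\eta(\mu_\lambda))=\sigma(e,\cdot)=1$), hence $q_{s(\lambda)}=t_{\mu_\lambda}^*t_{\mu_\lambda}\in I$. Now (CK4) gives $q_w=\sum_{\lambda\in w\Lambda^n}t_\lambda t_\lambda^*$, and since $q_{s(\lambda)}\in I$ we have $t_\lambda t_\lambda^*=t_\lambda q_{s(\lambda)}t_\lambda^*\in I$ for each $\lambda\in w\Lambda^n$ (this uses the strengthened relation \eqref{CKtoLeavitt}-type identity $t_\lambda=t_\lambda q_{s(\lambda)}$, valid for all $\lambda$ by (CK3)), so $q_w\in I$.

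For the final step, I would observe that every generator $t_\nu t_\tau^*$ of the dense $*$-subalgebra of $C^*(B(\sigma))$ satisfies $t_\nu t_\tau^*=q_{r(\nu)}\,t_\nu t_\tau^*\in I$, since $q_{r(\nu)}\in I$ by the previous step; as $I$ is closed and contains a dense subalgebra, $I=C^*(B(\sigma))$. This proves there are no nontrivial proper ideals, i.e. $C^*(B(\sigma))$ is simple. (One should remark that aperiodicity, hence Theorem~\ref{CKthm}, is needed only to start the argument by locating one vertex projection inside $I$; cofinality does the rest.)

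The main obstacle, and the only place one must be slightly careful, is bookkeeping the cocycle scalars: whenever we multiply two $t$'s we pick up a factor in $\T$, and when we compose paths through a vertex $v\in\Lambda^0$ the relevant cocycle value is $\sigma(\eta(v),\cdot)=\sigma(e,\cdot)=1$, so no scalar actually appears in the crucial steps $t_{\mu_\lambda}=t_v\cdot_{\text{(CK2)}}t_{\mu_\lambda}$ and $q_v=t_v t_v^*$. The identities \eqref{CK3+} and the consequences of (CK1)--(CK4) derived just after Lemma~\ref{compinbdle} handle everything else, so no genuinely new estimate is required; this is why the result is stated as a corollary.
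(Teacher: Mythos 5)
Your argument is correct and is essentially the paper's proof read contrapositively: the paper shows that a nonzero homomorphism kills no vertex projection (using cofinality to propagate nonvanishing from one $q_{s(\mu)}$ to every $q_v$) and then invokes Theorem~\ref{CKthm}, whereas you use Theorem~\ref{CKthm} to show a nonzero ideal must contain some $q_v$ and then run the same cofinality mechanism in the opposite direction to put every vertex projection, and hence the dense span of the $t_\nu t_\tau^*$, inside the ideal. Both proofs rest on exactly the two ingredients you identify, the Cuntz--Krieger uniqueness theorem and the symmetric cofinality condition, so there is nothing substantive to add.
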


\begin{proof}
We will show that every nonzero homomorphism out of $C^*(B(\sigma))$ is injective. So suppose $\pi:C^*(B(\sigma))\to A$ is nonzero. We claim that every $\pi(q_v)$ is nonzero. So fix $v\in \Lambda^0$. Since the $t_\mu$ generate $C^*(B(\sigma))$, there is at least one $\mu\in \Lambda$ such that $\pi(t_\mu)\not=0$, and then $\pi(q_{s(\mu)})=\pi(t_\mu^*t_\mu)\not=0$ also. Taking $w=s(\mu)$ in the definition of cofinal gives $n$ such that $v\Lambda s(\lambda)$ is nonempty for every $\lambda\in s(\mu)\Lambda^n$. The relation (CK4) at $s(\mu)$ implies that at least one $\pi(t_{\lambda})$ is nonzero. Now any $\nu \in v\Lambda s(\lambda)$ has $\pi(t_\nu)\not=0$, and hence $\pi(q_v)=\pi(q_{r(\nu)})\geq \pi (t_\nu t_{\nu^*})$ is nonzero, as claimed. Now Theorem~\ref{CKthm} implies that $\pi$ is injective.
\end{proof}

\begin{rmk}
For the graph whose skeleton consists of $k$ loops at a single vertex, we saw in Example~\ref{twistedgpalgs} that the twisted graph algebras associated to the degree functor are the twisted group algebras $C^*(\Z^k,\sigma_A)$. This graph has only one path of each degree, and hence is not aperiodic, so Corollary~\ref{simplicity} does not apply. However, $C^*(\Z^k,\sigma_A)$ is often simple. Indeed, a theorem of Slawny says that whenever $\sigma_A$ is \emph{totally skew} in the sense that $m\in \Z^k$ and $\sigma_A(m,n)=\sigma_A(n,m)$ for all $n\in \Z^k$ imply $m=0$, the twisted group algebra is simple \cite[Theorem~3.7]{S}. (Slawny's result was extended to locally compact groups by Kleppner \cite{K}, and subsequently extended further by Green in \cite[Proposition~32]{G}.) The cocycles $\sigma_A$ are generically totally skew, and in particular whenever the entries $a_{ij}$ of $A$ are independent over the rationals. 
\end{rmk}

\section{Continuous bundles from deformed Fell bundles}\label{sec:field}

There are several theorems in the literature which say, roughly, that the various twisted crossed products associated to continuously varying cocycles are the fibres of a continuous $C^*$-bundle \cite{AP, Rie, W}. We next prove such a theorem in the context of Fell bundles.  For the twisted graph algebras of Kumjian, Pask and Sims \cite{KPS2}, a similar bundle was decribed in \cite{KPS3} (see Remark~\ref{KPSex}). However, our more general result suggests that Fell bundles provide an excellent environment for such theorems (see Remark~\ref{repgp}).

\begin{thm}\label{existbdle1}
Suppose that $p:B\to G$ is a Fell bundle over an amenable group and $\sigma\in Z^2(G,Z)$ is a cocycle with values in an abelian group $Z$. Then there is a continuous $C^*$-bundle $E$ over $\widehat Z$ with fibres $E_\gamma=C^*(B(\gamma\circ\sigma))$.
\end{thm}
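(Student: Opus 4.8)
The plan is to realise all of the deformed algebras $C^*(B(\gamma\circ\sigma))$ as quotients of a single ``universal'' $C^*$-algebra built from the central extension of $G$ by $Z$, and then use amenability to transfer the known continuity statement for group-algebra fibres to the Fell-bundle setting. Concretely, let $H=H_\sigma$ be the central extension of $G$ by $Z$ determined by $\sigma$, with section $c:G\to H$ as in \eqref{cvssigma}. Pulling $B$ back along the quotient map $q:H\to G$ gives a Fell bundle $q^*B$ over $H$ (with $(q^*B)_h=B_{q(h)}$), and $Z$ sits centrally inside it: each $z\in Z\subset Z(H)$ acts on $C^*(q^*B)$ via the canonical unitary $u_z$ coming from the one-dimensional fibre $B_e$ over $z\in H$. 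The idea is that $C^*(q^*B)$ decomposes over $\widehat Z$: for $\gamma\in\widehat Z$ the ideal $I_\gamma$ generated by $\{u_z-\gamma(z)1:z\in Z\}$ has quotient $C^*(q^*B)/I_\gamma\cong C^*(B(\gamma\circ\sigma))$. This last isomorphism is the concrete computation: a representation of $q^*B$ on which $u_z$ acts as $\gamma(z)$ is, after composing with the section $c$, exactly a representation of $B(\gamma\circ\sigma)$, because $c(g)c(h)=c(gh)$ in $H/Z=G$ up to the scalar $\sigma(g,h)$, which picks up the factor $\gamma(\sigma(g,h))=(\gamma\circ\sigma)(g,h)$ in the deformed multiplication.

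Next I would set up the $C^*$-bundle. The central copy of $Z$ gives a unital embedding $C^*(Z)=C(\widehat Z)\hookrightarrow ZM(C^*(q^*B))$, i.e. a continuous $C^*$-bundle structure on $C^*(q^*B)$ over $\widehat Z$ in the sense of a $C_0(\widehat Z)$-algebra. Its fibre at $\gamma$ is by definition $C^*(q^*B)/(C_0(\widehat Z\setminus\{\gamma\})\cdot C^*(q^*B))$, and one checks this is precisely the quotient $C^*(q^*B)/I_\gamma\cong C^*(B(\gamma\circ\sigma))$ identified above. So the bundle $E$ is the one underlying the $C(\widehat Z)$-algebra $C^*(q^*B)$; that it is a \emph{continuous} bundle (as opposed to merely upper semicontinuous) is where amenability enters. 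A $C_0(X)$-algebra always gives an upper-semicontinuous bundle; lower semicontinuity of $\gamma\mapsto\|a(\gamma)\|$ is the point that can fail, and the standard device is to pass to the reduced picture, where the norm is computed by the regular representation and is manifestly lower semicontinuous in the relevant parameter. Here $H$ is amenable (it is an extension of the amenable $G$ by the abelian, hence amenable, $Z$), so $q^*B$ is amenable by the hypothesis-compatible machinery (or directly: Corollary~\ref{amenability}-type reasoning, but in the general Fell-bundle case one invokes amenability of $H$), and $C^*(q^*B)=C^*_r(q^*B)$. Likewise each $B(\gamma\circ\sigma)$ is amenable over $G$. In the reduced picture one has a single Hilbert module $\ell^2(q^*B)$ that decomposes as a direct integral over $\widehat Z$ of the modules $\ell^2(B)$ with the deformed inner products, and $\gamma\mapsto\|\lambda_\gamma(a)\|$ is lower semicontinuous because it is a supremum of continuous functions $\gamma\mapsto\|\lambda_\gamma(a)\xi_\gamma\|$ over a suitable dense family of sections $\xi$; combined with upper semicontinuity this gives continuity of the norm function, hence a continuous bundle.

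I expect the main obstacle to be the lower-semicontinuity/continuity step, i.e. genuinely proving that the norm $\gamma\mapsto\|a(\gamma)\|$ is continuous rather than merely upper semicontinuous. This is exactly the point addressed by the analogous theorems for twisted crossed products cited in the paper (\cite{AP, Rie, W}), and the honest argument there runs through the reduced crossed product / regular representation; the Fell-bundle analogue should run parallel, using Exel's left regular representation of $q^*B$ and amenability of $H$ to identify full and reduced, but one has to be careful that the field of regular representations $\{\lambda_\gamma\}$ varies continuously enough to see lower semicontinuity. Everything else --- the construction of $q^*B$, the central embedding of $C(\widehat Z)$, and the fibrewise identification with $C^*(B(\gamma\circ\sigma))$ --- is routine once the section $c$ is fixed and the cocycle bookkeeping is done carefully; the identification of the fibre quotient is essentially the observation recorded around \eqref{cvssigma} together with the defining formulas for $\cdot_\sigma$ and ${}^{*_\sigma}$.
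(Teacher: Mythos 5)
Your proposal follows essentially the same route as the paper: pull $B$ back to a Fell bundle over the central extension $H_\sigma$, use the central unitaries implementing $Z$ to make $C^*(q^*B)$ a $C(\widehat Z)$-algebra, identify the fibre at $\gamma$ with $C^*(B(\gamma\circ\sigma))$ via the section $c$ (the injectivity of that identification being supplied by the Anderson--Paschke-style inverse built from the universal property), and obtain lower semicontinuity of $\gamma\mapsto\|a(\gamma)\|$ from the regular representations on the fixed module $\ell^2(B)$, which are faithful because amenability of $G$ makes each $B(\gamma\circ\sigma)$ amenable. The only deviations are cosmetic (the fibre of $q^*B$ over $z\in Z$ is a copy of $B_e$, not one-dimensional, so the central unitaries must be produced as multipliers; and amenability of $H$ itself is not needed), and neither affects the argument.
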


To prove such a theorem, one usually finds a candidate $A$ for the $C^*$-algebra $\Gamma(E)$ of continuous sections of $E$, proves that $A$ is a $C(\widehat Z)$-algebra, and then identifies the quotients $A(\gamma)$ with the desired fibres $C^*(B(\gamma\circ \sigma))$. Then a theorem of Fell says there is an upper-semicontinuous $C^*$-bundle $E$ as claimed (see \cite[Theorem~C.25]{tfb2}, for example). To prove the bundle is continuous requires further argument, and this is where the amenability comes in (as it did in \cite{Rie} and \cite{W}).

Our candidate $A$ will be the $C^*$-algebra of a Fell bundle over a central extension $H$ of $G$ defined by the cocycle $\sigma$.  So we suppose that $q:H\to G$ is a central extension of $G$ by $Z$, and that $c:G\to H$ is a section for $q$ satisfying $c(e_G)=e_H$ such that the cocycle $\sigma$ is given by \eqref{cvssigma}.

We now form the pullback bundle $q^*B=\{(h,b):b\in B_{q(h)}\}$. For each $h$, the map $(h,b)\mapsto b$ is a bijection of $(q^*B)_h$ onto $B_{q(h)}$, and we use it to pull back the Banach space structure from $B_{q(h)}$. Then with $q^*p:(h,b)\mapsto h$, $q^*p:q^*B\to H$ is a Banach bundle over $H$. We claim that $q^*p:q^*B\to H$ is a Fell bundle over $H$ with 
\[
(h,b)(k,c)=(hk, bc)\text{ and }(h,b)^*=(h^{-1},b^*).
\]
Indeed, the algebraic properties \eqref{FBa}, \eqref{FBb} and \eqref{FBc} all follow from the corresponding properties of $B$. Then $(e,b)\mapsto b$ is a $C^*$-algebra isomorphism of $(q^*B)_{e_H}$ onto $B_{e_G}$, and therefore preserves positivity. Thus
\[
(h,b)^*(h,b)=(h^{-1},b^*)(h,b)=(e,b^*b)
\]
is positive in $(q^*B)_{e}$, and $q^*B$ also satisfies \eqref{FBd}.

Our candidate for the section algebra of the bundle $E$ in Theorem~\ref{existbdle1} is $C^*(q^*B)$. So next we have to show that $C^*(q^*B)$ is a $C(\widehat Z)$-algebra.

\begin{lem}\label{intocentre}
There is a unitary representation $v$ of $Z$ in the centre of $M(C^*(q^*B))$ such that
\begin{equation}\label{defv}
v_z\pi_h(h,b)=\pi_{zh}(zh,b)\text{ for all $(h,b)\in q^*B$.}
\end{equation}
\end{lem}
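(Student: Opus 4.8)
The plan is to construct each $v_z$, for $z$ in the central subgroup $Z\leq H$, as the multiplier of $C^*(q^*B)$ that implements translation by $z$, realised concretely as a strict limit. Since $q(z)=e_G$, the fibre $(q^*B)_z$ consists of pairs $(z,b)$ with $b\in B_{e_G}$, and $(z,b)\mapsto b$ is an isometric linear bijection of $(q^*B)_z$ onto the $C^*$-algebra $B_{e_G}=(q^*B)_{e_H}$, since the fibre norms of $q^*B$ are pulled back from $B$. Fix a self-adjoint approximate unit $(u_i)$ for $B_{e_G}$ and set $w_i:=\pi_z(z,u_i)\in C^*(q^*B)$; then $\|w_i\|\leq\|u_i\|\leq1$.

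First I would compute, for any $(h,b)\in q^*B$, using the defining operations on $q^*B$ and the representation property of $\pi$,
\[
w_i\,\pi_h(h,b)=\pi_{zh}\big((z,u_i)(h,b)\big)=\pi_{zh}(zh,u_ib),\qquad
\pi_h(h,b)\,w_i=\pi_{hz}(hz,bu_i).
\]
Because $b\in B_{q(h)}$ and an approximate unit of the core acts as a two-sided approximate unit on each fibre (since $\overline{B_{e_G}B_{q(h)}}=B_{q(h)}=\overline{B_{q(h)}B_{e_G}}$ in any Fell bundle), we have $u_ib\to b$ and $bu_i\to b$ in $B_{q(h)}$, so $w_i\,\pi_h(h,b)\to\pi_{zh}(zh,b)$ and $\pi_h(h,b)\,w_i\to\pi_{hz}(hz,b)$. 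Since the elements $\pi_h(h,b)$ span a dense subalgebra of $C^*(q^*B)$ and $(w_i)$ is uniformly bounded, the usual $\varepsilon/3$ argument shows $(w_i)$ is strictly Cauchy, hence converges strictly to a multiplier $v_z\in M(C^*(q^*B))$ with
\[
v_z\,\pi_h(h,b)=\pi_{zh}(zh,b)\qquad\text{and}\qquad\pi_h(h,b)\,v_z=\pi_{hz}(hz,b);
\]
taking $h=e_H$ gives \eqref{defv}. As $z$ is central in $H$, $hz=zh$, so the two formulas agree and $v_z$ commutes with every $\pi_h\big((q^*B)_h\big)$; since these generate $C^*(q^*B)$, $v_z$ lies in the centre of $M(C^*(q^*B))$.

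It then remains to see that $z\mapsto v_z$ is a unitary representation. From the formula above, $v_zv_{z'}\pi_h(h,b)=v_z\pi_{z'h}(z'h,b)=\pi_{zz'h}(zz'h,b)=v_{zz'}\pi_h(h,b)$, and since a multiplier is determined by its action on a dense subalgebra, $v_zv_{z'}=v_{zz'}$; likewise $v_{e_Z}=1$ because $\big(\pi_{e_H}(e_H,u_i)\big)$ converges strictly to $1$, the representation $\pi_{e_H}$ of $B_{e_G}$ being nondegenerate. Taking adjoints of the strict limit and using $u_i^*=u_i$ gives $v_z^*=\lim_i\pi_{z^{-1}}(z^{-1},u_i)=v_{z^{-1}}$, so $v_zv_z^*=v_zv_{z^{-1}}=v_{e_Z}=1=v_z^*v_z$ and each $v_z$ is unitary. (This is exactly what is needed to exhibit $C^*(q^*B)$ as a $C(\widehat Z)$-algebra in the next step.)

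The only point that needs genuine care is the strict convergence of $\big(\pi_z(z,u_i)\big)$: one must use that the fibre norms of $q^*B$ come from $B$ (so the net is uniformly bounded) and that an approximate unit of $B_{e_G}$ is a two-sided approximate unit on each $B_{q(h)}$; granted these, the rest is the standard fact that a uniformly bounded net converging strictly on a dense subalgebra defines a multiplier, plus the routine bookkeeping identifying $v_zv_{z'}$ and $v_z^*$.
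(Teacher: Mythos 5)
Your proof is correct, but it takes a genuinely different route from the paper's. The paper fixes a faithful nondegenerate representation $\rho$ of $C^*(q^*B)$ on a Hilbert space $\HH$ and defines $V_z$ directly on the dense subspace $\rho_e((q^*B)_e)\HH$ by $V_z(\rho_e(e,b)\xi)=\rho_z(z,b)\xi$, checking via the inner-product computation $\big(\rho_z(z,b)\xi\,|\,\rho_z(z,b)\xi\big)=\big(\rho_e(e,b^*b)\xi\,|\,\xi\big)$ that this is a well-defined isometry; it then observes that $V_z$ multiplies the image of $C^*(q^*B)$ and pulls it back to a multiplier, with centrality established by a separate adjoint calculation. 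You instead build $v_z$ intrinsically as the strict limit of $\pi_z(z,u_i)$ over a self-adjoint approximate unit $(u_i)$ of $B_{e_G}$, using the (correct, standard) fact that $\overline{B_{e_G}B_g}=B_g=\overline{B_gB_{e_G}}$ in any Fell bundle. Your approach avoids choosing a spatial representation and has the pleasant feature that it yields the left and right multiplication formulas $v_z\pi_h(h,b)=\pi_{zh}(zh,b)$ and $\pi_h(h,b)v_z=\pi_{hz}(hz,b)$ simultaneously, so that centrality is immediate from the centrality of $Z$ in $H$, and unitarity drops out of the group law $v_zv_{z^{-1}}=v_{e_Z}=1$ rather than from an isometry argument. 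The price is that you must invoke the standard machinery of strict completeness of $M(A)$ for uniformly bounded nets and the nondegeneracy of the $B_{e_G}$-action on each fibre; you flag both points and they are sound. Either argument is a complete proof of the lemma.
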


\begin{proof}
We choose a faithful nondegenerate representation $\rho$ of $C^*(q^*B)$ on a Hilbert space $\HH$. Then $\rho_e$ is a nondegenerate representation of $(q^*B)_e=\{e\}\times B_e$ on $\HH$. For $z\in Z$, $b\in B_e$ and $\xi\in \HH$ we have
\begin{align*}
\big(\rho_z(z,b)\xi\,|\,\rho_z(z,b)\xi\big)&=\big(\rho_{z^{-1}}(z^{-1},b)^*\rho_z(z,b)\xi\,|\,\xi\big)\\
&=\big(\rho_e(e,b^*b)\xi\,|\,\xi\big)\\
&=\big(\rho_e(e,b)\xi\,|\,\rho_e(e,b)\xi\big),
\end{align*}
and hence there is an isometry $V_z$ on $\HH$ such that $V_z(\rho_e(e,b)\xi)=\rho_z(z,b)\xi$.  This formula shows that $V_z$ multiplies $\rho(C^*(q^*B))$, and hence there is a unique unitary multiplier $v_z\in M(C^*(q^*B))$ such that $\bar\rho(v_z)=V_z$; a similar calculation shows that $v_z$ satisfies \eqref{defv}. It is invertible because $v_{z^{-1}}$ is an inverse. Finally, to see that $v_z$ is in the centre of $M(C^*(q^*B))$, we take $(h,b)\in q^*B$ and compute
\begin{align*}
(\pi_h(h,b)v_z)^*&=v_z^*\pi_h(h,b)^*=v_{z^{-1}}\pi_{h^{-1}}(h^{-1},b^*)=\pi_{z^{-1}h^{-1}}(z^{-1}h^{-1},b^*)\\
&=\pi_{hz}(hz,b)^*=(v_z\pi_h(h,b))^*,
\end{align*}
which implies that $\pi_h(h,b)v_z=v_z\pi_h(h,b)$.
\end{proof}

The integrated form $\pi_v$ of the representation $v$ of Lemma~\ref{intocentre} gives a unital homomorphism of $C^*(Z)$ into the centre of $M(C^*(q^*B))$. Since the Fourier transform is an isomorphism of $C^*(Z)$ onto $C(\widehat Z)$, this gives $C^*(q^*B)$ the structure of a $C(\widehat Z)$-algebra. Following the proof of \cite[Theorem~C.26]{tfb2} shows that there is an upper-semicontinuous bundle $E$ over $\widehat Z$ and a canonical isomorphism of $C^*(q^*B)$ onto $\Gamma(E)$. The proof shows that the fibre $E_\gamma$ is the quotient $C^*(q^*B)(\gamma)$ of $C^*(q^*B)$ by the ideal $I_\gamma:=\overline{(\ker\gamma)\cdot C^*(q^*B)}$, where $\ker\gamma$ is the kernel in $C^*(Z)$ of the integrated form of the representation $\gamma:Z\to \T=U(\C)$, and that the isomorphism takes $a\in C^*(q^*B)$ to the section $\gamma\mapsto a(\gamma)$.

In the next lemma we identify the fibre $E_\gamma$ with $C^*(B(\gamma\circ\sigma))$.

\begin{lem}\label{idfibres}
For $\gamma\in \widehat Z$, we denote by $\pi^\gamma$ the canonical representation of $B(\gamma\circ\sigma)$ in $C^*(B(\gamma\circ\sigma))$, and define $\rho^\gamma:q^*B\to C^*(B(\gamma\circ\sigma))$ by
\[
\rho^\gamma(h,b)=\gamma(hc(q(h))^{-1})\pi^\gamma_{q(h)}(b).
\]
Then $\rho^\gamma$ is a representation of $q^*B$. The (extension to the multiplier algebra of) the integrated form $\rho^\gamma:C^*(q^*B)\to C^*(B(\gamma\circ\sigma))$ then satisfies $\rho^\gamma(\pi_v(x))=\gamma(x)$ for $x\in C^*(Z)$, and induces an isomorphism of $C^*(q^*B)(\gamma):=C^*(q^*B)/I_\gamma$ onto $C^*(B(\gamma\circ\sigma))$.
\end{lem}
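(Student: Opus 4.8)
The plan is to prove Lemma~\ref{idfibres} in three stages: first verify that $\rho^\gamma$ is a representation of the Fell bundle $q^*B$; then check the multiplier-algebra identity $\rho^\gamma(\pi_v(x))=\gamma(x)$; and finally deduce that the integrated form $\rho^\gamma$ descends to an isomorphism of $C^*(q^*B)(\gamma)$ onto $C^*(B(\gamma\circ\sigma))$.

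\smallskip
\emph{Stage one.} I would write $\chi(h):=\gamma(hc(q(h))^{-1})\in\T$, so that $\rho^\gamma(h,b)=\chi(h)\pi^\gamma_{q(h)}(b)$, and check the three axioms for a representation of $q^*B$. Linearity in $b$ for fixed $h$ is immediate since $\pi^\gamma_{q(h)}$ is linear and $\chi(h)$ is a scalar. For multiplicativity, given $(h,b)\in (q^*B)_h$ and $(k,c)\in(q^*B)_k$, I compute
\[
\rho^\gamma(h,b)\rho^\gamma(k,c)=\chi(h)\chi(k)\pi^\gamma_{q(h)}(b)\pi^\gamma_{q(k)}(c)=\chi(h)\chi(k)\pi^\gamma_{q(h)q(k)}(bc),
\]
and I must match this against $\rho^\gamma((h,b)(k,c))=\rho^\gamma(hk,bc)=\chi(hk)\pi^\gamma_{q(hk)}\!\big((\gamma\circ\sigma)(q(h),q(k))^{-1}\cdot\text{(something)}\big)$; here the point is that the deformed multiplication in $B(\gamma\circ\sigma)$ carries a factor $(\gamma\circ\sigma)(q(h),q(k))$, so $\pi^\gamma_{q(h)q(k)}(bc)=\overline{(\gamma\circ\sigma)(q(h),q(k))}\,\pi^\gamma_{q(h)}(b)\cdot_{\gamma\circ\sigma}\pi^\gamma_{q(k)}(c)$ as elements of $C^*(B(\gamma\circ\sigma))$, matching the products of the $\pi^\gamma$'s inside that algebra. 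The identity I actually need is
\[
\chi(h)\chi(k)=\chi(hk)\,\overline{(\gamma\circ\sigma)(q(h),q(k))},
\]
which unwinds, using $\chi(h)=\gamma(h)\overline{\gamma(c(q(h)))}$ (valid because $hc(q(h))^{-1}\in Z$ and $\gamma$ is a character of the abelian group $Z$, and $Z$ is central so the ordering is harmless) and $\sigma(g_1,g_2)=c(g_1)c(g_2)c(g_1g_2)^{-1}$ from \eqref{cvssigma}, to the tautology $\gamma(c(q(h)))\gamma(c(q(k)))\overline{\gamma(c(q(h))c(q(k))c(q(hk))^{-1})}=\gamma(c(q(hk)))$. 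The $*$-axiom is a similar bookkeeping check using $\chi(h^{-1})=\overline{\chi(h)}\cdot\overline{\gamma(\sigma(q(h),q(h)^{-1}))}$ and the definition of $b^{*_{\gamma\circ\sigma}}$, and nondegeneracy of $\rho^\gamma_{e_H}$ follows since on $(q^*B)_{e_H}$ we have $\chi\equiv1$, $q(e_H)=e_G$, $c(e_G)=e_H$, so $\rho^\gamma_{e_H}(e_H,b)=\pi^\gamma_{e_G}(b)$, which is nondegenerate.

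\smallskip
\emph{Stage two.} It suffices to evaluate $\rho^\gamma$ on the generators $v_z$, $z\in Z$, of the range of $\pi_v$. Since $v_z$ is the multiplier with $\rho^\gamma(v_z\pi_h(h,b))=\rho^\gamma(\pi_{zh}(zh,b))$, applying the integrated form gives $\bar\rho^\gamma(v_z)\rho^\gamma(h,b)=\rho^\gamma(zh,b)=\chi(zh)\pi^\gamma_{q(zh)}(b)=\chi(zh)\pi^\gamma_{q(h)}(b)$, because $q(zh)=q(h)$. But $\chi(zh)=\gamma(zh\,c(q(h))^{-1})=\gamma(z)\gamma(hc(q(h))^{-1})=\gamma(z)\chi(h)$, so $\bar\rho^\gamma(v_z)\rho^\gamma(h,b)=\gamma(z)\rho^\gamma(h,b)$; since the $\rho^\gamma(h,b)$ have dense span in $C^*(B(\gamma\circ\sigma))$ and $\bar\rho^\gamma(v_z)$ is central, $\bar\rho^\gamma(v_z)=\gamma(z)1$. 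Integrating over $Z$ gives $\bar\rho^\gamma(\pi_v(x))=\widehat{x}(\gamma)=\gamma(x)$ for $x\in C^*(Z)$ in the sense intended.

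\smallskip
\emph{Stage three.} From $\bar\rho^\gamma(\pi_v(x))=\gamma(x)$ it follows that $\ker\gamma\subset\ker\rho^\gamma$ (as ideals of $C^*(Z)$ acting through $\pi_v$), hence $I_\gamma=\overline{(\ker\gamma)\cdot C^*(q^*B)}\subset\ker\rho^\gamma$, so $\rho^\gamma$ factors through a homomorphism $\overline{\rho^\gamma}:C^*(q^*B)(\gamma)\to C^*(B(\gamma\circ\sigma))$. Surjectivity is clear since the $\rho^\gamma(h,b)$ generate. For injectivity I would construct the inverse directly: define $\psi^\gamma:B(\gamma\circ\sigma)\to C^*(q^*B)(\gamma)$ on the fibre over $g\in G$ by $\psi^\gamma_g(b)=\overline{\gamma(c(g))}\,\big[\pi_{c(g)}(c(g),b)\big]_\gamma$, where $[\,\cdot\,]_\gamma$ denotes the image in the quotient. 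One checks, using \eqref{cvssigma} and $v_z\equiv\gamma(z)$ modulo $I_\gamma$ (which encodes exactly that $\pi_{zh}(zh,b)\equiv\gamma(z)\pi_h(h,b)$ in the quotient), that $\psi^\gamma$ is a representation of $B(\gamma\circ\sigma)$, and that its integrated form inverts $\overline{\rho^\gamma}$ on generators. The main obstacle is the cocycle bookkeeping in Stage one and in verifying $\psi^\gamma$ is a representation: one must be careful that the twist $\gamma\circ\sigma$ on the target, the section-dependence of $\chi$, and the central action of $Z$ in $q^*B$ all cancel correctly, and in particular that $\psi^\gamma$ is well defined independently of the representative $zh$ of a coset, which is precisely guaranteed by the definition of $I_\gamma$. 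Everything else is formal once these identities are in hand, and amenability of $G$ plays no role in this lemma — it enters only afterwards, to upgrade upper-semicontinuity of $E$ to continuity.
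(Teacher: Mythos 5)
Your route is the same as the paper's: verify the representation axioms for $\rho^\gamma$ directly from \eqref{cvssigma} and the centrality of $Z$, evaluate $\overline{\rho^\gamma}(v_z)$ on the spanning elements $\pi_h(h,b)$ to get $\gamma(z)1$, factor through $I_\gamma$, and prove injectivity by building a left inverse out of the section $c$ and the universal property of $C^*(B(\gamma\circ\sigma))$. Stages one and two are essentially correct: the identity $\chi(h)\chi(k)=\chi(hk)\,\overline{(\gamma\circ\sigma)(q(h),q(k))}$ is exactly what is needed and does reduce to \eqref{cvssigma} via centrality. Two small cautions: you should not write $\chi(h)=\gamma(h)\overline{\gamma(c(q(h)))}$, since $\gamma$ is a character of $Z$ only and neither $h$ nor $c(q(h))$ lies in $Z$ --- the computation must be carried out with the single $Z$-elements $hc(q(h))^{-1}$, $kc(q(k))^{-1}$ and centrality, as you in effect do; and your first display in Stage one, $\pi^\gamma_{q(h)}(b)\pi^\gamma_{q(k)}(c)=\pi^\gamma_{q(h)q(k)}(bc)$, is false as written (the product is $\pi^\gamma_{q(hk)}(b\cdot_{\gamma\circ\sigma}c)=(\gamma\circ\sigma)(q(h),q(k))\,\pi^\gamma_{q(hk)}(bc)$), though you correct this in the following sentence.

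The genuine problem is the formula for the inverse in Stage three. You set $\psi^\gamma_g(b)=\overline{\gamma(c(g))}\,\big[\pi_{c(g)}(c(g),b)\big]_\gamma$, but $\gamma(c(g))$ is meaningless: $c(g)$ lies in $H$ and, for $g\neq e$, not in $Z$, while $\gamma$ is defined only on $Z$. Moreover, no prefactor of this kind is wanted. If one writes $\psi^\gamma_g(b)=\omega(g)\,Q(\pi_{c(g)}(c(g),b))$ for a scalar function $\omega$ on $G$ (with $Q$ the quotient map onto $C^*(q^*B)(\gamma)$), then the cocycle $\gamma(\sigma(g,g'))$ produced by $c(g)c(g')=\sigma(g,g')c(gg')$ is already absorbed exactly by the relation $Q(\pi_{zh}(zh,b))=\gamma(z)Q(\pi_h(h,b))$, so multiplicativity of $\psi^\gamma$ forces $\omega(g)\omega(g')=\omega(gg')$, and the choice that makes the composite with $\rho^\gamma$ equal to $Q$ --- whence $\ker\rho^\gamma\subset I_\gamma$ and injectivity of the induced map --- is $\omega\equiv 1$, i.e.\ $\tau_g(b)=Q(\pi_{c(g)}(c(g),b))$, which is what the paper uses. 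With your prefactor the deferred verification that the integrated form inverts $\overline{\rho^\gamma}$ on generators cannot be carried out. Your closing observation that amenability of $G$ plays no role in this lemma, entering only in the lower-semicontinuity argument, is correct.
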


\begin{proof}
To see that $\rho^\gamma$ is a representation, we need to do some calculations. Because they involve many terms of the form $c(q(h))$, and no ambiguity seems likely, we write $c(h):=c(q(h))$.  We have
\begin{align*}
\rho^\gamma(h,b)\rho^\gamma(k,c)&=\gamma(hc(h)^{-1})\pi_{q(h)}^\gamma(b)\gamma(kc(k)^{-1})\pi_{q(k)}^\gamma(c)\\
&=\gamma(hc(h)^{-1}kc(k)^{-1})\pi_{q(hk)}^\gamma(b\cdot_{\gamma\circ\sigma}c)\\
&=\gamma(hkc(k)^{-1}c(h)^{-1})\pi_{q(hk)}^\gamma(\gamma\circ\sigma(q(h),q(k))bc)\quad\text{(since $kc(k)^{-1}\in Z$)}\\
&=\gamma(hkc(k)^{-1}c(h)^{-1}\sigma(q(h),q(k)))\pi_{q(hk)}^\gamma(bc)\\
&=\gamma(hkc(k)^{-1}c(h)^{-1}c(h)c(k)c(hk)^{-1})\pi_{q(hk)}^\gamma(bc)\\
&=\gamma(hkc(hk)^{-1})\pi_{q(hk)}^\gamma(bc)\\
&=\rho^\gamma(hk,bc)=\rho^\gamma((h,b)(k,c)).
\end{align*}
Similarly,
\begin{align*}
\rho^\gamma(h,b)^*
&=\big(\gamma(hc(h)^{-1})\pi_{q(h)}^\gamma(b)\big)^*=\overline{\gamma(hc(h)^{-1})}\pi_{q(h)}^\gamma(b)^*\\
&=\overline{\gamma(hc(h)^{-1})}\pi_{q(h)}^\gamma(b^{*_{\gamma\circ\sigma}})
=\overline{\gamma(hc(h)^{-1})}\pi_{q(h^{-1})}^\gamma\big(\overline{\gamma\circ\sigma(q(h),q(h^{-1}))}b^*\big)\\
&=\overline{\gamma(hc(h)^{-1}c(h)c(h^{-1})c(e)^{-1})}\pi_{q(h^{-1})}^\gamma(b^*)
=\overline{\gamma(c(h)c(h^{-1})hc(h)^{-1})}\pi_{q(h^{-1})}^\gamma(b^*)\\
&=\overline{\gamma(c(h^{-1})h)}\pi_{q(h^{-1})}^\gamma(b^*)\quad\text{(because $Z$ is central in $H$)}\\
&=\gamma(h^{-1}c(h^{-1})^{-1})\pi_{q(h^{-1})}^\gamma(b^*)=\rho^\gamma(h^{-1},b^*)=\rho((h,b)^*).
\end{align*}
For $(e,b)\in (q^*B)_e=B_e$, we have
\[
\rho^\gamma(e,b)=\gamma(ec(e)^{-1})\pi_{q(e)}^\gamma(b)=\pi_e^\gamma(b),
\]
and since $\pi_e$ is nondegenerate, so is $\rho^\gamma_e$. Thus $\rho^\gamma$ is a representation of $q^*B$.

For the next part, we write $\overline{\rho^\gamma}$ for the extension to $M(C^*(q^*B))$, which is characterised by  $\overline{\rho^\gamma}(m)\rho^\gamma(a)=\rho^\gamma(ma)$ for $m\in M(C^*(q^*B))$ and $a\in C^*(q^*B)$. It suffices to show that the two multipliers $\overline{\rho^\gamma}(v_z)$ and $\gamma(z)1_{M(C^*(q^*B))}$ agree on spanning elements $\pi_h(h,b)$. We have
\begin{align*}
\overline{\rho^\gamma}(v_z)\rho^\gamma(\pi_h(h,b))&=\rho^\gamma(v_z\pi_h(h,b))=\rho^\gamma(\pi_h(zh,b))\\
&=\gamma(zhc(zh)^{-1})\pi^\gamma_{q(zh)}(b)=\gamma(z)\gamma(hc(h)^{-1})\pi^\gamma_{q(h)}(b)\\
&=\gamma(z)\rho^\gamma(\pi_h(h,b)).
\end{align*}
Thus $\overline{\rho^\gamma}(v_z)=\gamma(z)1$ for all $z\in Z$, and it follows that $\overline{\rho^\gamma}(x)$ is the integrated form $\gamma(x)$ for all $x\in C^*(Z)$.

Since $\overline{\rho^\gamma}(x)=\gamma(x)$, $\rho^\gamma$ vanishes on the ideal $I_\gamma:=\overline{(\ker \gamma)\cdot C^*(q^*B)}$, and hence factors through a homomorphism $\phi_\gamma:C^*(q^*B)(\gamma)\to C^*(B(\gamma\circ \sigma))$. Since the range of $\rho^\gamma$ contains every $b$ in every $B_g=B(\gamma\circ\sigma)_g$, $\rho^\gamma$ and $\phi_\gamma$ are surjective. 

We prove that $\phi_\gamma$ is injective by constructing a left inverse for $\phi_\gamma$, using the universal property of $C^*(B(\gamma\circ\sigma))$ (and borrowing an idea from Anderson and Paschke \cite[Theorem~1.1]{AP}). We write $Q$ for the quotient map of $C^*(q^*B)$ onto $C^*(q^*B)(\gamma)$. Since $v_z-\gamma(z)1$ belongs to the kernel of $\gamma:C^*(Z)\to \C$, we have
\[
\pi_{zh}(zh,b)-\gamma(z)\pi_h(h,b)=((v_z-\gamma(z)1)\pi_h(h,b)\in I_\gamma=\overline{(\ker \gamma)\cdot C^*(q^*B)}.
\]
Thus we have
\begin{equation}\label{propQ}
Q(\pi_{zh}(zh,b))=\gamma(z)Q(\pi_h(h,b))=Q(\pi_h(h,\gamma(z)b)) \text{ for all $(h,b)\in q^*B$.}
\end{equation}
We now define $\tau_g:B_g\to C^*(q^*B)(\gamma)$ by $\tau_g(b)=Q\circ\pi(c(g),b):=Q(\pi_{c(g)}(c(g),b))$, and aim to prove that $\tau=\{\tau_g:g\in G\}$ is a representation of $B(\gamma\circ\sigma)$.

For $b\in B_{g_1}$ and $c\in B_{g_2}$, we calculate using \eqref{propQ}:
\begin{align*}
\tau_{g_1}(b)\tau_{g_2}(c)
&=Q\circ\pi(c(g_1)c(g_2),bc)=Q\circ\pi(\sigma(g_1,g_2)c(g_1g_2),bc)\\
&=Q\circ\pi(c(g_1g_2),\gamma(\sigma(g_1,g_2))bc)=\tau_{g_1g_2}(b\cdot_{\gamma\circ\sigma}c).
\end{align*}
Next, we let $b\in B_g$, and compute the adjoint. Here we observe that, since $Z$ is central, $c(g^{-1})c(g)$ belongs to $Z$, and hence we can  use \eqref{propQ} again:
\begin{align*}
\tau_g(b)^*&=Q\circ \pi((c(g),b)^*)=Q\circ \pi(c(g)^{-1},b^*)\\
&=Q\circ \pi(c(g)^{-1}c(g^{-1})^{-1}c(g^{-1}),b^*)
=Q\circ \pi((c(g^{-1})c(g))^{-1}c(g^{-1}),b^*)\\
&=Q\circ \pi(c(g^{-1}),\overline{\gamma(c(g^{-1})c(g))}b^*)=Q\circ \pi(c(g^{-1}),\overline{\gamma(\sigma(g^{-1},g))}b^*),
\end{align*}
which is $\tau_{g^{-1}}(b^{*_{\gamma\circ\sigma}})$ because $\sigma(g^{-1},g)=\sigma(g,g^{-1})$. The representation $\tau_e$ of $B(\gamma\circ\sigma)_e=B_e$ is nondegenerate because all the maps in the composition
\[
B_e\overset{=}{\longrightarrow}(q^*B)_e\overset{\pi_e}{\longrightarrow} C^*(q^*B)\overset{Q}{\longrightarrow}C^*(q^*B)(\gamma) 
\] 
are. Thus $\tau$ is a representation of the Fell bundle $B(\gamma\circ\sigma)$, and hence induces a nondegenerate homomorphism $\tau:C^*(B(\gamma\circ\sigma))\to C^*(q^*B)(\gamma)$.

We claim that $\tau\circ\rho^\gamma=Q$. For $(h,b)\in q^*B$, we compute
\begin{align*}
\tau\circ\rho^\gamma(\pi_h(h,b))
&=\tau(\gamma(hc(h)^{-1})\pi_{q(h)}^\gamma(b))
=\gamma(hc(h)^{-1})\tau_{q(h)}(b)\\
&=\gamma(hc(h)^{-1})Q(\pi_{c(h)}(c(h),b))\\
&=Q(\pi_{hc(h)^{-1}c(h)}(hc(h)^{-1}c(h),b)) \text{ (by \eqref{propQ})}\\
&=Q(\pi_h(h,b)).
\end{align*}
Thus $\tau\circ\rho^\gamma=Q$, as claimed. This implies that $\ker\rho^\gamma\subset \ker Q=I_\gamma$, and hence that the induced homomorphism $\phi_\gamma$ of $C^*(q^*B)(\gamma)$ onto $C^*(B(\gamma\circ\sigma))$ is injective.
\end{proof}

As in the discussion preceding Lemma~\ref{idfibres}, we now have an upper-semicontinuous bundle over $\widehat Z$ with fibres $C^*(B(\gamma\circ\sigma))$. The next lemma will help us prove that the bundle is continuous. Again the idea of the proof goes back at least as far as Anderson and Paschke \cite{AP}, and a similar argument was used by Rieffel in \cite[Theorem~2.5]{Rie}.

\begin{lem}\label{lsc}
Suppose that $G$ is amenable, and consider the homomorphisms $\rho^\gamma$ constructed in Lemma~\ref{idfibres}. Then for each $a\in C^*(q^*B)$, the function $\gamma\mapsto \|\rho^\gamma(a)\|$ is lower semicontinuous from $\widehat Z$ to $[0,\infty)$.
\end{lem}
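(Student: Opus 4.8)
The plan is to exhibit the fibre norm $\|\rho^\gamma(a)\|$ as a supremum of continuous functions of $\gamma$. The key observation is that each $\rho^\gamma$ is built from the \emph{same} underlying data: the Fell bundle $q^*B$, the canonical representation $\pi$ of $C^*(q^*B)$, and the characters $\gamma\in\widehat Z$ that act as scalars on the fibres. The first step is to reduce to the reduced norm. Since $G$ is amenable and $Z$ is abelian (hence $H$ is amenable as a central extension of an amenable group by an amenable group), the Fell bundle $B(\gamma\circ\sigma)$ over $G$ is amenable by Corollary~\ref{amenability} --- applied with the trivial functor $G\to G$, or alternatively one notes that amenability of $G$ already forces $C^*(B(\gamma\circ\sigma))=C^*_r(B(\gamma\circ\sigma))$ by Exel's results \cite{E}. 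Therefore $\|\rho^\gamma(a)\|=\|\lambda^\gamma(\rho^\gamma(a))\|$, where $\lambda^\gamma$ is the left regular representation of $B(\gamma\circ\sigma)$ on $\ell^2(B(\gamma\circ\sigma))=\bigoplus_{g\in G}B_g$.

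The second step is to write this reduced norm concretely. For a finite sum $a=\sum_{h\in F}\pi_h(h,b_h)$ in the dense $*$-subalgebra $\Gamma_c(q^*B)$, the operator $\lambda^\gamma(\rho^\gamma(a))$ acts on the (fixed, $\gamma$-independent) Hilbert module $\ell^2(B)$ by a formula whose matrix entries, with respect to the decomposition $\bigoplus_g B_g$, are of the form $\gamma(h\,c(q(h))^{-1})\,\gamma(\sigma(\cdot,\cdot))\,(\text{fixed element of }B_e)$. Thus for each fixed $\xi,\eta$ in the algebraic direct sum $\bigoplus_g B_g$ and each fixed pair of summands, the map $\gamma\mapsto \langle \lambda^\gamma(\rho^\gamma(a))\xi,\eta\rangle$ is a finite $Z$-linear combination of characters evaluated at fixed group elements, hence \emph{continuous} on $\widehat Z$ (which carries the topology of pointwise convergence). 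Consequently $\gamma\mapsto \|\lambda^\gamma(\rho^\gamma(a))\xi\|$ is continuous, and
\[
\gamma\longmapsto \|\rho^\gamma(a)\|=\sup\big\{\|\lambda^\gamma(\rho^\gamma(a))\xi\|:\xi\in\textstyle\bigoplus_g B_g,\ \|\xi\|\le 1\big\}
\]
is a supremum of continuous functions, hence lower semicontinuous --- for $a$ in the dense $*$-subalgebra $\Gamma_c(q^*B)$.

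The third step is to pass from $\Gamma_c(q^*B)$ to all of $C^*(q^*B)$. Here one uses that each $\rho^\gamma:C^*(q^*B)\to C^*(B(\gamma\circ\sigma))$ is norm-decreasing (it is a $*$-homomorphism), so $\big|\,\|\rho^\gamma(a)\|-\|\rho^\gamma(a')\|\,\big|\le \|a-a'\|$ uniformly in $\gamma$; thus the function $\gamma\mapsto\|\rho^\gamma(a)\|$ is a uniform limit of the lower semicontinuous functions $\gamma\mapsto\|\rho^\gamma(a')\|$ with $a'\in\Gamma_c(q^*B)$, and a uniform limit of lower semicontinuous functions is lower semicontinuous. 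This completes the proof.

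The main obstacle I expect is \emph{verifying the amenability reduction cleanly}: one must confirm that $C^*(B(\gamma\circ\sigma))=C^*_r(B(\gamma\circ\sigma))$ holds under the stated hypothesis that $G$ alone is amenable (the graph-algebra machinery of Corollary~\ref{amenability} is not directly available for an abstract Fell bundle over $G$). The honest route is to invoke Exel's theorem \cite{E} that Fell bundles over amenable discrete groups are amenable. Once that is in hand, the rest is the bookkeeping of step two --- checking that the left regular representation of the deformed bundle, transported through $\rho^\gamma$, depends on $\gamma$ only through characters evaluated at finitely many fixed group elements --- which is routine but must be done carefully to see the pointwise-in-$\gamma$ continuity of matrix coefficients.
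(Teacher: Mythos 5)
Your proposal is correct and follows essentially the same route as the paper: both arguments first use Exel's theorem that Fell bundles over amenable groups are amenable to replace $\|\rho^\gamma(a)\|$ by the norm of $\lambda^\gamma\circ\rho^\gamma(a)$ acting on the fixed, $\gamma$-independent Hilbert module $\ell^2(B)=\bigoplus_{g\in G}B_g$, and both then exploit the fact that for finitely supported data the operator's action depends on $\gamma$ only through finitely many character values, hence continuously. The only difference is packaging: the paper extends this to general $a$ by an $\epsilon/3$ argument giving strong-operator convergence and then uses that norm balls are strongly closed, whereas you express the norm as a supremum of continuous functions on the dense subalgebra and pass to general $a$ by uniform approximation --- the two endgames are interchangeable (and your parenthetical aside about invoking Corollary~\ref{amenability} is rightly discarded in favour of Exel's theorem, which is exactly what the paper cites).
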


\begin{proof}
For each $g\in G$, $B_g$ is a right Hilbert module over the $C^*$-algebra $B_e$, with inner product $\langle b,c\rangle:=b^*c$. Hence so is the Hilbert $B_e$-module direct sum $\ell^2(B):=\bigoplus_{g\in G}B_g$. For each $\gamma\in \widehat Z$, $C^*(B(\gamma\circ\sigma))$ has a regular representation $\lambda^\gamma$ in the $C^*$-algebra $\LL(\ell^2(B))$ of adjointable operators, which is given in terms of the original Fell bundle structure on $B$ by
\[
(\lambda^\gamma(b)\xi)(g_2)=\gamma\circ\sigma(g_1,g_1^{-1}g_2)b\xi(g_1^{-1}g_2)\text{ for $b\in B(\gamma\circ\sigma)_{g_1}=B_{g_1}$}
\]
(see \cite[\S2]{E}). Since $G$ is amenable, it follows from \cite[Theorem~4.6]{E} that the Fell bundle $B(\gamma\circ\sigma)$ is amenable, and hence $\lambda^\gamma$ is faithful on $C^*(B(\gamma\circ\sigma))$. Thus it suffices for us to prove that $\gamma\mapsto \|\lambda^\gamma\circ\rho^\gamma(a)\|$ is lower semi-continuous.

For $\xi\in \Gamma_c(B)\subset \ell^2(B)$ and $(h,b)\in q^*B$, we compute
\[
\lambda^\gamma\circ\rho^\gamma(h,b)(\xi)(g)=\gamma\circ\sigma(q(h),q(h)^{-1}g)\gamma(hc(h)^{-1})b\xi(q(h)^{-1}g),
\]
where again the product of $b\in (q^*B)_h=B_{q(h)}$ and $\xi(q(h)^{-1}g)$ is taken in the original bundle $B$. We suppose that $\gamma_i\to \gamma$ in $\widehat Z$, which means that $\gamma_i\to\gamma$ pointwise on $Z$. Then for every $x\in \Gamma_c(q^*B)$ and $\xi\in \Gamma_c(B)$, we have $\lambda^{\gamma_i}\circ\rho^{\gamma_i}(x)\xi\to  \lambda^{\gamma}\circ\rho^{\gamma}(x)\xi$ in norm in $\ell^2(B)$. An $\epsilon/3$ argument shows that $\lambda^{\gamma_i}\circ\rho^{\gamma_i}(a)\to  \lambda^{\gamma}\circ\rho^{\gamma}(a)$ in the strong operator topology. The continuity of the norm topology on $\ell^2(B)$ implies that for each $r$,
\[
\{T\in \LL(\ell^2(B)):\|T\|\leq r\}=\{T:\|T\xi\|\leq r\text{ for all $\xi$ with $\|\xi\|=1$}\}
\]
is closed in the strong operator topology. Thus $\{\gamma:\|\lambda^{\gamma}\circ\rho^{\gamma}(a)\|\leq r\}$ is closed in $\widehat Z$ for every $r\in [0,\infty)$, which says precisely that $\gamma\mapsto\|\lambda^{\gamma}\circ\rho^{\gamma}(a)\|$ is lower semicontinuous.
\end{proof}

We can now state a more concrete version of Theorem~\ref{existbdle1}.

\begin{thm}\label{existbdle2}
Suppose that $p:B\to G$ is a Fell bundle over an amenable group $G$, $\sigma\in Z^2(G,Z)$ is a cocycle with values in an abelian group $Z$, and $q:H\to G$ a corresponding central extension of $G$ by $Z$. For each $\gamma\in \widehat Z$, let $\rho^\gamma$ be the surjection of $C^*(q^*B)$ onto $C^*(B(\gamma\circ\sigma))$ constructed in Lemma~\ref{propQ}, and for $a\in C^*(q^*B)$ define $\widehat a(\gamma):=\rho^\gamma(a)$. Then there is a continuous $C^*$-bundle $F$ over $\widehat Z$ with fibres $F_\gamma=C^*(B(\gamma\circ\sigma))$ such that $a\mapsto \widehat a$ is an isomorphism of $C^*(q^*B)$ onto $\Gamma(E)$.
\end{thm}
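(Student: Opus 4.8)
The plan is to combine the construction already carried out with the two semicontinuity facts about norm functions. Recall that Lemma~\ref{intocentre} and the discussion following it make $C^*(q^*B)$ a $C(\widehat Z)$-algebra via the central unital homomorphism $\pi_v\colon C^*(Z)\cong C(\widehat Z)\to ZM(C^*(q^*B))$, and that Fell's theorem, in the form used in the paragraph before Lemma~\ref{idfibres} (see \cite[Theorem~C.26]{tfb2}), then produces an upper-semicontinuous $C^*$-bundle over $\widehat Z$ whose section algebra is $C^*(q^*B)$, with fibre over $\gamma$ the quotient $C^*(q^*B)(\gamma)=C^*(q^*B)/I_\gamma$ and with each $a\in C^*(q^*B)$ corresponding to the section $\gamma\mapsto a+I_\gamma$. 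By Lemma~\ref{idfibres} the map $\rho^\gamma$ factors as $\phi_\gamma\circ Q_\gamma$, where $Q_\gamma$ is the quotient map onto $C^*(q^*B)(\gamma)$ and $\phi_\gamma\colon C^*(q^*B)(\gamma)\to C^*(B(\gamma\circ\sigma))$ is an isomorphism. Transporting the fibres along the $\phi_\gamma$ replaces this bundle by an upper-semicontinuous $C^*$-bundle $F$ over $\widehat Z$ with $F_\gamma=C^*(B(\gamma\circ\sigma))$, for which $a\mapsto\widehat a$, with $\widehat a(\gamma):=\rho^\gamma(a)$, is an isomorphism of $C^*(q^*B)$ onto $\Gamma(F)$; here one just uses that the reconstruction of a bundle from a $C_0(X)$-algebra is functorial, so that relabelling the fibres by the isomorphisms $\phi_\gamma$ carries the canonical section $\gamma\mapsto a+I_\gamma$ to $\gamma\mapsto\rho^\gamma(a)$.

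It then remains only to upgrade ``upper-semicontinuous'' to ``continuous''. For any upper-semicontinuous $C^*$-bundle and any section $a$, the function $\gamma\mapsto\|\widehat a(\gamma)\|$ is upper semicontinuous --- this is built into the definition (see \cite[Appendix~C]{tfb2}). On the other hand, the hypothesis that $G$ is amenable is exactly what allows us to invoke Lemma~\ref{lsc}, which says that for each $a\in C^*(q^*B)$ the function $\gamma\mapsto\|\rho^\gamma(a)\|=\|\widehat a(\gamma)\|$ is lower semicontinuous. Hence each such norm function is continuous on $\widehat Z$. A standard fact about $C_0(X)$-algebras --- that the associated upper-semicontinuous $C^*$-bundle is continuous precisely when all the functions $x\mapsto\|a(x)\|$ are continuous (see \cite[Appendix~C]{tfb2}) --- now shows that $F$ is a continuous $C^*$-bundle, which completes the proof. (Since Theorem~\ref{existbdle2} exhibits the fibres and sections explicitly, it contains Theorem~\ref{existbdle1}.)

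There is essentially no remaining obstacle: the substantive content has already been isolated in Lemmas~\ref{intocentre}, \ref{idfibres} and \ref{lsc}, and in particular it is Lemma~\ref{lsc} --- whose proof is where amenability is genuinely used, via Exel's theorem that Fell bundles over amenable groups are amenable --- that carries the weight. The only points needing a little care are bookkeeping ones: checking that transporting fibres along the $\phi_\gamma$ really does yield a bundle in which $\gamma\mapsto\rho^\gamma(a)$ is the image of $a$, and recalling the precise statement identifying continuity of a $C_0(X)$-bundle with continuity of all of its norm functions. Both are routine once one keeps in mind that $\rho^\gamma=\phi_\gamma\circ Q_\gamma$.
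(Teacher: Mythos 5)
Your proposal is correct and follows essentially the same route as the paper: both deduce upper semicontinuity of $\gamma\mapsto\|\rho^\gamma(a)\|$ from the identification $\rho^\gamma=\phi_\gamma\circ Q_\gamma$ of Lemma~\ref{idfibres} together with the upper-semicontinuous quotient bundle, obtain lower semicontinuity from Lemma~\ref{lsc} via amenability, and then invoke the results of \cite[Appendix~C]{tfb2} (Theorem~C.25 and Proposition~C.24 in the paper's version) to produce the continuous bundle $F$ and the isomorphism $a\mapsto\widehat a$. The only cosmetic difference is that the paper builds the topology on $F$ directly from the fields $\widehat a$ rather than first transporting the upper-semicontinuous structure and then upgrading, but the substance is identical.
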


\begin{proof}
As in the discussion preceding Lemma~\ref{propQ}, we know that there is an upper semicontinuous $C^*$-bundle $E$ with fibres $E_\gamma=C^*(q^*B)(\gamma)$ and an isomorphism $\phi:C^*(q^*B)\to \Gamma(E)$ such that $\phi(a)=a(\gamma):=a+I_\gamma$. We take $F:=\bigsqcup_{\gamma\in \widehat Z}C^*(B(\gamma\circ\sigma))$, define $p:F\to \widehat Z$ by taking $p\equiv \gamma$ on $F_\gamma=C^*(B(\gamma\circ\sigma))$, and consider the algebra
\[
\{\widehat a:\gamma\mapsto \rho^\gamma(a):a\in C^*(q^*B)\}
\]
of fields of $F$. Lemma~\ref{propQ} says that $\rho^\gamma$ induces an isomorphism of $C^*(q^*B)(\gamma)$ onto $C^*(B(\gamma\circ\sigma))$, and hence $\gamma\mapsto \|\rho^\gamma(a)\|=\|a(\gamma)\|$ is upper semicontinuous. Since $G$ is amenable, Lemma~\ref{lsc} implies that $\gamma\mapsto \|\rho^\gamma(a)\|$ is continuous. Now Theorem~C.25 of \cite{tfb2} implies that there is a unique topology on $F$ such that $p:F\to \widehat Z$ is a continuous $C^*$-bundle and such that the functions $\widehat a$ belong to $\Gamma(F)$. Proposition~C.24 of \cite{tfb2} implies that $a\mapsto \widehat a$ is an isomorphism of $C^*(q^*B)$ onto $\Gamma(F)$. 
\end{proof}

\begin{rmk}\label{KPSex}
When we apply Theorem~\ref{existbdle2} to the Fell bundle over $\Z^k$ associated to the degree functor on a $k$-graph $\Lambda$ and any cocycle $\sigma\in Z^2(\Z^k,Z)$, we recover the continuous bundle of \cite[Corollary~3.3]{KPS3} for the categorical cocycle $c_\sigma$ of \eqref{defcatcocycle}. This covers the examples in \cite[\S3]{KPS3}. 
\end{rmk}

\begin{rmk}\label{repgp}
For every discrete group $G$ there is a particularly interesting cocycle to which we can apply Theorem~\ref{existbdle2}. The cohomology group $H^2(G,\T)$ is a compact abelian group in a topology inherited from the topology of pointwise convergence on cocyles \cite[Corollary~1.3]{PR}. The dual group $H^2(G,\T)^\wedge$ is a discrete group, and there is a canonical central extension 
\[
0\longrightarrow H^2(G,\T)^\wedge\longrightarrow H\longrightarrow G\longrightarrow e
\]
such that the inflation map $\Inf:H^2(G,\T)\to H^2(H,\T)$ is the zero map. Moore called such groups $H$ \emph{splitting extensions} of $G$ \cite[\S II.3]{M}; he proved existence of the extension $H$ in \cite[\S III.2]{M}, and this is also proved as part of \cite[Corollary~1.3]{PR}. Applying Theorem~\ref{existbdle2} to this central extension gives a continuous $C^*$-bundle whose fibres are all the possible deformations $C^*(B(\sigma))$. When $B=G\times\C$ is the trivial Fell bundle, we obtain a continuous bundle over $H^2(G,\T)$ whose fibres are all the twisted group algebras $C^*(G,\sigma)$, as in \cite[\S1]{PR}.

When $G=\Z^k$, the map $A\mapsto [\sigma_A]$ induces an isomorphism of $\T^{k(k-1)/2}$ onto $H^2(\Z^k,\T)$, and we obtain continuous $C^*$-bundles over $\T^{k(k-1)/2}$. When $B=\Z^k\times\C$ is the trivial Fell bundle, we recover the bundle over $\T^{k(k-1)/2}$ whose fibres are the rotation algebras (as in \cite[Example~3.4]{KPS3}).
\end{rmk}

\appendix

\section{Coactions and Fell bundles}\label{app:Q}

Quigg proved in \cite{Q} that Fell bundles over a discrete group $G$ are essentially the same thing as coactions of $G$ on $C^*$-algebras. (We stress that it is crucial that $G$ is discrete: it has long been known that the $C^*$-algebras of Fell bundles over locally compact groups carry natural coactions, but also that not all coactions arise this way \cite[Examples~2.3(6)]{LPRS}.) Since the proofs in \cite{Q} use some fairly hard-core coaction techniques, we include a relatively elementary proof.

If $G$ is a discrete group, we write $u$ for the universal unitary representation of $G$ in its full group $C^*$-algebra $C^*(G)$. All tensor products of $C^*$-algebras that follow are the spatial tensor products studied, for example, in \cite[\S B.1]{tfb}.

Suppose first that $B$ is a Fell bundle over a discrete group $G$, and $\pi$ is the universal representation of $B$ in $C^*(B)$. Then $\delta_g(b):=\pi_g(b)\otimes u_g$ gives a representation $\{\delta_g\}$ of $B$ in $C^*(B)\otimes C^*(G)$, and hence there is a homomorphism $\delta:C^*(B)\to C^*(B)\otimes C^*(G)$ such that $\delta(b)=\pi_g(b)\otimes u_g$ for $b\in B_g$. Since ${\id}\otimes \delta_G(b\otimes u_g)=b\otimes u_g\otimes u_g$, $\delta$ satisfies the coaction identity $(\delta\otimes {\id})\circ\delta=({\id}\otimes \delta_G)\circ\delta$. For $b\in B_g$ we have
\[
b\otimes u_h=(b\otimes u_g)(1\otimes u_{g^{-1}h})=\delta(b)(1\otimes u_{g^{-1}h}),
\]
and hence $\delta$ is nondegenerate in the sense that $\{\delta(a)(1\otimes x):a\in C^*(B),\;x\in C^*(G)\}$ spans a dense subspace of $C^*(B)\otimes C^*(G)$.

Quigg's theorem is the following converse. (He assumed in  \cite[Theorem~3.8]{Q} that his coaction was normal, but that was only needed for the second part of his theorem, which identifies $A$ with the reduced algebra $C_r^*(B)$. There is also a version of this second part for maximal coactions which identifies $A$ with the full algebra $C^*(B)$ \cite[Proposition~4.2]{EKQ}.) 


\begin{thm}[Quigg]\label{Quiggs}
Suppose that $\delta$ is a nondegenerate coaction of a discrete group $G$ on a $C^*$-algebra $A$. For $g\in G$, define
\[
B_g:=\{a\in A:\delta(a)=a\otimes u_g\},
\]
set $B:=\bigsqcup B_g$, and define $p:B\to G$ by $p(b)=g$ for $b\in B_g$. Then with the norms and operations inherited from $A$, $p:B\to G$ is a Fell bundle, and $\bigcup_{g\in G}B_g$ spans a dense $*$-subalgebra of $A$.
\end{thm}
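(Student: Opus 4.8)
The plan is to observe that the Fell-bundle axioms for $p:B\to G$ are all inherited from $A$, and to extract the density statement from nondegeneracy of $\delta$ by applying one carefully chosen bounded map — the slice of $A\otimes C^*(G)$ by the canonical tracial state of $C^*(G)$.

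\textbf{Step 1: $p:B\to G$ is a Fell bundle.} Each $B_g$ is the kernel of the bounded map $a\mapsto\delta(a)-a\otimes u_g$, hence a closed subspace of $A$, and so a complex Banach space in the norm of $A$; thus $p:B\to G$ is a Banach bundle. Because $\delta$ is a $*$-homomorphism with $\delta_G(u_g)=u_g\otimes u_g$, for $b\in B_g$, $c\in B_h$ we have $\delta(bc)=\delta(b)\delta(c)=bc\otimes u_{gh}$ and $\delta(b^*)=\delta(b)^*=b^*\otimes u_{g^{-1}}$, so $B_gB_h\subseteq B_{gh}$ and $B_g^*\subseteq B_{g^{-1}}$ (hence $B_g^*=B_{g^{-1}}$). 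Then \eqref{FBa}--\eqref{FBc} and the norm estimates are immediate from the corresponding facts in $A$, $B_e$ is a $C^*$-subalgebra of $A$, and \eqref{FBd} holds because positivity of $b^*b$ in $A$ descends to the $C^*$-subalgebra $B_e$. This step is routine.

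\textbf{Step 2: projections onto the fibres.} Let $\tau$ be the canonical tracial state on $C^*(G)$, so $\tau(u_g)=\delta_{g,e}$, and for $g\in G$ let $\phi_g\in C^*(G)^*$ be $\phi_g(x)=\tau(xu_{g^{-1}})$. The slice map $\id_A\otimes\phi_g\colon A\otimes C^*(G)\to A$ is bounded, so $E_g:=(\id_A\otimes\phi_g)\circ\delta$ is a bounded linear map $A\to A$. Combining the standard compatibility $\delta\circ(\id_A\otimes\phi_g)=(\id_{A\otimes C^*(G)}\otimes\phi_g)\circ(\delta\otimes\id_{C^*(G)})$ of slice maps with homomorphisms, the coaction identity $(\delta\otimes\id)\circ\delta=(\id\otimes\delta_G)\circ\delta$, and the elementary identity (checked on each $u_h$) that $(\id_{C^*(G)}\otimes\phi_g)\circ\delta_G$ is the map $x\mapsto\phi_g(x)u_g$, one computes $\delta(E_g(a))=E_g(a)\otimes u_g$ for every $a\in A$; that is, $E_g$ maps $A$ into $B_g$.

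\textbf{Step 3: density.} Let $A_0:=\clsp\{B_g:g\in G\}$, a closed $*$-subalgebra of $A$ by Step 1. Since $G$ is discrete, $\lsp\{u_h:h\in G\}$ is dense in $C^*(G)$, so nondegeneracy of $\delta$ gives $A\otimes C^*(G)=\clsp\{\delta(a)(1\otimes u_h):a\in A,\ h\in G\}$. Apply the bounded map $\id_A\otimes\tau$: it is onto $A$ because $(\id_A\otimes\tau)(a\otimes 1_{C^*(G)})=a$, and, being bounded, its image lies in $\clsp\{(\id_A\otimes\tau)(\delta(a)(1\otimes u_h))\}$; but a one-line computation gives $(\id_A\otimes\tau)(\delta(a)(1\otimes u_h))=(\id_A\otimes\phi_{h^{-1}})(\delta(a))=E_{h^{-1}}(a)\in B_{h^{-1}}\subseteq A_0$. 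Hence $A=(\id_A\otimes\tau)(A\otimes C^*(G))\subseteq A_0$, so $A_0=A$, which is exactly the asserted density. I expect Step 3 to be the crux: the temptation is to try to prove that the ``Fourier series'' $\sum_g E_g(a)$ converges to $a$, which in general fails (just as for Fourier series of continuous functions), whereas all that is actually needed is that the coefficients $E_g(a)$ have dense span, and this drops out of nondegeneracy once one slices with $\tau$ --- the slice collapses each generator $\delta(a)(1\otimes u_h)$ of $A\otimes C^*(G)$ to the single fibre element $E_{h^{-1}}(a)$.
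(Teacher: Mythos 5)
Your proof is correct and follows essentially the same route as the paper's: the Fell-bundle axioms are inherited from $A$ (with positivity in $B_e$ via spectral permanence), the slice maps $E_g=({\id}\otimes\phi_g)\circ\delta$ are shown to land in $B_g$ using the coaction identity (your $\phi_g$ is exactly the paper's $\tau_g$), and density is extracted from nondegeneracy by slicing with the canonical trace. The only cosmetic difference is that the paper approximates $a\otimes 1$ by finite sums $\sum_g(1\otimes u_g^*)\delta(a_g)$ and then slices, whereas you phrase the identical step as an inclusion of images under the bounded map ${\id}\otimes\tau$.
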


\begin{proof}
Since $\delta$ is a homomorphism of $C^*$-algebras, it is continuous, and each $B_g$ is a closed subspace of $A$. Since $u$ is a homomorphism we have $B_gB_h\subset B_{gh}$ and $B_g^*=B_{g^{-1}}$, and the other properties in \eqref{FBb} and~\eqref{FBc} follow from the corresponding properties of $A$. Property~\eqref{FBd} follows from spectral permanence: because $B_e$ is a $C^*$-subalgebra of $A$,
\[
b^*b\geq 0\text{ in }A\Longrightarrow \sigma_A(b^*b)\subset [0,\infty)
\Longrightarrow \sigma_{B_e}(b^*b)\subset [0,\infty)\Longrightarrow 
b^*b\geq 0\text{ in }B_e.
\]

Before proving the assertion about density, we review some properties of coactions. For $g\in G$, we write $\tau_g$ for the bounded linear functional on $C^*(G)$ given in terms of the usual basis $\{\xi_g\}$ for the left-regular representation by $\tau_g(x)=(\lambda(x)\xi_e\,|\,\xi_g)$, and note that $\tau_g(x)=\tau_e(u_g^*x)$. Each $\tau_g$ induces a bounded linear map ${\id}\otimes{\tau_g}:A\otimes C^*(G)\to A$ such that ${\id}\otimes\tau_g(a\otimes x)=\tau_g(x)a$ (in the coaction literature, this is the slice map $S_{\tau_g}$ discussed in \cite[\S A.4]{EKQR}, for example). 

We next claim that $({\id}\otimes\tau_g)(\delta(a))$ belongs to $B_g$ for every $a\in A$. We have $\tau_g(u_h)=0$ for $h\not =g$ and $\tau_g(u_g)=1$. Thus for $x=\sum_h{c_h}u_h$ we have
\[
({\id}\otimes{\tau_g})(\delta_G(x))=({\id}\otimes {\tau_g})\Big(\sum_{h}c_h(u_h\otimes u_h)\Big)=c_gu_g=\tau_g(x)u_g,
\]
and this extends to $x\in C^*(G)$ by continuity. Thus
\[
({\id}\otimes{\id}\otimes{\tau_g})\circ({\id}\otimes{\delta_G})(a\otimes x) =a\otimes \tau_g(x)u_g=({\id}\otimes\tau_g)(a\otimes x)\otimes u_g,
\]
and this extends to elements of $A\otimes C^*(G)$ by linearity and continuity. Thus for $a\in A$, the coaction identity gives
\begin{align*}
\delta(({\id}\otimes{\tau_g})(\delta(a)))&=({\id}\otimes{\id}\otimes {\tau_g})\circ({\delta}\otimes{\id})(\delta(a))\\
&=({\id}\otimes{\id}\otimes {\tau_g})\circ({\id}\otimes{\delta_G})(\delta(a))\\
&=({\id}\otimes\tau_g)(\delta(a))\otimes u_g,
\end{align*}
which says that $({\id}\otimes\tau_g)(\delta(a))$ belongs to $B_g$, as claimed.

To prove density, we let $a\in A$. By nondegeneracy of $\delta$, we can approximate $a\otimes 1$ in $A\otimes C^*(G)$ by a finite sum $\sum_g (1\otimes u_{g}^*)\delta(a_g)$. Then we have
\[
a={\id}\otimes\tau_e(a\otimes 1)\sim\sum_g{\id}\otimes\tau_e((1\otimes u_{g}^*)\delta(a_g))
=\sum_g{\id}\otimes\tau_g(\delta(a_g)),
\]
which belongs to $\lsp(\bigcup_g B_g)$.
\end{proof}

\section{Fell bundles as gradings}\label{app:grade}

Suppose that $G$ is a group. A ring $R$ is \emph{$G$-graded} if there are additive subgroups $R_g$ such that $R_gR_h\subset R_{gh}$ and ``$R$ is the direct sum of the $R_g$'', which means that each $r$ has a unique expansion as a finite sum $r=\sum_{g\in F}r_g$ with $r_g\in R_g$. We then call the $r_g$ the \emph{homogeneous components} of $r$. 

We suggest, taking the idea of Exel \cite{E} further, that Fell bundles are the right analogue of gradings for $C^*$-algebras.  We think Quigg's theorem provides good support for this view, and allows an alternative description in terms of coactions. The only catch is that the axioms of a Fell bundle do not look like a definition of a grading because there is no mention of decompositions into homogeneous components. For any Fell bundle $p:B\to G$, we can find candidates for the homogeneous components of elements of $C^*(B)$ using the dual coaction $\delta$ of $G$ (using notation from the proof of Theorem~\ref{Quiggs}, we take $b_g:=({\id}\otimes\tau_g)(\delta(b))$\,), but  it is not obvious how to recover an element from its components. Obviously one would like it to be as some kind of infinite sum, but there is no reason to expect this series to converge in the norm of $C^*(B)$ or $C^*_r(B)$. To see the difficulties, consider the group $C^*$-algebra $C^*(\Z)$, which is the $C^*$-algebra of the trivial Fell bundle $\C\times \Z$ over $\Z$. The Fourier transform gives an isomorphism of $C^*(\Z)$ onto $C(\T)$, and then the homogeneous components of a function $f\in C(\T)$ are the functions $z\mapsto \hat f(n)z^n$, where $\hat f(n)$ is the $n$th Fourier coefficient of $f$. Recovering a continuous function $f$ as the sum of its Fourier series is delicate analytically: to get uniform convergence of the Fourier series (that is, convergence in the norm of $C(\T)$\,), we need to know that the function $f$ is smooth. So it seems unlikely that we can achieve recovery for arbitrary groups.

In this appendix, we show how the connection with Fourier series can offer us a way to recover an element from its homogeneous components. Many $C^*$-algebras, including graph algebras, carry natural gauge actions of $\T^k$, and these actions traditionally play the role of the gradings on their algebraic counterparts, such as Leavitt path algebras. For functions on tori, Fourier series have some especially nice properties, and in particular we can accelerate their convergence using summability methods. A classical theorem of Fej{\'e}r, for example, asserts that for every continuous function $f$ on $\T$, the C{\'e}saro means of the Fourier series converge uniformly to $f$. 

Suppose that $\alpha$ is a continuous action of the torus $\T^k$ on a $C^*$-algebra $A$. For $a\in A$ and $n\in \Z^k$, we define the\footnote{There are some identifications going on here. When $\alpha$ is the dual action of $\T^k$ on the $C^*$-algebra of a Fell bundle over $\Z^k$, we are implicitly using the isomorphism
\[
A\otimes C^*(\Z^k)\to A\otimes C((\Z^k)^\wedge)=A\otimes C(\T^k)\to C(\T^k,A)
\]
to pull the coaction $\delta$ of $\Z^k$ over to an action of $\T^k$. Under this isomorphism, the slice map ${\id}\otimes \tau_e$ becomes integration against Haar measure, and $a_n$  is the homogeneous component $({\id}\otimes\tau_n)(\delta(a))$ described a few paragraphs ago.} \emph{$n$th Fourier coefficient} of $a$ by
\[
a_n:=\int_{\T^k} z^{-n}\alpha_z(a)\,dz,
\]
where the integral is with respect to the Haar measure on $\T^k$ and is interpreted as in \cite[Lemma~C.3]{tfb}, for example. Since bounded linear maps pull through this integral, it follows from the translation invariance of Haar measure that $a_n$ belongs to the spectral subspace
\[
A_n:=\{b\in A:\alpha_z(b)=z^n\alpha_z(b)\text{ for $z\in \T^k$}\}.
\]
With all the stucture inherited from $A$, $B:=\bigsqcup_{n\in \N^k}A_n$ is a Fell bundle over $\Z^k$, and, as in the discussion preceding Lemma~\ref{matrixunits}, the inclusion maps induce an isomorphism of $C^*(B)$ onto $A$.

Previous authors have studied the relationship between elements of group algebras and crossed products and their Fourier cofficients (see \cite{BC} for a recent survey). But we have not seen the following $C^*$-algebraic version of Fej{\'e}r's theorem.

\begin{prop}
Suppose that $\alpha$ is a continuous action of the torus $\T^k$ on a $C^*$-algebra $A$. For $a\in A$, we set
\begin{align*}
s_n(a)&:=\sum_{-n\leq j \leq n}a_j\text{ for $n\in \N^k$, and }\\
 \sigma_N(a)&:=\frac{1}{\textstyle{\prod_{j=1}^k(N_j+1)}}\sum_{0\leq n\leq N} s_n(a)\text{ for $N\in \N^k$.}
\end{align*}
Then $\|\sigma_N(a)-a\|\to 0$ as $N\to \infty$ in $\N^k$.
\end{prop}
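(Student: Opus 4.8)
The plan is to reduce the $C^*$-algebraic statement to the classical Fej\'er theorem on $\T^k$ by a standard convolution argument. First I would express the C\'esaro mean $\sigma_N(a)$ as an integral of $\alpha_z(a)$ against the $k$-dimensional Fej\'er kernel. Indeed, each Fourier coefficient is $a_j=\int_{\T^k}z^{-j}\alpha_z(a)\,dz$, so $s_n(a)=\int_{\T^k}D_n(z)\alpha_z(a)\,dz$ where $D_n$ is the Dirichlet kernel $\sum_{-n\le j\le n}z^j$, and averaging over $0\le n\le N$ gives
\[
\sigma_N(a)=\int_{\T^k}K_N(z)\alpha_z(a)\,dz,
\]
where $K_N(z)=\prod_{i=1}^k K_{N_i}(z_i)$ is the product of the one-dimensional Fej\'er kernels $K_{N_i}$. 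Here I am using that bounded linear maps, and in particular the functionals used to define the $C^*$-algebra-valued integral of \cite[Lemma~C.3]{tfb}, pull through the integral, together with Fubini for iterated integrals against Haar measure; this is routine.

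The key properties of the Fej\'er kernel I would invoke are the classical ones: $K_N\ge 0$, $\int_{\T^k}K_N(z)\,dz=1$, and for every neighbourhood $U$ of $1$ in $\T^k$ we have $\int_{\T^k\setminus U}K_N(z)\,dz\to0$ as $N\to\infty$; these follow from the one-variable facts by taking products. Then the estimate is the usual approximate-identity argument: write
\[
\sigma_N(a)-a=\int_{\T^k}K_N(z)\big(\alpha_z(a)-a\big)\,dz,
\]
using $\int K_N=1$, so that
\[
\|\sigma_N(a)-a\|\le\int_{\T^k}K_N(z)\,\|\alpha_z(a)-a\|\,dz.
\]
Given $\epsilon>0$, continuity of the action $\alpha$ supplies a neighbourhood $U$ of $1$ with $\|\alpha_z(a)-a\|<\epsilon$ for $z\in U$; split the integral over $U$ and its complement, bound the first piece by $\epsilon$ and the second by $2\|a\|\int_{\T^k\setminus U}K_N(z)\,dz$, which is less than $\epsilon$ for $N$ large. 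Hence $\|\sigma_N(a)-a\|<2\epsilon$ eventually, giving the claim.

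I do not expect a serious obstacle here; the only points needing a little care are (i) justifying the manipulation of the $A$-valued integrals — commuting the finite sums over $n$ and $j$ with the integral, and recognising the resulting scalar coefficient of $\alpha_z(a)$ as $K_N(z)$ — which is handled by the linearity and continuity properties of the integral in \cite[Lemma~C.3]{tfb} and an appeal to Fubini, and (ii) the passage from one variable to $k$ variables, where one uses that the $k$-dimensional Fej\'er kernel factors as a product and that a product of approximate identities is an approximate identity. Everything else is the textbook proof of Fej\'er's theorem with $\|\alpha_z(a)-a\|$ in place of the modulus of continuity of a function.
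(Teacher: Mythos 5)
Your proposal is correct and follows essentially the same route as the paper: both express $\sigma_N(a)$ as the integral of $\alpha_z(a)$ against the product of one-dimensional Fej\'er kernels and then run the standard approximate-identity estimate using positivity, total mass one, and concentration of the kernel near the identity, together with continuity of $z\mapsto\alpha_z(a)$. The only differences are cosmetic (the paper parametrises $\T^k$ by $[-\pi,\pi]^k$ and quotes the explicit formula for $F_{n_j}$ from Zygmund rather than deriving the kernel identity from the Dirichlet kernels).
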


\begin{proof}
As in the classical proof of Fej{\'e}r's theorem, we can describe the C{\'e}saro means $\sigma_N(a)$ in terms of the Fej{\'e}r kernel $F_N:[-\pi,\pi]^k\to \R$ as
\[
\sigma_N(a)=\int_{[-\pi,\pi]^k} F_N(t) \alpha_{e^{it}}(a)\,dt;
\]
for the rectangular partial sums we are using, $F_N(t)$ is the product $\prod_{j=1}^k F_{n_j}(t_j)$ of the one-dimensional Fej{\'e}r kernels
\[
F_{n_j}(t_j)=\frac{2}{n_j+1}\Big(\frac{\sin ((n_j+1)t_j/2)}{\sin(t_j/2)}\Big)^2
\]
(see \cite[vol.~II, page~303, and vol. I, page 88]{Z}\footnote{We thank Misi Kovacs for pointing us towards the last chapter of \cite{Z}.}). Thus for fixed $\delta>0$, we have $F_N(t)\to 0$ uniformly on $[-\pi,\pi]^k\backslash (-\delta,\delta)^k$ as $N\to \infty$ in $\N^k$, and $\int_{[-\pi,\pi]^k}F_N(t)\,dt=1$ for all $N$. For each $\delta>0$ we have
\begin{align*}
\|\sigma_N(a)-a\|&=\Big\|\int_{[-\pi,\pi]^k}F_N(t)\alpha_{e^{it}}(a)\,dt-\int_{[-\pi,\pi]^k}F_N(t)a\,dt\Big\|\\
&\leq \int_{(-\delta,\delta)^k}F_N(t)\|\alpha_{e^{it}}(a)-a\|\,dt +2\|a\|\int_{[-\pi,\pi]^k\backslash (-\delta,\delta)^k}F_N(t)\,dt;
\end{align*}
we can then make the first integral small by making $\delta$ small using the continuity of $t\mapsto \alpha_{e^{it}}(a)$, and the second small for this fixed $\delta$ by taking $N$ large. Thus $\|\sigma_N(a)-a\|\to 0$, as required.
\end{proof}

\section{Gradings of Kumjian--Pask algebras}\label{app:KP}

We can use the ideas of the previous section to give a direct argument which derives the $\Z^k$-grading on a Kumjian--Pask algebra $\KP_K(\Lambda)$ from the universal property. The existence of the grading has previously been deduced (at the foot of page~3620 in \cite{APCaHR}) from the construction of $\KP_K(\Lambda)$ as a quotient of a free algebra, as was previously done for Leavitt path algebras in \cite[Lemma~1.7]{AAP}. Here we deduce existence by applying the following proposition to the degree functor. Proposition~\ref{gradeKP} would also apply to the functors used in the theory of graph coverings in \cite{PQR}.

\begin{prop}\label{gradeKP}
Suppose $\Lambda$ is a row-finite $k$-graph with no sources, that $K$ is a field, and $\eta:\Lambda\to G$ is a functor with values in a group $G$. Then there is a $G$-grading of the Kumjian--Pask algebra with
\[
\KP_K(\Lambda)_g:=\lsp\{s_\lambda s_{\mu^*}:\eta(\lambda)\eta(\mu)^{-1}=g\}.
\]
\end{prop}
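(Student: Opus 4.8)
The plan is to mimic the argument from Appendix~\ref{app:Q} and the discussion in Appendix~\ref{app:grade}, but in the purely algebraic setting of the Kumjian--Pask algebra. The key point is that $\KP_K(\Lambda)$ is generated by a Kumjian--Pask $\Lambda$-family $\{s_\lambda, s_{\mu^*}\}$ subject to relations that are all ``$\eta$-homogeneous'', so one should be able to read off the grading from the universal property by producing, for each $g$, a projection onto the putative component $\KP_K(\Lambda)_g$.

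First I would recall the universal description of $\KP_K(\Lambda)$: it is generated by a family $\{s_v, s_\lambda, s_{\mu^*}\}$ satisfying the Kumjian--Pask relations (KP1)--(KP4) of \cite{APCaHR}, and every such family in a $K$-algebra $A$ induces a homomorphism $\KP_K(\Lambda)\to A$. The strategy is to build an automorphism-style action of $G$ that detects the grading. Concretely, for each character-like datum I want a family indexed by $G$: I would consider the group algebra $K[G]$ (with canonical basis $\{u_g\}$) and define $\widetilde s_\lambda := s_\lambda \otimes u_{\eta(\lambda)}$, $\widetilde s_{\mu^*} := s_{\mu^*}\otimes u_{\eta(\mu)^{-1}}$, $\widetilde s_v := s_v\otimes u_e$ inside $\KP_K(\Lambda)\otimes_K K[G]$. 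One checks that this new family satisfies (KP1)--(KP4): relations (KP1) and (KP3) live in degree $e$, relation (KP2) holds because $\eta(\lambda\mu)=\eta(\lambda)\eta(\mu)$, and relation (KP4) holds because $\eta(\lambda)\eta(\lambda)^{-1}=e$ for every $\lambda\in v\Lambda^n$, so the $u$-factors all coincide across the sum. Hence there is a homomorphism $\delta:\KP_K(\Lambda)\to \KP_K(\Lambda)\otimes_K K[G]$ with $\delta(s_\lambda s_{\mu^*}) = s_\lambda s_{\mu^*}\otimes u_{\eta(\lambda)\eta(\mu)^{-1}}$.

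Next I would use $\delta$ to define the homogeneous components. Let $\tau_g:K[G]\to K$ be the coordinate functional dual to $u_g$, and set $P_g := (\id\otimes \tau_g)\circ\delta : \KP_K(\Lambda)\to \KP_K(\Lambda)$. Since the spanning elements $s_\lambda s_{\mu^*}$ are eigenvectors for $\delta$, each $P_g$ is the projection onto $\KP_K(\Lambda)_g := \lsp\{s_\lambda s_{\mu^*}:\eta(\lambda)\eta(\mu)^{-1}=g\}$ along the other components, and one verifies directly from the definitions that $\sum_{g} P_g = \id$ (a finite sum on each element, since any $a\in \KP_K(\Lambda)$ is a finite $K$-combination of elements $s_\lambda s_{\mu^*}$), that $P_g P_h = \delta_{g,h}P_g$, and hence that $\KP_K(\Lambda) = \bigoplus_{g\in G} \KP_K(\Lambda)_g$ as $K$-modules. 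The inclusion $\KP_K(\Lambda)_g \KP_K(\Lambda)_h \subset \KP_K(\Lambda)_{gh}$ follows from the computation $(s_\lambda s_{\mu^*})(s_\nu s_{\tau^*}) = \sum_{(\alpha,\beta)\in\Lambda^{\min}(\mu,\nu)} s_{\lambda\alpha}s_{(\tau\beta)^*}$ together with the identity $\eta(\mu)\eta(\alpha)=\eta(\nu)\eta(\beta)$, exactly as in the proof of Proposition~\ref{labeltoFell}; and $\KP_K(\Lambda)_g^* \subset \KP_K(\Lambda)_{g^{-1}}$ is immediate.

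The main obstacle is really just bookkeeping rather than a genuine difficulty: one must be careful that the family $\{\widetilde s_\lambda,\widetilde s_{\mu^*},\widetilde s_v\}$ genuinely satisfies all of (KP1)--(KP4), paying particular attention to relation (KP4), where a sum of terms $s_\lambda s_{\lambda^*}$ over $\lambda\in v\Lambda^n$ must all land in degree $e$ so that the common factor $u_e$ can be pulled out and the relation inherited from $\KP_K(\Lambda)$. One also needs the elementary fact that $\{s_\lambda s_{\mu^*}:\lambda,\mu\in\Lambda, s(\lambda)=s(\mu)\}$ spans $\KP_K(\Lambda)$ as a $K$-module, which is part of the standard structure theory of Kumjian--Pask algebras in \cite{APCaHR} and may be quoted. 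With these in hand, the argument is complete; it is the purely algebraic shadow of Quigg's theorem and requires no analysis.
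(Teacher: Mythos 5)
Your proposal is correct and takes essentially the same route as the paper's proof: the paper also shows that $\{s_\lambda\otimes\eta(\lambda)\}$ is a Kumjian--Pask family in $\KP_K(\Lambda)\otimes_K KG$, obtains the induced homomorphism $\pi_{s\otimes\eta}$ from the universal property, and applies the slice maps ${\id}\otimes\tau_g$ to establish independence of the components, with the multiplicativity $\KP_K(\Lambda)_g\KP_K(\Lambda)_h\subset\KP_K(\Lambda)_{gh}$ coming from the same $\Lambda^{\min}$ computation as in Proposition~\ref{labeltoFell}.
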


\begin{proof}
Suppose $s_\lambda s_{\mu^*}\in \KP_K(\Lambda)_g$ and $s_\sigma s_{\tau^*}\in \KP_K(\Lambda)_h$. Then as in the proof of Proposition~\ref{labeltoFell},
\[
(s_\lambda s_{\mu^*})(s_\sigma s_{\tau^*})=\sum_{(\alpha,\beta)\in \Lambda^{\min}(\mu,\sigma)}s_{\lambda\alpha}s_{(\tau\beta)^*};
\]
belongs to $B_{gh}$ because $\eta$ is a functor. It remains for us to show that the subspaces $\{B_g:g\in G\}$ give a direct-sum decomposition of the additive group of $\KP_K(\Lambda)$. They certainly span $\KP_K(\Lambda)$, so we need to see that they are independent. To do this, we borrow an argument from the theory of coactions.

Let $KG:=\{\sum_{g\in G} c_g g: c_g\in K\}$ be the group algebra of $G$. Then the elements $\{g:g\in G\}$ are a basis for $KG$, and hence there is a linear transformation $\tau:KG\to K$ such that $\tau(g)=0$ if $g\not=e$ and $\tau(e)=1$. For each $g\in G$, we define $\tau_g(y)=\tau(g^{-1}y)$. The elements $s_\lambda\otimes \eta(\lambda)$ form a Kumjian--Pask family in $\KP_K(\Lambda)\otimes_K KG$, and hence there is a homomorphism $\pi_{s\otimes\eta}:\KP_K(\Lambda)\to \KP_K(\Lambda)\otimes_K KG$ such that $\pi_{s\otimes\eta}(s_\lambda)=s_\lambda\otimes \eta(\lambda)$.  Since $(x\otimes c)\mapsto x\tau_g(c)$ is bilinear from $\KP_K(\Lambda)\times KG$ to $\KP_K(\Lambda)$, there is a well-defined linear map ${\id}\otimes\tau_g$ from $\KP_K(\Lambda)\otimes_K KG$ to $\KP_K(\Lambda)$ such that ${\id}\otimes\tau_g(x\otimes h)=x\tau_g(h)$. Now a straightforward calculation shows that
\[
({\id}\otimes \tau_g)\circ\pi_{ s\otimes \eta}(s_{\lambda}s_{\mu^*})
=\begin{cases}
s_{\lambda}s_{\mu^*}&\text{if $\eta(\lambda)\eta(\mu)^{-1}=g$}\\
0&\text{otherwise.}
\end{cases}
\]
Now if $\sum_{h\in F}b_h$ is a finite sum  with $b_h\in B_h$ and $\sum_{h\in F}b_h=0$ in $\KP_K(\Lambda)$, we have 
\[
b_g=({\id}\otimes \tau_g)\circ\pi_{ s\otimes \eta}\Big(\sum_{h\in F}b_h\Big)=0 \text{ for all $g\in F$.}
\]
Thus the spaces $B_g$ are independent.
\end{proof}

\end{document}